\newcommand{\comments}[1]{}
\newenvironment{customthm}[1]
  {\innercustomthm}
  {\endinnercustomthm}
  \newtheorem{lem}{Lemma}
\newtheorem{defin}{Definition}[section]
\begin{document}

\title[Linear theory for filtering nonlinear multiscale systems with model error]{Linear theory for filtering nonlinear multiscale systems with model error}

\author[Berry and Harlim]{Tyrus Berry$^1$ and John Harlim$^{1,2}$}
\affiliation{$^1$Department of Mathematics and $^2$Department of Meteorology, The Pennsylvania State University, University Park, PA, 16802., U.S.A.}
\label{firstpage}

\maketitle

\begin{abstract} {data assimilation; filtering; multi-scale systems; covariance inflation; stochastic parameterization; uncertainty quantification; model error; averaging; parameter estimation} 

In this paper, we study filtering of multiscale dynamical systems with model error arising from limitations in resolving the smaller scale processes. In particular, the analysis assumes the availability of continuous-time noisy observations of all components of the slow variables. Mathematically, this paper presents new results on higher-order asymptotic expansion of the first two moments of a conditional measure. In particular, we are interested in the application of filtering multiscale problems in which the conditional distribution is defined over the slow variables, given noisy observation of the slow variables alone.

From the mathematical analysis, we learn that for a continuous time linear model with Gaussian noise, there exists a unique choice of parameters in a linear reduced model for the slow variables which gives the optimal filtering when only the slow variables are observed.  Moreover, these parameters simultaneously give the optimal equilibrium statistical estimates of the underlying system, and as a consequence they can be estimated offline from the equilibrium statistics of the true signal.  By examining a nonlinear test model, we show that the linear theory extends in this non-Gaussian, nonlinear configuration as long as we know the optimal stochastic parameterization and the correct observation model.  However, when the stochastic parameterization model is inappropriate, parameters chosen for good filter performance may give poor equilibrium statistical estimates and vice versa; this finding is based on analytical and numerical results on our nonlinear test model and the two-layer Lorenz-96 model. Finally, even when the correct stochastic ansatz is given, it is imperative to estimate the parameters simultaneously and to account for the nonlinear feedback of the stochastic parameters into the reduced filter estimates. In numerical experiments on the two-layer Lorenz-96 model, we find that the parameters estimated \emph{online}, as part of a filtering procedure, simultaneously produce accurate filtering and equilibrium statistical prediction.  In contrast, an offline estimation technique based on a linear regression, which fits the parameters to a training data set without using the filter, yields filter estimates which are worse than the observations or even divergent when the slow variables are not fully observed. This finding does not imply that all offline methods are inherently inferior to the online method for nonlinear estimation problems, it only suggests that an ideal estimation technique should estimate all parameters simultaneously whether it is online or offline. 
\end{abstract}

\comments{ 
We study filtering of multiscale dynamical systems with model error arising from unresolved smaller scale processes. The analysis assumes continuous-time noisy observations of all components of the slow variables alone. For a linear model with Gaussian noise, we prove existence of a unique choice of parameters in a linear reduced model for the slow variables.  The linear theory extends to to a non-Gaussian, nonlinear test problem, where we assume we know the optimal stochastic parameterization and the correct observation model.  We show that when the parameterization is inappropriate, parameters chosen for good filter performance may give poor equilibrium statistical estimates and vice versa. Given the correct parameterization, it is imperative to estimate the parameters simultaneously and to account for the nonlinear feedback of the stochastic parameters into the reduced filter estimates. In numerical experiments on the two-layer Lorenz-96 model, we find that parameters estimated online, as part of a filtering procedure, produce accurate filtering and equilibrium statistical prediction.  In contrast, a linear regression based offline method, which fits the parameters to a given training data set without using the filter, yields filter estimates which are worse than the observations or even divergent when the slow variables are not fully observed.
}

\maketitle

\section{Introduction}
Model error is a fundamental barrier to state estimation (or filtering). This problem is attributed to incomplete understanding of the underlying physics and our lack of computational resources to resolve physical processes at various time and length scales. While many numerical approaches have been developed to cope with state estimation in the presence of model errors, most of these methods were designed to estimate only one of the model error statistics, either the mean or covariance, while imposing various assumptions on the other statistics which are not estimated. For example, classical approaches proposed in \cite{dds:98,baek:06} estimate mean model error (which is also known as the forecast bias), assuming that the model error covariance (or the random part) is proportional to the prior error covariance from the imperfect model. Popular approaches are to inflate the prior error covariance estimate with an empirically chosen inflation factor \cite{aa:99,ott:04,scyangetal:06,kalnayetal:07,whwst:08} or with an adaptive inflation factor \cite{mehra:70,mehra:72,belanger:74,dcdg:85,anderson:07,lkm:09,miyoshi:11,mh:13,bs:13,hmm:14}. All of these covariance inflation methods assume unbiased forecast error (meaning that there is no mean model error). Recently, reduced stochastic filtering approaches to mitigate model errors in multiscale complex turbulent systems were introduced in \cite{hm:08a,gm:08a,gm:10,ghm:10a,ghm:10b}; see also \cite{mhg:10,mh:12} for a complete treatment of filtering complex turbulent systems. While many of these computationally cheap methods produce relatively accurate mean estimates, the offline based methods such as the mean stochastic model (MSM) \cite{mgy:10,mh:12} tend to underestimate the error covariance statistics that characterizes the uncertainty of the mean estimate in the nonlinear setting. Similar conclusions were also reported in a comparison study of various approximate filtering methods \cite{ls:12}. There are only handful of numerical results, which suggest that an appropriate stochastic parameterization can improve the filtered covariance estimates at short time \cite{mg:12}. Many studies also show that when the stochastic parameters in the filter are obtained by online fitting as part of the data assimilation scheme \cite{gm:10,ghm:10a,ghm:10b,kh:12,mh:13,bs:13,hmm:14}, both the filter mean and covariance estimates become more accurate. These results suggest that one should treat model error as a stochastic process, rather than estimating model error statistics (the bias term and the random component) separately, as is done in many of the empirical approaches mentioned above. 

Independent from the data assimilation context, there is a vast literature in modeling unresolved scale processes with stochastic parameterizations \cite{mtv:01,fev:04,wilks:05,cev:08,mfc:09,amp:13,mh:13,hmm:14}. In principle, these approaches were designed to address the predictability of the equilibrium statistics, with climate modeling as a natural application. We should point out that not only are the forms of the stochastic parameterizations of these methods different, their stochastic parameters are determined by various offline/online data fitting methods. In particular, the approach in \cite{mh:13,hmm:14} determines the stochastic parameters by fitting the data online with a data assimilation scheme. In \cite{hmm:14}, it was shown that it is necessary to use a stochastic parameterization model with at least a one-lag memory to obtain reasonably accurate equilibrium statistical prediction of a highly skewed, non-Gaussian distributed dynamical system. When a memory-less stochastic parameterization is used, the equilibrium statistical prediction for the skewness is constrained to zero even when the true equilibrium distribution is highly skewed. However, the trajectory of the filtered state estimates for the observed variables are comparable and they are relatively accurate, regardless of whether the stochastic parameterization with no-lag or one-lag memory is used. This result suggests that a good reduced stochastic model for filtering may not necessarily be a good model for predicting equilibrium statistics. Here, we will show that the converse is also true when the form of the stochastic parameterization is not chosen appropriately.  

In this paper, we examine the role of the form of the stochastic parameterization and the method of parameter estimation. This issue is closely tied to the above hypothesis which suggests treating model error as a stochastic process in a filtering problem rather than estimating the bias and random components separately, as is typically done in practice. In particular, we want to address the following questions:
\begin{enumerate}
\item Is it possible to have a stochastic parameterization that will produce, simultaneously, optimal filtering and equilibrium statistical prediction in the presence of model error? If so, when can we expect this hypothesis to prevail? 
\item Why is it difficult to find such a stochastic parameterization in practical applications? In particular, what could happen when the appropriate stochastic parameterization ansatz is not available to us?
\item If we have an appropriate stochastic parameterization ansatz, how should we fit the parameters? We will compare the filtering and equilibrium statistical predictive skills of an online parameter estimation scheme with those of a standard linear regression based offline parameter estimation method. By online, we mean parameters are estimated as part of the filtering procedure and by offline, we mean independent of the filter.
\end{enumerate}
To answer the first question, we develop a linear theory for optimal filtering of multiscale dynamical systems with model error arising from limitations in resolving the smaller scale processes. By optimality, we mean the expected state estimate and the error covariance matrix are as accurate as the true posterior estimates obtained with the perfect model. Ideally, we would like to have accurate estimates of all higher-order moments, but due to practical considerations we only discuss the accuracy of the first two moments which are already difficult to obtain beyond the linear and Gaussian setting. Note that this optimality condition is only a minimum requirement for accurate uncertainty quantification. In order to make a rigorous investigation of state estimation in the presence of model error, we consider the following prototype continuous-time filtering problem,
\begin{align}\label{genFullModel}
dx &= f_1(x,y;\theta) dt + \sigma_x(x,y;\theta) \,dW_x, \nonumber \\ 
dy &= \frac{1}{\epsilon}f_2(x,y;\theta) dt + \frac{\sigma_y(x,y;\theta)}{\sqrt{\epsilon}} \,dW_y, \\
dz &= x\,dt + \sqrt{R} dV, \quad R>0.\nonumber
\end{align}
Intuitively, the variable $x$ represents the slow component of the state which we wish to estimate and predict, while the variable $y$ which represents the fast component (characterized by small $\epsilon$) is either unknown or impractical to estimate.  In \eqref{genFullModel}, $W_x, W_y,$ and $V$ are i.i.d. Wiener processes and $\theta$ denotes the true model parameters, which may be partially unknown in real applications. The mathematical analysis in this paper assumes:
\begin{description}
\item{I)} Full observations of only the resolved variables $x$, contaminated by unbiased noise with a positive definite covariance matrix, $R$. For general observation models that involve both the $x$ and $y$ variables, such as those considered in \cite{hm:08a,gm:08a,gm:10}, we recommend that the reader consult the information criteria for optimality of the filtered solutions \cite{bm:14}. While their strategy is more general, our analysis (at least in this simpler context) provides convergence estimates for both the mean and covariance statistics.
\item{II)} The model for the fast unresolved scales in \eqref{genFullModel} are known in order to find the reduced model analytically. In the linear case, we will also discuss how to obtain the reduced model when the fast dynamics in \eqref{genFullModel} are unknown. To make the analysis tractable, our results assume the filtered solutions based on the full model are stable.
\end{description}
While there are many results concerning the convergence of \eqref{genFullModel} as $\epsilon \to 0$ to an averaged reduced filter for $x$ (such as \cite{imkeller:13}, which also developed a nonlinear theory), we are interested in the case where $\epsilon$ may be $\mathcal{O}(10^{-1})$ or even $\mathcal{O}(1)$ and we want to understand the structure of the averaged operators $F(X;\Theta)$ and $\sigma_X(X;\Theta)$ corresponding to the reduced filtering problem,
\begin{align}\label{genReducedModel}
dX &= F(X;\Theta)\, dt + \sigma_{X}(X;\Theta) \,dW_X,\\
dz &= X\,dt + \sqrt{R} dV, \quad R>0.\nonumber
\end{align}
Ultimately, we would like to find $\Theta$ such that the mean and covariance estimates of the reduced filtering problem in \eqref{genReducedModel} are close to the mean and covariance estimates of the true filtering problem with the perfect model in \eqref{genFullModel}. In this reduced filtering problem, the observations $z$ in \eqref{genReducedModel} are noisy observations of the solutions of the true model in \eqref{genFullModel}. We assume that there are no errors in the observation model of the reduced filtering problem, which will allow direct comparison of the filtered estimates from \eqref{genFullModel} and \eqref{genReducedModel}. The parameters $\Theta$ will depend on the scale gap $\epsilon$ and the unknown true dynamics, including the true parameters $\theta$. 

In Section~2, a linear theory is developed in a linear and Gaussian setting under the assumptions I) and II) above. This linear theory will address question~1 above. The results in this section introduce a notion of {\it consistency} as a necessary (but not sufficient) condition for filtering with model error. By consistency condition, we mean the error covariance estimate agrees with the actual error covariance; this motivates us to introduce a weak measure to check whether the filter covariance estimate is under- or over-estimating the actual error covariance when optimal filtering is not available. In Section~3, we study a simple, yet challenging nonlinear problem, where the optimal filter is not available as in practical applications. The ultimate goal is to address the second part of question~1 and question~2. In Section 4, we will compare numerical results of filtering the two-layer Lorenz-96 models with a one-layer Lorenz-96 model combined with various stochastic parameterization methods. The numerical results in this section confirm the theoretical findings in Sections 2 and 3, even for larger discrete observation time intervals and sparsely observed slow variables. Furthermore, these results will suggest a promising method to address question~3. We conclude the paper with a short summary and discussion in Section~5. We also accompany this article with an electronic supplementary material that provides the detailed proofs of the analytical results and a detailed description of the online parameter estimation method.

\section{Linear Theory}\label{lintheory}

The goal in this section is to develop a linear theory for filtering multiscale dynamical systems with model errors.  In the presence of model error, even for a linear system, we must carefully differentiate between the {\it actual error covariance} of the filtered mean estimate and the {\it error covariance estimate} produced by the filtering scheme.  The actual error covariance is simply the expected mean squared error of the state estimate produced by the filter, on the other hand, the linear Kalman-Bucy filter \cite{kalman:61} produces an estimate of error covariance which solves a Riccati equation. In the perfect model scenario, the Kalman-Bucy solutions are optimal and these two error covariances are identical. When the error covariances agree, we say the filter estimate is {\it consistent}. However, when the model used by the filter is not the true model, finding a consistent filter estimate is nontrivial since the covariance solutions of the Riccati equation will typically differ from the actual error of the state estimate. 

In the discussion below, we will first show that there are infinitely many choices of parameters, $\Theta$, for the reduced model in \eqref{genReducedModel}, such that the filter covariance estimate matches the optimal covariance estimate of the true filter in \eqref{genFullModel}. However, most of these parameters will not give accurate estimates of the mean and therefore the covariance estimate will be inconsistent with the actual error covariance. In the context of predictability, information theoretic criteria were advocated to ensure consistent covariance estimates \cite{bm:12}. While in the context of filtering, information theoretic criteria were also suggested for optimizing the filtering skill \cite{bm:14}. In the mathematical analysis below, we will enforce a different criteria which is based on orthogonal projection on Hilbert subspaces (see Theorem 6.1.2 in \cite{oksendal:03}) to find the unique set of reduced filter parameters that ensures not only consistent but also optimal filtering in the sense of least squares. While this is a useful mathematical tool to understand the structure of the stochastic correction in the linear setting, in general, we do not advocate this criteria as a practical tool for parameter estimation. { Moreover, we will show that the same optimal parameters can also be found by matching the equilibrium covariance statistics and posterior covariance estimates or by matching two equilibrium statistics alone.}

Consider a linear model where $f_1 = a_{11}x + a_{12}y$ and $f_2 = a_{21}x + a_{22}y$ with a linear observation which involves only the slow variable, $x$.  For this particular case the full filtering problem in \eqref{genFullModel} becomes
\begin{align}\label{fullmodel}
d x &= (a_{11}x + a_{12}y)\,dt + \sigma_x d W_x, \nonumber \\
d y &= \frac{1}{\epsilon}({a_{21}x} + {a_{22}y})\,dt + \frac{\sigma_y}{\sqrt{\epsilon}}d W_y,\\
d z & = x \, dt + \sqrt{R} \,dV = H(x,y)^\top\,dt + \sqrt{R} \,dV,\nonumber
\end{align} 
where we define observation operator $H = (1,0)$ for convenience.
We assume that the matrix $A=(a_{ij})$ is negative definite and $\sigma_x, \sigma_y>0$ are constants of $\mathcal{O}(1)$. We also assume that $\tilde{a}= a_{11}-a_{12}a_{22}^{-1}a_{21}<0$, which guarantees the existence of the averaged dynamics in \eqref{genReducedModel} for $\epsilon\rightarrow 0$; in this case, $F(X)=\tilde{a}X$ and $\sigma_{X} = \sigma_x$ (see e.g., \cite{gh:13} for detailed derivation).  

\subsection{Expansion of the Optimal Filter}

For the continuous time linear filtering problem in \eqref{fullmodel}, the optimal filter estimates (in the sense of minimum variance estimator), are the first and second order statistics of a Gaussian posterior distribution that can be completely characterized by the Kalman-Bucy solutions \cite{kalman:61}. For this linear and Gaussian filtering problem, the covariance solutions of the filter will converge to a steady state covariance matrix $\hat{S}=\{\hat{s}_{ij}\}_{i,j=1,2}$, which solves the following algebraic Riccati equation, 
\begin{align}\label{riccati}
A_\epsilon \hat S + \hat SA_\epsilon^\top - \hat SH^\top R^{-1}H\hat S +Q_\epsilon = 0,
\end{align}
where,
\begin{align}
A _{\epsilon} = \left( \begin{array}{cc} a_{11} & a_{12} \\ a_{21}/\epsilon & a_{22}/\epsilon \end{array}\right), \hspace{20pt}\textup{ }\hspace{20pt} Q_{\epsilon} = \left(\begin{array}{cc} \sigma_x^2 & 0\\ 0 &\sigma_y^2/\epsilon \end{array}\right).\nonumber
\end{align}

We can rewrite the first diagonal component of the algebraic Riccati equation \eqref{riccati} for $\hat{s}_{11}:= \mathbb{E}((x-\hat{x})^2)$ as follows (see Appendix A in the electronic supplementary material):
\begin{align}\label{s11}
 -\frac{\hat{s}_{11}^2}{R} 
+ 2\tilde a \left(1 - \epsilon\hat a \right) \hat{s}_{11} 
+  \sigma_x^2(1-2\epsilon\hat a) + \epsilon \sigma_y^2 \frac{a_{12}^2}{a_{22}^2}   = \mathcal{O}(\epsilon^2) 
\end{align}
where $\tilde a = a_{11} - \frac{a_{12}a_{21}}{a_{22}}$ and $\hat a = \frac{a_{12}a_{21}}{a_{22}^2}$. 

Our goal is to find a one-dimensional model for the slow variable, $x$, which still gives the optimal state estimate.  Motivated by the results in \cite{gh:13}, and the fact that \eqref{s11} has the form of a one-dimensional Riccati equation, we consider the following one-dimensional linear filtering problem,
\begin{align}
dX &=  aX\,dt+\sigma_X \,dW_X, \label{reducedmodel}\\
dz &= X\,dt + \sqrt{R}\, dV.\nonumber\end{align}
The corresponding steady state covariance solution for the reduced filter in \eqref{reducedmodel} satisfies the following algebraic Riccati equation, 
\begin{align}\label{riccati1d1} -\frac{\tilde{s}^2}{R} + 2a\tilde{s} + \sigma_X^2 = 0.\end{align}
Substracting equation \eqref{s11} from \eqref{riccati1d1}, we have the following result (see the detailed proof in Appendix A in the electronic supplementary material),

\begin{theorem}\label{thm1}
Let $\hat{s}_{11}$ be the first diagonal component of the algebraic Riccati equation in \eqref{riccati} and let $\tilde{s}$ be the solution of \eqref{riccati1d1}. Then $\lim_{\epsilon\to 0} \frac{\tilde{s}-\hat{s}_{11}}{\epsilon} = 0$ if and only if 
\begin{align} \sigma_X^2 = -2(a-\tilde a(1-\epsilon\hat a))\hat{s}_{11} + \sigma_x^2(1-2\epsilon\hat a) + \epsilon\sigma_y^2\frac{a_{12}^2}{a_{22}^2}  + \mathcal{O}(\epsilon^2).\label{manifold} \end{align}
\end{theorem}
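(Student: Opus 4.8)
The plan is to follow the hint and subtract the one-dimensional Riccati equation \eqref{riccati1d1} from the expanded Riccati relation \eqref{s11}, then read off the condition on $\sigma_X^2$ that forces the covariance mismatch $\delta := \tilde{s}-\hat{s}_{11}$ to be $o(\epsilon)$. First I would form the difference of the two quadratics, using $\tilde{s}^2-\hat{s}_{11}^2 = \delta\,(\tilde{s}+\hat{s}_{11})$ for the quadratic terms and writing $\tilde{s}=\hat{s}_{11}+\delta$ in the linear terms, so that $2a\tilde{s}-2\tilde{a}(1-\epsilon\hat{a})\hat{s}_{11} = 2\bigl(a-\tilde{a}(1-\epsilon\hat{a})\bigr)\hat{s}_{11} + 2a\delta$. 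Collecting everything, the subtracted equation takes the form $\delta\,D + G = \mathcal{O}(\epsilon^2)$, where $D := 2a-(\tilde{s}+\hat{s}_{11})/R$ is the linearization factor and $G := 2\bigl(a-\tilde{a}(1-\epsilon\hat{a})\bigr)\hat{s}_{11} + \sigma_X^2 - \sigma_x^2(1-2\epsilon\hat{a}) - \epsilon\sigma_y^2 a_{12}^2/a_{22}^2$ is exactly $\sigma_X^2$ minus the right-hand side of \eqref{manifold}. The whole theorem then reduces to showing that $\delta=o(\epsilon)$ is equivalent to $G=\mathcal{O}(\epsilon^2)$.

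The crucial preliminary step is to check that $D$ stays bounded away from zero as $\epsilon\to 0$, so that one may divide by it. Using the negative-definiteness and stability hypotheses, I would take the stabilizing (positive) roots of both Riccati equations, so that $\hat{s}_{11},\tilde{s}>0$, and observe that to leading order $a\to\tilde{a}$ and $\tilde{s},\hat{s}_{11}\to \hat{s}_{11}^{(0)} = \tilde{a}R + \sqrt{\tilde{a}^2R^2+R\sigma_x^2}$, whence $D\to -2\sqrt{\tilde{a}^2R^2+R\sigma_x^2}/R < 0$. This non-degeneracy is the key structural fact and is precisely where the stability assumption II) enters.

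With $D$ invertible and $\mathcal{O}(1)$, the forward implication is immediate: if \eqref{manifold} holds then $G=\mathcal{O}(\epsilon^2)$, so $\delta = -G/D + \mathcal{O}(\epsilon^2) = \mathcal{O}(\epsilon^2)$ and in particular $\delta/\epsilon\to 0$. For the converse, assuming $\delta/\epsilon\to 0$ gives $\delta D = o(\epsilon)$ and hence $G=o(\epsilon)$; to upgrade this to $G=\mathcal{O}(\epsilon^2)$ I would invoke that $G$ possesses an asymptotic expansion in integer powers of $\epsilon$ (inherited from the smooth $\epsilon$-dependence of the Riccati solutions and of the chosen parameters $a,\sigma_X^2$), for which $o(\epsilon)$ and $\mathcal{O}(\epsilon^2)$ are equivalent, since both merely assert that the $\epsilon^0$ and $\epsilon^1$ coefficients of $G$ vanish.

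I expect the main obstacle to be securing these two regularity points rather than the algebra: one must confirm that the correct stabilizing branch is selected so that $D^{(0)}\neq 0$, and that $\hat{s}_{11}$, $\tilde{s}$, and the parameters genuinely admit expansions in powers of $\epsilon$, since it is exactly this smoothness that makes the two-sided statement ($o(\epsilon)\Leftrightarrow \mathcal{O}(\epsilon^2)$) go through. Once the non-degeneracy of $D$ and the legitimacy of the expansions are established, the subtraction and the matching of coefficients are routine.
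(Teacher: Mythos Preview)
Your approach is essentially identical to the paper's: subtract the two Riccati equations, factor out $\delta=\tilde s-\hat s_{11}$, and read off the condition on $\sigma_X^2$. You are in fact more careful than the paper in explicitly checking that the factor $D$ is bounded away from zero and in justifying the $o(\epsilon)\Leftrightarrow\mathcal{O}(\epsilon^2)$ upgrade via integer-power expansions; the only slip is that your leading-order computation of $D$ tacitly assumes $a\to\tilde a$, which is not part of the hypotheses, but the conclusion $D^{(0)}<0$ still holds for arbitrary $a$ once the stabilizing roots $\tilde s,\hat s_{11}>0$ are selected.
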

Theorem \ref{thm1} says that there is a manifold of parameters $\Theta=\{a, \sigma_X\}$ for which the steady-state filter covariance estimate $\tilde{s}$ produced by the reduced model agrees with the steady-state covariance estimate of the optimal filter $\hat{s}_{11}$, obtained with perfect model. So, for any parameters on the manifold \eqref{manifold}, the reduced filter mean estimate solves,
\begin{align}
d\tilde{x} = a\tilde{x}\,dt + \frac{\tilde s}{R}(dz-\tilde{x}\,dt),\label{reducedestimate}
\end{align}
while the true filter mean estimate for $x$-variable solves,
\begin{align}
d \hat{x} = H A_\epsilon (\hat{x},\hat{y})^\top\,dt + \frac{\hat{s}_{11}}{R}(dz-\hat{x}\,dt).\label{truestimate}
\end{align}
While the true filter estimate in \eqref{truestimate} is consistent, meaning that $\hat{s}_{11}=\mathbb{E}[(x-\hat{x})^2]$, as shown in the derivation of the Kalman-Bucy equations \cite{kalman:61}, the reduced filter estimate $\tilde{x}$ from \eqref{reducedestimate} is not always consistent in the presence of model error.  Notice that the actual steady state error covariance, $E_{11}=\lim_{t\rightarrow\infty}\mathbb{E}[e(t)^2]$, where $e(t)=x(t)-\tilde x(t)$, is not necessarily equal to the steady state filter covariance estimate $\tilde{s} = \hat{s}_{11} +\mathcal{O}(\epsilon^2)$. In fact, most choices of parameters on the manifold in \eqref{manifold} lead to poor filter performance, despite the optimality of $\tilde{s}$ (in the sense of minimum variance), due to the inconsistency of the reduced filter.

\begin{figure}
\centering
\includegraphics[width=.45\textwidth]{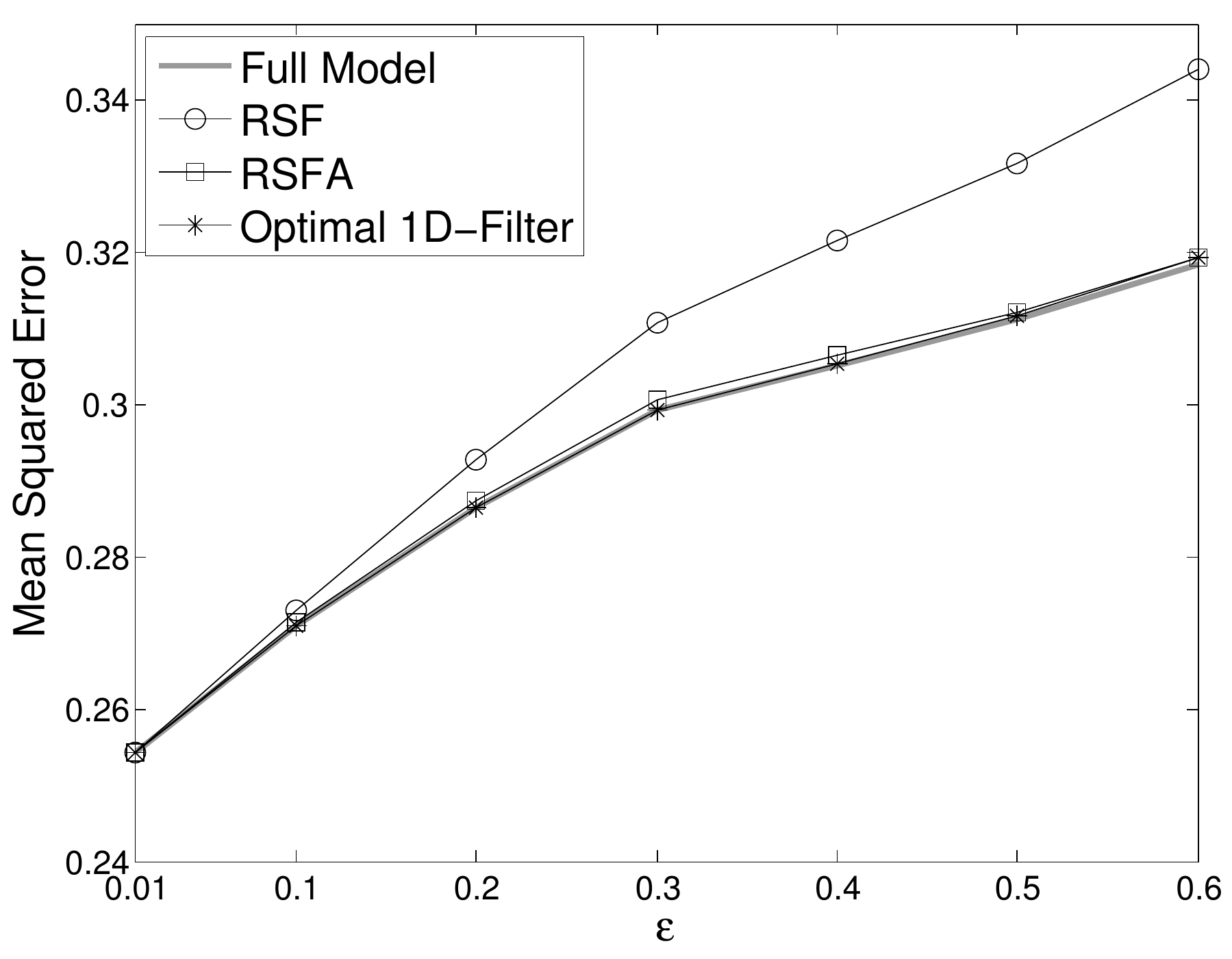}
\includegraphics[width=0.45\textwidth]{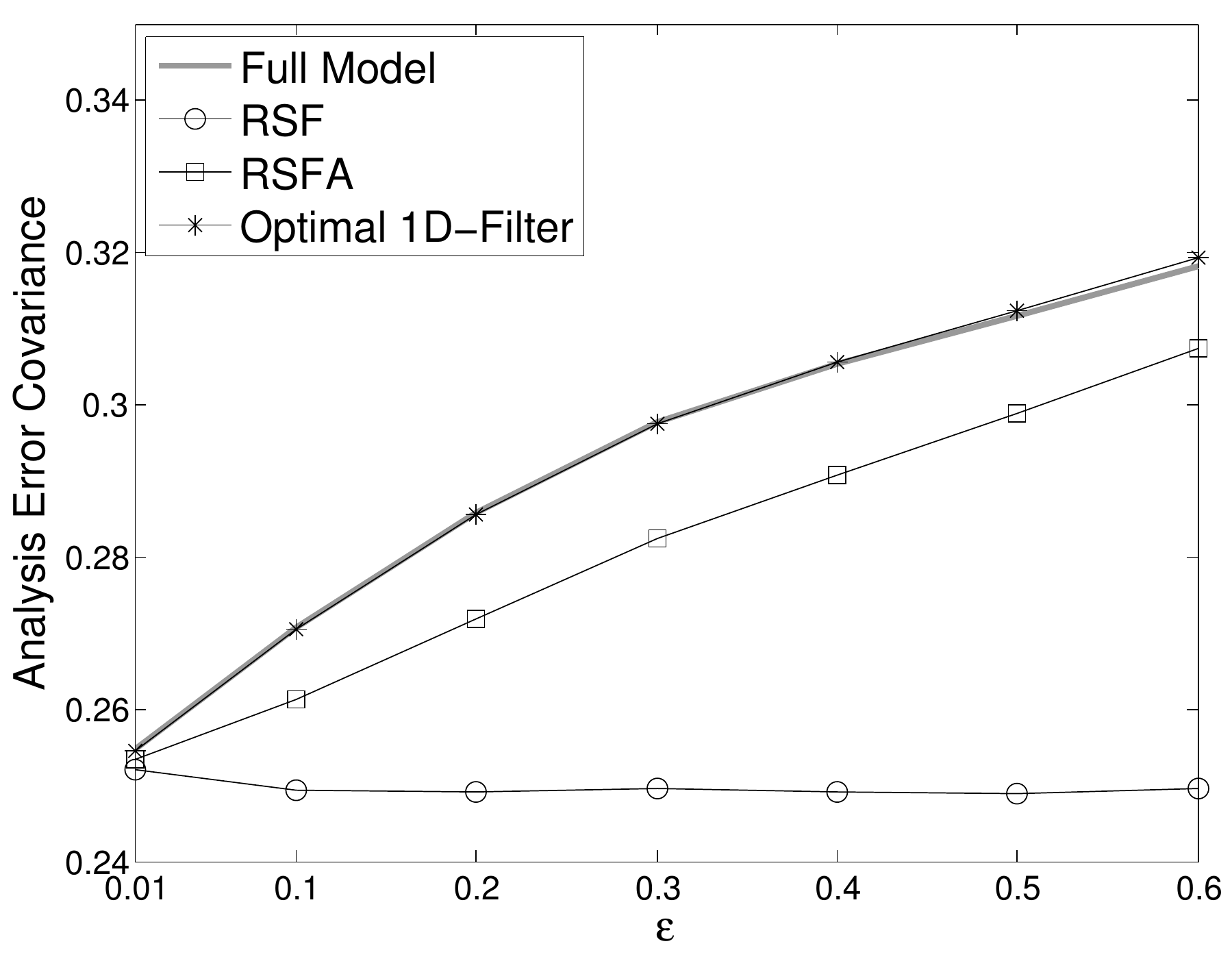}
\caption{\label{linearEx} Mean squared error of the filtered mean (left) and covariance estimates (right) for an observation series produced by \eqref{fullmodel}.  The filter uses either the full model \eqref{fullmodel} or various reduced models given by parameter choices in \eqref{reducedmodel}.  RSF: $a=\tilde a$ and $\sigma_X = \sigma_x$; RSFA: $a=\tilde a$ and $\sigma_X^2 = \sigma_x^2 + \epsilon\sigma_y^2 a_{12}^2/a_{22}^2$; optimal one-dimensional filter: $a = \tilde a(1-\epsilon \hat a)$ and $\sigma_X^2 = \sigma_x^2(1-2\epsilon\hat a) + \epsilon\sigma_y^2 a_{12}^2/a_{22}^2$.  The observation noise covariance is $R=0.5$ and observations are at time interval $\Delta t = 1$. Results are averaged over 100,000 assimilation cycles.}
\end{figure}

Our goal is to specify the parameters such that the filtered solutions are consistent, $E_{11} = \tilde{s} + \mathcal{O}(\epsilon^2)$. Unfortunately, this consistency condition is too weak and only specifies the choice of parameters up to order-$\epsilon$. From the general linear theory of Hilbert spaces, the optimal filter mean estimate in the sense of least squares is given by the orthogonal projection onto the subspace spanned by its innovations (see Theorem 6.1.2 and the discussion in Section 6.2 in \cite{oksendal:03}). This condition implies that the actual error, $e=x-\tilde{x}$ is orthogonal to the estimate $\tilde{x}$ under the joint probability distribution for $(W_X,V)$, that is, $\mathbb{E}(e\,\tilde{x})=0$.

By requiring the reduced filter estimates to satisfy $\mathbb{E}(e\,\tilde{x})=0$, we find a unique choice of parameters $\Theta=\{a,\sigma_X\}$ on the manifold in \eqref{manifold} which produces optimal filter solutions  (see Appendix B in the electronic supplementary material for the detailed proof of Theorem \ref{thm2}). 
To obtain these parameters, we apply the following procedure: { We write the Lyapunov equation for an augmented state variable, $(x,y,\tilde{x})^T$ and find the steady state solution for $\mathbb{E}(e\,\tilde{x})$ up to order-$\epsilon^2$.
Then we enforce the condition, $\mathbb{E}(e\,\tilde{x})=0$, which yields a unique choice of parameters on the manifold in \eqref{manifold}.  
Furthermore, we can also use the steady solutions of the same Lyapunov equation to verify that these parameters guarantee consistent filtered solutions, $E_{11}=\tilde{s}+\mathcal{O}(\epsilon^2)$. In fact, the same parameters can be obtained by requiring the variance of the reduced model in \eqref{reducedmodel} to match the equilibrium variance of the underlying system in \eqref{fullmodel} for variable $x$, in additional to the manifold in \eqref{manifold}.} These results are summarized in the following theorem.

\begin{theorem}\label{thm2} 
There exists a unique choice of parameters given by $a = \tilde a(1-\epsilon \hat a)$ and $\sigma_X^2$ according to Theorem \ref{thm1}, such that the steady state reduced filter \eqref{reducedmodel} is both consistent and optimal up to order-$\epsilon^2$.  This means that $\tilde s$, the steady state covariance estimate of the reduced filter, is consistent with the steady state actual error covariance $E_{11} = \lim_{t\to\infty}\mathbb{E}[(x(t)-\tilde x(t))^2]$ so that $\tilde s = E_{11} + \mathcal{O}(\epsilon^2)$, and also $\tilde s$ agrees with the steady state covariance $\hat s_{11}$ from the optimal filter $\tilde s = \hat s_{11} + \mathcal{O}(\epsilon^2)$. { The unique optimal parameters can also be determined by requiring the covariance of the reduced model to match that of the slow variable from the full model up to order-$\epsilon^2$.}
\end{theorem}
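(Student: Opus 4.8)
The plan is to recast the equilibrium-variance matching requirement as a second algebraic equation for the reduced parameters and to show that, together with the manifold \eqref{manifold} of Theorem \ref{thm1}, it determines $(a,\sigma_X^2)$ uniquely through order $\epsilon$. First I would compute the equilibrium variance of the slow variable $x$ under the full model \eqref{fullmodel}. Dropping the observation term, the pair $(x,y)$ is an Ornstein--Uhlenbeck process with drift $A_\epsilon$ and diffusion $Q_\epsilon$, so its stationary covariance $\Sigma=\{\sigma_{ij}\}$ solves the Lyapunov equation $A_\epsilon\Sigma+\Sigma A_\epsilon^\top+Q_\epsilon=0$. Writing out the three scalar balances and expanding $\sigma_{ij}=\sigma_{ij}^{(0)}+\epsilon\,\sigma_{ij}^{(1)}+\mathcal{O}(\epsilon^2)$, the leading order gives $\sigma_{11}^{(0)}=-\sigma_x^2/(2\tilde a)$, the stationary variance of the averaged dynamics $F(X)=\tilde a X$, $\sigma_X=\sigma_x$. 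The order-$\epsilon$ correction $\sigma_{11}^{(1)}$ then follows by substituting the leading relations $\sigma_{12}^{(0)}=-(a_{21}/a_{22})\sigma_{11}^{(0)}$ and the $(2,2)$ balance for $\sigma_{22}^{(0)}$ into the off-diagonal equation, using $\det A=\tilde a\,a_{22}$ and $\hat a=a_{12}a_{21}/a_{22}^2$ to simplify to $\sigma_{11}^{(1)}=\hat a\,\sigma_x^2/(2\tilde a)-a_{12}^2\sigma_y^2/(2\tilde a\,a_{22}^2)$.

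The reduced model \eqref{reducedmodel} is itself an Ornstein--Uhlenbeck process, so its stationary variance is simply $-\sigma_X^2/(2a)$. I would then impose the two conditions, the manifold \eqref{manifold} and the matching condition $-\sigma_X^2/(2a)=\sigma_{11}+\mathcal{O}(\epsilon^2)$, treating $(a,\sigma_X^2)$ as unknowns with $a=a^{(0)}+\epsilon\,a^{(1)}+\mathcal{O}(\epsilon^2)$ and $\sigma_X^2=\sigma_X^{2,(0)}+\epsilon\,\sigma_X^{2,(1)}+\mathcal{O}(\epsilon^2)$, and solve order by order. At leading order, the matching condition forces $\sigma_X^{2,(0)}=(a^{(0)}/\tilde a)\sigma_x^2$; inserting this into \eqref{manifold} factors as $(a^{(0)}-\tilde a)\big[\sigma_x^2/\tilde a+2s_0\big]=0$, where $s_0$ is the $\epsilon=0$ root of the Riccati equation \eqref{s11}. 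This fixes $a^{(0)}=\tilde a$ and $\sigma_X^{2,(0)}=\sigma_x^2$. At order $\epsilon$, eliminating $\sigma_X^{2,(1)}$ between the two order-$\epsilon$ balances collapses the system to a single scalar linear equation for $a^{(1)}$; a direct check shows $a^{(1)}=-\tilde a\hat a$ solves it, which is precisely $a=\tilde a(1-\epsilon\hat a)$, and then \eqref{manifold} returns the corresponding $\sigma_X^2$, recovering the optimal parameters of Theorem \ref{thm2}.

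The crux, and the step I expect to be the main obstacle, is uniqueness: I must show the determining linear equations at both orders are non-degenerate, i.e. the bracket $\sigma_x^2/\tilde a+2s_0=2(s_0-\sigma_{11}^{(0)})$ does not vanish. The key observation is that the leading-order filter (posterior) variance $s_0$ is strictly smaller than the equilibrium (prior) variance $\sigma_{11}^{(0)}=-\sigma_x^2/(2\tilde a)$ whenever the observations are informative ($R<\infty$). This follows because $g(s):=-s^2/R+2\tilde a s+\sigma_x^2$ is strictly decreasing on $[0,\infty)$ with $g(s_0)=0$ while $g(\sigma_{11}^{(0)})=-(\sigma_{11}^{(0)})^2/R<0$, forcing $s_0<\sigma_{11}^{(0)}$ and hence $s_0-\sigma_{11}^{(0)}\neq 0$. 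Consequently both $a^{(0)}$ and $a^{(1)}$ are uniquely pinned down, which gives uniqueness of the reduced parameters up to $\mathcal{O}(\epsilon^2)$ and shows that equilibrium-variance matching selects exactly the same parameters as the orthogonality condition $\mathbb{E}(e\,\tilde x)=0$ used in Theorem \ref{thm2}. The remaining work is routine asymptotic bookkeeping of the Lyapunov and Riccati expansions.
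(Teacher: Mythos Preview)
Your proposal correctly handles the \emph{last sentence} of the theorem---that equilibrium-variance matching, combined with the manifold of Theorem~\ref{thm1}, singles out the parameters $a=\tilde a(1-\epsilon\hat a)$ and the corresponding $\sigma_X^2$---and your order-by-order argument is essentially the same as the paper's, which substitutes the constraint $-\sigma_X^2/(2a)=c_{11}$ into the manifold and factors directly as $\bigl(a/(\tilde a(1-\epsilon\hat a))-1\bigr)\hat s_{11}^2/R=\mathcal{O}(\epsilon^2)$. Your explicit non-degeneracy check via $s_0<\sigma_{11}^{(0)}$ (posterior variance strictly below prior variance) is a clean way to rule out the trivial factor; the paper simply observes that $\hat s_{11}^2/R$ is $\mathcal{O}(1)$.

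However, the proposal leaves the \emph{main content} of the theorem unproven. The statement asserts that with these parameters the reduced filter is \emph{consistent}, i.e.\ $E_{11}=\lim_{t\to\infty}\mathbb{E}[(x-\tilde x)^2]=\tilde s+\mathcal{O}(\epsilon^2)$, and \emph{optimal} in the sense of the orthogonality condition $\mathbb{E}[(x-\tilde x)\tilde x]=0$. Neither of these follows from your two algebraic constraints: they are statements about the actual error of the reduced filter run on data from the full two-dimensional system, and require coupling the full dynamics $(x,y)$ to the reduced filter $\tilde x$. The paper obtains them by writing the steady-state Lyapunov equation for the augmented three-dimensional system $(x,y,\tilde x)$, solving for $e_{13}=\mathbb{E}[x\tilde x]$ and $e_{33}=\mathbb{E}[\tilde x^2]$ up to $\mathcal{O}(\epsilon^2)$, then (i) imposing $e_{13}-e_{33}=0$ to derive a second manifold whose intersection with that of Theorem~\ref{thm1} gives the unique parameters, and (ii) computing $E_{11}=e_{11}+e_{33}-2e_{13}$ explicitly and checking it equals $\tilde s+\mathcal{O}(\epsilon^2)$ at those parameters. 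Your proposal treats these steps as already done (``the orthogonality condition used in Theorem~\ref{thm2}''), but they are precisely what the theorem asks you to establish; the equilibrium-variance route is presented in the paper only as an \emph{alternative} characterization once consistency and optimality are in hand.
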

{

 We remark that a result of \cite{fz:11} shows that for $a=\tilde a + \mathcal{O}(\epsilon)$ and $\sigma_X^2 = \sigma_x^2 +\mathcal{O}(\epsilon)$, the reduced filter mean and covariance estimates are uniformly optimal for all time in the following sense: Given identical initial statistics, $\hat{x}(0)=\tilde{x}(0), \hat{s}_{11}(0)=\tilde{s}(0)>0$, there are time-independent constants $C$, such that, $\mathbb{E}\Big(|\hat{x}(t)-\tilde{x}(t)|^2\Big) \leq C\epsilon^2$.
 In fact, we conjecture that the pathwise convergence should be, 
 \[ \mathbb{E}\Big(|\hat{x}(t)-\tilde{x}(t)|^2\Big) \leq C\epsilon^4, \]
 for the unique parameters from Theorem \ref{thm2} and we confirm this conjecture numerically in Appendix B.  However, the proof of this would require solving the Lyapunov equation of the five-dimensional joint evolution of the full model, full filter, and reduced filter.  Since this Lyapunov equation is an algebraic system of 15 equations of 15 variables it is not illuminating to verify our conjecture analytically.}


Comparing this result to the reduced stochastic filter with an additive noise correction (RSFA) computed in \cite{gh:13}, Theorem~\ref{thm2} imposes additional order-$\epsilon$ corrections in the form of linear damping, $-\epsilon \tilde a\hat a \tilde{x}$, and additive stochastic forcing, $-2\epsilon\sigma_x^2\hat{a}\,dW_x$. 
This additive noise correction term was also found in the formal asymptotic derivation of \cite{gh:13} (they denoted the covariance estimate associated with this additive noise correction by $Q_2$), but the absence of the order-$\epsilon$ linear damping correction term in their calculation makes it impossible to match the posterior statistics of the full model to the same level of accuracy. They dropped this additional additive noise term and, subsequently, underestimated the true error covariance (as shown in Figure \ref{linearEx}). We now verify the accuracy of the filter covariance estimate suggested by Theorem~\ref{thm2} in the numerical simulation described below. 

In Figure \ref{linearEx}, we show numerical results comparing the true filter using the perfect model with approximate filter solutions based on three different one-dimensional reduced models of the form \eqref{reducedmodel}. Here, the model parameters are $a_{11}=a_{21}=a_{22}=-1, a_{12}=1, \sigma_x^2=\sigma_y^2=2$. The numerical experiments are for discrete time observations at $\Delta t = 1$ with observation noise covariance $R = 0.5$ and the dynamics are solved analytically between observations.  The three reduced models include: (1) the simple averaging model (RSF) where $a = \tilde a$ and $\sigma_X^2 = \sigma_x^2$; (2) the order-$\epsilon$ reduced model (RSFA) introduced in \cite{gh:13} with $a = \tilde a$ and $\sigma_X^2 = \sigma_x^2 + \epsilon \sigma_y^2 a_{12}^2/a_{22}^2$; and (3) the order-$\epsilon^2$ optimal reduced filter described in Theorem \ref{thm2}.  Notice that only the order-$\epsilon^2$ optimal reduced filter produces mean and covariance estimates that match the true filter solutions. Furthermore, the resulting covariance estimate is consistent, that is, the mean square error, $\tilde{E}_{11}:=\langle (x-\tilde x)^2 \rangle$, where $\langle\cdot\rangle$ denotes temporal average (which equals $E_{11}$ for ergodic posterior distribution) matches the asymptotic covariance estimates $\tilde s$ (compare the starred data points in the left and the right panels in Figure~\ref{linearEx}).  

{ In this linear and Gaussian example, we found the optimal stochastic reduced model either by applying an asymptotic expansion to the Kalman-Bucy solutions alone or by applying asymptotic expansion to both the model equilibrium covariance and the filter posterior covariance solutions. In fact, we will show in the next section that the same reduced model can be obtained by applying an asymptotic expansion to the equilibrium statistical solutions of the model alone. We note that the higher-order expansion of the filter solution does not require a pathwise expansion of the prior model.}


\subsection{An optimal stochastic parameter estimation method for filtering linear problems}

In practical applications, one may have no access to the true dynamics in \eqref{fullmodel} and in this case it is necessary to estimate the parameters in the reduced model in \eqref{reducedmodel} to obtain the optimal filtered solutions. Ideally, we would like to be able find the optimal parameters using some limited information about the marginal statistics of the slow variable, $x$.  For the linear SDE in \eqref{reducedmodel} (which is also known as the Ornstein-Uhlenbeck process \cite{gardiner:97}), the two parameters, namely the linear damping coefficient, $a$, and the noise amplitude $\sigma_X$, can be characterized by two equilibrium statistics, variance and correlation time. Theorem~\ref{thm2} guarantees that in the linear and Gaussian setting, one can obtain an optimal filtering by specifying the model parameters from these two equilibrium statistics. This parameter estimation strategy was introduced as the \emph{Mean Stochastic Model (MSM)} in \cite{mgy:10} (see also \cite{mh:12} for different stochastic parameterization strategies for the linear SDE in \eqref{reducedmodel}). Formally, we have:

\begin{corollary}\label{thm4} Given the equilibrium variance and the correlation time statistics of the true signal, $x(t)$, that evolves based on the linear SDE in \eqref{fullmodel}, the reduced Mean Stochastic Model (MSM) filter is an optimal filter, in the sense that the posterior mean and covariance estimates differ by an order of $\epsilon^2$ from the true filter estimates obtained with the perfect model.
\end{corollary}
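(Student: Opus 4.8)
The plan is to show that the two parameters produced by the MSM prescription---matching the reduced Ornstein--Uhlenbeck process \eqref{reducedmodel} to the equilibrium variance and correlation time of the slow variable $x$---coincide, up to $\mathcal{O}(\epsilon^2)$, with the unique optimal parameters identified in Theorem~\ref{thm2}. Once this agreement is established the corollary follows at once: the steady Riccati solution $\tilde s$ and the steady Lyapunov quantities governing $\tilde x$ depend rationally (hence smoothly, on the stable region) on $(a,\sigma_X)$, and the true filter statistics are $\mathcal{O}(1)$, so an $\mathcal{O}(\epsilon^2)$ perturbation of the parameters shifts the reduced posterior mean and covariance by only $\mathcal{O}(\epsilon^2)$ and Theorem~\ref{thm2} transfers verbatim. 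Recall that for $dX=aX\,dt+\sigma_X\,dW_X$ the equilibrium variance is $-\sigma_X^2/(2a)$ and the integrated correlation time is $-1/a$, so the MSM estimator reads $a=-1/T$ and $\sigma_X^2=2V/T$, where $V$ and $T$ are the equilibrium variance and correlation time of the true signal.

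First I would compute these statistics of the full model \eqref{fullmodel} asymptotically in $\epsilon$. Writing $\Gamma=(\gamma_{ij})$ for the stationary covariance of $(x,y)^\top$, it solves the Lyapunov equation $A_\epsilon\Gamma+\Gamma A_\epsilon^\top+Q_\epsilon=0$; solving this to leading order gives $V=\gamma_{11}=-\sigma_x^2/(2\tilde a)+\mathcal{O}(\epsilon)$ and the cross-covariance ratio $\gamma_{12}/\gamma_{11}=-a_{21}/a_{22}+\mathcal{O}(\epsilon)$. For the correlation time I would use that the stationary two-time covariance is $\mathbb{E}[(x,y)^\top(t+\tau)\,(x,y)(t)]=e^{A_\epsilon\tau}\Gamma$ for $\tau\ge0$, so that the normalized autocorrelation of $x$ integrates to
\[
T=\frac{1}{\gamma_{11}}\int_0^\infty\big[e^{A_\epsilon\tau}\Gamma\big]_{11}\,d\tau=\frac{1}{\gamma_{11}}\big[-A_\epsilon^{-1}\Gamma\big]_{11},
\]
where the last equality uses $\int_0^\infty e^{A_\epsilon\tau}\,d\tau=-A_\epsilon^{-1}$, valid because $A_\epsilon$ is stable. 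Since $A_\epsilon^{-1}=\tfrac{1}{d}\left(\begin{smallmatrix}a_{22}&-\epsilon a_{12}\\ -a_{21}&\epsilon a_{11}\end{smallmatrix}\right)$ with $d=\det A=\tilde a\,a_{22}$ remaining regular as $\epsilon\to0$, this yields $T=-\tfrac{1}{d}\big(a_{22}-\epsilon a_{12}\gamma_{12}/\gamma_{11}\big)+\mathcal{O}(\epsilon^2)$, and only the leading-order cross-covariance is needed here because it enters pre-multiplied by $\epsilon$.

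Substituting the leading ratio and inverting then gives $a=-1/T=\tilde a/(1+\epsilon\hat a)+\mathcal{O}(\epsilon^2)=\tilde a(1-\epsilon\hat a)+\mathcal{O}(\epsilon^2)$, which is exactly the optimal linear damping of Theorem~\ref{thm2}. Combining the variance match $\sigma_X^2=-2a\,\gamma_{11}$ with the characterization in Theorem~\ref{thm2} of the optimal $\sigma_X^2$ as the value making the reduced equilibrium variance equal $\gamma_{11}$, the two noise amplitudes also agree to $\mathcal{O}(\epsilon^2)$, since $\gamma_{11}=\mathcal{O}(1)$ and the two values of $a$ already agree to that order. This would complete the identification of the MSM parameters with the optimal ones and, via the stability remark above, establish the stated $\mathcal{O}(\epsilon^2)$ optimality.

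The main obstacle is the asymptotic control of the correlation time, because it couples the stationary covariance (a Lyapunov solve) with the integral of the full two-time autocorrelation, whose generator $A_\epsilon$ carries a fast eigenvalue of size $a_{22}/\epsilon$. The identity $\int_0^\infty e^{A_\epsilon\tau}\,d\tau=-A_\epsilon^{-1}$ is what makes this tractable: it avoids diagonalizing $A_\epsilon$ and shows directly that the fast mode contributes only at $\mathcal{O}(\epsilon)$, the delicate point being the bookkeeping of which orders of $\Gamma$ feed into $T$ at order $\epsilon$. The remaining step---justifying that the optimality conclusion of Theorem~\ref{thm2} is stable under $\mathcal{O}(\epsilon^2)$ parameter perturbations---is routine, following from the smooth dependence of the steady Riccati and Lyapunov solutions on $(a,\sigma_X)$ throughout the stable regime.
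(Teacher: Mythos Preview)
Your proposal is correct and follows essentially the same route as the paper: both compute the correlation time of $x$ via the identity $\int_0^\infty e^{A_\epsilon\tau}\,d\tau=-A_\epsilon^{-1}$ (the paper packages this as a separate lemma), read off $a=-1/T=\tilde a(1-\epsilon\hat a)+\mathcal{O}(\epsilon^2)$, pair it with the equilibrium-variance match, and then invoke Theorem~\ref{thm2}. The only cosmetic differences are that the paper computes $\gamma_{11}$ explicitly to order $\epsilon$ (its Lemma~1) rather than appealing to the variance characterization inside Theorem~\ref{thm2} as you do, and that your closing remark on smooth dependence of the steady Riccati/Lyapunov solutions on $(a,\sigma_X)$ is a careful justification the paper leaves implicit.
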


\begin{proof}{
The slow variable, $x$, from the full model in \eqref{fullmodel} has correlation time $T_x = \frac{-1}{\tilde a(1-\epsilon \hat a)} + \mathcal{O}(\epsilon^2)$ which was found in Lemma 2 of Appendix B.  Furthermore, the equilibrium variance of the slow variable, $x$, from \eqref{fullmodel} is,
\[ \mathbb{E}[x(t)^2]  = \frac{\sigma_x^2(1-2\epsilon\hat a) + \epsilon\frac{a_{12}^2}{a_{22}^2}\sigma_y^2}{ -2\tilde a(1-\epsilon\hat a)} + \mathcal{O}(\epsilon^2), \]
as shown in Lemma 1 of Appendix B.
%

The Mean Stochastic Model (MSM) for \eqref{reducedmodel} specifies its parameters with the analytical formula for the variance statistics, $\mathbb{E}[x^2] = \mathbb{E}[X^2]=-\sigma_X^2/(2a)$, and correlation times, $T_x = T_{X}=a^{-1}$ \cite{mgy:10,mh:12}, and from these equations, we obtain $a = \tilde a(1-\epsilon\hat a), \sigma_X^2 = \sigma_x^2(1-2\epsilon\hat a) + \epsilon\frac{a_{12}^2}{a_{22}^2}\sigma_y^2$, which are the parameters in Theorem~\ref{thm2} that give the optimal filter solutions up to order-$\epsilon^2$. }
\end{proof}

This result suggests that in the linear and Gaussian setting, it is possible to find the parameters for optimal filtering without using the filter, by using the Mean Stochastic Model \cite{mgy:10,mh:12}. Furthermore, these parameters also give the optimal equilibrium statistics, up to order-$\epsilon^2$. In the nonlinear setting, however, filtering with the Mean Stochastic Model can produce an accurate mean estimate but typically underestimates the covariance statistics \cite{ls:12}. In the next section, we will explain how this issue arises.

\section{Extension of the Linear Theory to a Nonlinear Test Model}\label{nonlinear theory}

In this section, we consider a simple nonlinear continuous-time filtering  problem,
\begin{align}\label{SPEKF}
d u &= [-(\gamma + \lambda_u)u + b] \, dt+ \sigma_u dW_u, \nonumber \\
d b &= -\frac{\lambda_b}{\epsilon} b \,dt + \frac{\sigma_b}{\sqrt{\epsilon}}dW_b, \\
d\gamma &= -\frac{\lambda_{\gamma}}{\epsilon} \gamma \,dt + \frac{ \sigma_{\gamma}}{\sqrt{\epsilon}} dW_{\gamma}, \nonumber\\
dz &= h(u)\,dt +\sqrt{R}\,dV = u\,dt +\sqrt{R}\,dV.\label{spekfobs}
\end{align}
The discrete observation-time analog of this nonlinear filtering problem was introduced as SPEKF, which stands for ``Stochastic Parameterized Extended Kalman Filter" in \cite{ghm:10b,ghm:10a}, in which filter estimates for SPEKF are obtained by applying a Kalman update to the exactly solvable prior statistical solutions of the full model in \eqref{SPEKF}.  
The nonlinear system in \eqref{SPEKF} has several attractive features as a test model. First, it has exactly solvable statistical solutions which are non-Gaussian.  This fact has allowed evaluation of non-Gaussian prior statistics conditional to the Gaussian posterior statistical solutions of a Kalman filter, which verified certain uncertainty quantification methods \cite{mb:12,bm:13}. Second, the results in \cite{bgm:12} suggest that the system in \eqref{SPEKF} can reproduce signals in various turbulent regimes such as intermittent instabilities in a turbulent energy transfer range, a dissipative range, and for laminar dynamics. Third, the system in \eqref{SPEKF} was also used as a test bed for investigating the consistency of the statistical solutions for various imperfect models in the context of long-term predictability \cite{bm:12}. 
Our goal here is to verify the existence of an ``accurate" reduced filter for this simple test model and to determine whether the corresponding reduced filter model produces accurate long term statistical prediction. Then we will close this section with a simple example which shows what can go wrong when an insufficient reduced stochastic model is used.

In contrast to the linear filtering problem in Section~2, the optimal solution to a nonlinear filtering problem is not available in practice, since it requires solving an infinite-dimensional stochastic system. In particular, the true posterior distribution, $p(\vec u,t) = P(\vec u,t \, | \, z(\tau), 0\leq \tau \leq t)$ for $\vec u = (u,b,\gamma)$, solves the Kushner equation \cite{kushner:64},
\begin{eqnarray} 
dp = \mathcal{L}^*p\, dt + p(h-\mathbb{E}[h])^\top R^{-1}dw_{\hat u}, \label{kushner}
\end{eqnarray}
where $\mathcal{L}^*$ is the Fokker-Planck operator for the state variables $\vec u$.  The term $dw_{\hat u} = dz-\mathbb{E}[h]dt$ is called the innovation process, and it represents the difference between the actual observation $z$ and the expected observation $\mathbb{E}[h]$ with respect to $p$.  As in the linear example above we will assume that $h(\vec u) = u$ so that only the slow variable is observed, thus allowing fair comparison with a reduced model for the slow variable. The Kushner equation is a stochastic partial differential equation (SPDE) which is easily solved when both the dynamics and observation process are linear, in which case one recovers the Kalman-Bucy equations. Since most practical methods that are being used for assimilating high-dimensional nonlinear problems are linear (or Kalman) based methods, we restrict our study to the Gaussian approximation of first two moments, $\hat u = \int u p\, d\vec u$ and $\hat S = \int (u-\hat u)^2p\, d\vec u$, for the slow variable, $u$, of the conditional density which solves \eqref{kushner}. 

In particular, substituting the Kushner expression for $dp$ in $d\hat u = \int u dp d\vec u$ and applying integration by parts with the assumption that $p$ has fast decay at infinity, we find that, 
\[ d{\hat{u}} = \left(-\lambda_{u}\hat{u} - \overline{u\gamma} + \overline{b}\, \right) dt+ \hat{S}R^{-1}dw_{\hat{u}}, \]
where $\overline{u\gamma} = \int u \gamma p\, d\vec u$ and $\overline{b} = \int b p\, d\vec u$.  By differentiating these terms and applying the expansion $p = p_0 + \epsilon p_1$ we can explicitly approximate these terms up to order-$\epsilon^2$.  The full details of this expansion are found in Appendix C (see the electronic supplementary material), where we find that evolution of $\hat u$ and $\hat S$ reduces to
\begin{align}\label{fullSPEKFsolution}
d{\hat{u}} &= -\left(\lambda_{u} -  \frac{\epsilon\sigma_{\gamma}^2}{2\lambda_{\gamma}(\lambda_u\epsilon +\lambda_{\gamma})} \right)\hat{u}\, dt + \hat{S}R^{-1}dw_{\hat{u}} + \mathcal{O}(\epsilon^2), \nonumber \\
d\hat{S} &= \left[-2\left(\lambda_u - \frac{\epsilon\sigma_{\gamma}^2}{\lambda_{\gamma}(\lambda_u\epsilon +\lambda_{\gamma})} \right) \hat{S} + \frac{\epsilon\sigma_{\gamma}^2}{\lambda_{\gamma}(\lambda_u\epsilon +\lambda_{\gamma})}\hat{u}^2 + \sigma_u^2 + \frac{\epsilon\sigma_b^2}{\lambda_b(\lambda_b + \lambda_u\epsilon)}  - \hat{S}R^{-1}\hat{S} \right]dt, \nonumber \\
&\hspace{10pt}+ \left[\int (u-\hat u)^3 p  d\vec{u} \right] R^{-1} dw_{\hat{u}}+  \mathcal{O}(\epsilon^2).
\end{align}
These equations give the exact solutions for the evolution of the first two statistics of the \emph{posterior distribution}, $p$, up to order-$\epsilon^2$, however they are not closed since the skewness $\int (u-\hat u)^3 p\,  d\vec{u}$ appears in the evolution of the covariance $\hat S$.  We close these equations by assuming that the posterior distribution is Gaussian, or effectively, $\int (u-\hat u)^3 p  d\vec{u} = 0$. While the equilibrium statistics of the dynamics in \eqref{SPEKF} have zero skewness, this is not necessarily the case for the posterior distribution given a noisy observation sequence \cite{bm:13}. Note that this closure is  different from the Gaussian Closure Filter (GCF) introduced in \cite{bgm:12}, which applies a Gaussian closure on the prior dynamics before using a Kalman update to obtain posterior solutions.  

Since we are interested in finding a one-dimensional reduced model for the slow variable $u$ we only derive the moment estimates for $u$ which are given by,
\begin{align}\label{twomomentSPEKF}
d{\hat{u}} &= -\left(\lambda_{u} -  \frac{\epsilon\sigma_{\gamma}^2}{2\lambda_{\gamma}(\lambda_u\epsilon +\lambda_{\gamma})} \right)\hat{u}\, dt + \hat{S}R^{-1}dw_{\hat{u}} + \mathcal{O}(\epsilon^2), \nonumber \\
\frac{d\hat{S}}{dt} &= -2\left(\lambda_u - \frac{\epsilon\sigma_{\gamma}^2}{\lambda_{\gamma}(\lambda_u\epsilon +\lambda_{\gamma})} \right) \hat{S} + \frac{\epsilon\sigma_{\gamma}^2}{\lambda_{\gamma}(\lambda_u\epsilon +\lambda_{\gamma})}\hat{u}^2 + \sigma_u^2 + \frac{\epsilon\sigma_b^2}{\lambda_b(\lambda_b + \lambda_u\epsilon)}  - \hat{S}R^{-1}\hat{S} +  \mathcal{O}(\epsilon^2).
\end{align}
We refer to these statistical estimates as the {\bf continuous-time SPEKF solutions} for the variable $u$. To obtain the full continuous-time SPEKF solution, one can compute the mean and covariance matrix of the full state $\vec u$, with similar computations via the It\^o calculus. In this sense, the original SPEKF that was introduced in \cite{ghm:10a,ghm:10b} is a discrete-time analog of the continuous-time SPEKF since it implicitly truncates the higher-order moments of the posterior statistics through a discrete-time Kalman update.


Motivated by the results in \cite{mb:12,gh:13,bm:13}, we now propose the following reduced filter model to approximate the filtering problem in \eqref{SPEKF},
\begin{align}
\label{reducedSPEKF} dU &= -\alpha U dt + \beta U \circ dW_\gamma + \sigma_1 dW_u + \sigma_2 dW_b,\nonumber \\ &= -\left(\alpha-\frac{\beta^2}{2}\right)U dt + \beta U dW_\gamma + \sigma_1 dW_u + \sigma_2 dW_b, \\
dz &= h(U)\,dt + \sqrt{R}\,dV = U\,dt + \sqrt{R}\,dV.\nonumber
\end{align}
The evolution of the first two moments of \eqref{reducedSPEKF}, $\tilde u = \int U\pi dU$ and $\tilde S = \int (U - \tilde u)^2\pi\,dU$, where $\pi$ is the posterior distribution governed by the Kushner equation for \eqref{reducedSPEKF}, 
are given by,
\begin{align}\label{twomomentReducedSPEKF}
d{\tilde{u}} &= -\left(\alpha-\frac{\beta^2}{2}\right)\tilde{u}\,dt + \tilde SR^{-1}dw_{\tilde{u}}, \nonumber \\
\frac{d}{dt}\tilde S &= -2\left(\alpha-\beta^2\right) \tilde S + \beta^2  \tilde{u}^2 + \sigma_1^2+\sigma_2^2 - \tilde SR^{-1}\tilde S,
\end{align}
where $dw_{\tilde{u}} = dz - \tilde{u}\,dt$ denotes the innovation process and Gaussian closure is imposed by setting the skewness to zero (see Appendix C for detailed derivation).  We can specify the parameters in \eqref{reducedSPEKF} by matching coefficients in the equations governing the evolution of the mean and covariance in the filters \eqref{twomomentSPEKF} and \eqref{twomomentReducedSPEKF} which yields 
\begin{align}\label{params}
\alpha &= \lambda_u, \quad \sigma_1^2 = \sigma_u^2,\nonumber \\
\sigma_2^2 &= \frac{\epsilon\sigma_b^2}{\lambda_b(\lambda_b + \epsilon\lambda_u)}, \quad\beta^2 =  \frac{\epsilon\sigma_{\gamma}^2}{\lambda_{\gamma}(\lambda_u\epsilon +\lambda_{\gamma})}.
\end{align}
We refer to the solutions of \eqref{twomomentReducedSPEKF} with parameters in \eqref{params} as the {\bf continuous-time reduced SPEKF solutions} of the filtering problem \eqref{reducedSPEKF}.  With this choice of coefficients we have the following result (see Appendix C for detailed proof).
\begin{theorem}\label{thm3}
Let $\lambda_u > 0$, and let $z$ be noisy observations of the state variable $u$ which solves the full model in \eqref{SPEKF}.  Given identical initial statistics, $\tilde{u}(0)=\hat{u}(0)$ and $\tilde{S}(0) = \hat S(0)>0$, the mean and covariance estimates of a stable continuous-time reduced SPEKF in \eqref{reducedSPEKF} with parameters \eqref{params} agree with mean and covariance of a stable continuous-time SPEKF for variable $u$ in the following sense. There exist time-independent constants, $C_1, C_2$, such that,
\begin{align}
|\hat{S}(t)-\tilde{S}(t)| &\leq C_1\epsilon,\nonumber\\
\mathbb{E}\left[|\hat{u}(t)-\tilde{u}(t)|^2 \right]&\leq C_2 \epsilon^2.\nonumber
\end{align}
Furthermore, the reduced filtered solutions are also consistent, up to order-$\epsilon$.
\end{theorem}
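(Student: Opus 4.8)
The plan is to exploit the fact that, with the parameter choice \eqref{params}, every coefficient of the reduced moment system \eqref{twomomentReducedSPEKF} coincides \emph{exactly} with the corresponding coefficient of the full continuous-time SPEKF system \eqref{twomomentSPEKF}: the mean damping rates $\alpha-\beta^2/2$ both equal $\lambda_u-\epsilon\sigma_\gamma^2/(2\lambda_\gamma(\lambda_u\epsilon+\lambda_\gamma))$, the covariance damping rates $\alpha-\beta^2$ agree, the multiplicative coefficients $\beta^2$ of $\hat u^2$ and $\tilde u^2$ agree, and $\sigma_1^2+\sigma_2^2$ reproduces $\sigma_u^2+\epsilon\sigma_b^2/(\lambda_b(\lambda_b+\lambda_u\epsilon))$. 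Hence the only discrepancy between the two systems is the $\mathcal{O}(\epsilon^2)$ remainder truncated in deriving \eqref{twomomentSPEKF}, and the problem reduces to showing that two coupled filters driven by the same observation path, whose coefficients differ only at $\mathcal{O}(\epsilon^2)$, remain close.

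First I would introduce the difference processes $D_u=\hat u-\tilde u$ and $D_S=\hat S-\tilde S$ and, using the shared observation increment $dz$ (so that $dw_{\hat u}-dw_{\tilde u}=-D_u\,dt$), derive their evolution. Writing the gain discrepancy as $\hat S\,dw_{\hat u}-\tilde S\,dw_{\tilde u}=D_S\,dz-(\hat S D_u+\tilde u D_S)\,dt$, the mean difference obeys
\begin{align}
dD_u=-\Big(\lambda_u-\tfrac{\beta^2}{2}+R^{-1}\hat S\Big)D_u\,dt+R^{-1}D_S\,dw_{\tilde u}+\mathcal{O}(\epsilon^2)\,dt,\nonumber
\end{align}
while subtracting the two Riccati equations in \eqref{twomomentSPEKF}--\eqref{twomomentReducedSPEKF} and factoring the quadratic differences gives a pathwise linear ODE for the covariance difference,
\begin{align}
\frac{dD_S}{dt}=-\Big(2(\alpha-\beta^2)+R^{-1}(\hat S+\tilde S)\Big)D_S+\beta^2(\hat u+\tilde u)D_u+\mathcal{O}(\epsilon^2).\nonumber
\end{align}
The stability hypothesis guarantees that both damping rates are bounded below by a positive constant uniformly in $t$ and that the filter means and covariances have uniformly bounded moments; these a priori bounds are what make every constant below time-independent.

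Next I would close the estimates by a one-step bootstrap that uses the scale separation $\beta^2=\mathcal{O}(\epsilon)$ to break the circular coupling. For the covariance, the forcing $\beta^2(\hat u+\tilde u)D_u$ is $\mathcal{O}(\epsilon)$ once the a priori bound on the means is invoked (it only needs $D_u$ bounded, not small), so Gr\"onwall applied to the $D_S$ ODE yields $|\hat S(t)-\tilde S(t)|\le C_1\epsilon$. Feeding this back, I would apply It\^o's formula to $D_u^2$: the martingale part of $R^{-1}D_S\,dw_{\tilde u}$ contributes a quadratic-variation forcing $R^{-1}D_S^2\,dt=\mathcal{O}(\epsilon^2)\,dt$, the drift part $R^{-1}D_S(u-\tilde u)$ (arising because $dw_{\tilde u}=(u-\tilde u)\,dt+\sqrt R\,dV$) is handled by Cauchy--Schwarz and Young's inequality, absorbing $\delta\,\mathbb{E}[D_u^2]$ into the damping and leaving an $\mathcal{O}(\epsilon^2)$ remainder via the bounded second moment of the filter error $u-\tilde u$, and the $\mathcal{O}(\epsilon^2)$ drift residual is absorbed similarly. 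Gr\"onwall on $m(t)=\mathbb{E}[D_u^2(t)]$ with $m(0)=0$ then gives $\mathbb{E}[|\hat u(t)-\tilde u(t)|^2]\le C_2\epsilon^2$. Consistency follows from the decomposition $u-\tilde u=(u-\hat u)+D_u$: squaring and taking expectations, the cross term is $\mathcal{O}(\epsilon)$ by Cauchy--Schwarz together with $\mathbb{E}[D_u^2]=\mathcal{O}(\epsilon^2)$, so $\mathbb{E}[(u-\tilde u)^2]=\mathbb{E}[(u-\hat u)^2]+\mathcal{O}(\epsilon)=\tilde S+\mathcal{O}(\epsilon)$, where the last step uses the consistency of the full SPEKF and $\hat S=\tilde S+\mathcal{O}(\epsilon)$.

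I expect the main obstacle to be the coupled, non-martingale structure of the two difference equations combined with the demand for time-independent constants. The innovation $dw_{\tilde u}$ is not a martingale increment under the data-generating measure, so its drift reintroduces the actual filter error $u-\tilde u$ into the $D_u$ dynamics, and the estimate cannot be closed without uniform-in-time control of that error; likewise the multiplicative term $\beta^2(\hat u+\tilde u)D_u$ feeds the mean back into the covariance. Making the a priori moment and stability bounds rigorous uniformly in $t$, and checking that the truncated $\mathcal{O}(\epsilon^2)$ remainders are genuinely controlled in the relevant $L^2$ norms rather than only formally, is the delicate part; the saving grace is that $\beta^2=\mathcal{O}(\epsilon)$ renders the mean-to-covariance feedback weak, so a single pass through the two Gr\"onwall arguments suffices and no fixed-point iteration is required.
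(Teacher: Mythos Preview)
Your treatment of the two convergence bounds is correct and is essentially a fleshed-out version of what the paper does: after observing that every coefficient of \eqref{twomomentReducedSPEKF} matches the corresponding coefficient of \eqref{twomomentSPEKF} under \eqref{params}, the paper simply invokes the argument of \cite{fz:11} for the Gr\"onwall step, whereas you supply the explicit difference equations for $D_u$ and $D_S$ and the bootstrap that exploits $\beta^2=\mathcal{O}(\epsilon)$ to break the feedback loop. Your derivation of the $D_u$ equation via $\hat S\,dw_{\hat u}-\tilde S\,dw_{\tilde u}=D_S\,dz-(\hat S D_u+\tilde u D_S)\,dt$ and your handling of the non-martingale part of $dw_{\tilde u}=(u-\tilde u)\,dt+\sqrt R\,dV$ through Young's inequality are exactly the kind of details the paper omits. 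At the level of rigor of the paper, where the stability hypothesis is used to impose uniform boundedness of the filter moments, your argument goes through.

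For the consistency claim, however, you take a different route from the paper, and it has a gap. The paper proves consistency \emph{directly} for the reduced filter: it writes the dynamics of the actual error $e=u-\tilde u$, obtains a Lyapunov-type equation $\frac{dE}{dt}=-2\lambda_u E+\sigma_u^2-\tilde S R^{-1}\tilde S+\mathcal{O}(\epsilon)$ for $E=\mathbb{E}[e^2]$, observes that this matches the reduced Riccati equation for $\tilde S$ up to $\mathcal{O}(\epsilon)$, and concludes $|E-\tilde S|\le \mathcal{O}(\epsilon)$ by Gr\"onwall. You instead decompose $u-\tilde u=(u-\hat u)+D_u$ and then assert $\mathbb{E}[(u-\hat u)^2]=\hat S$, i.e.\ consistency of the \emph{full} continuous-time SPEKF. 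But $(\hat u,\hat S)$ here are the solutions of the truncated, Gaussian-closed moment system \eqref{twomomentSPEKF}, not the exact posterior moments; their consistency is not established anywhere in the paper, and the numerical results in Table~\ref{table1} (where the SPEKF consistency measure $\mathcal{C}$ is $1.27$ and $11.5$ in the two regimes) confirm that it does not hold in general. Your decomposition therefore relies on an unproved intermediate fact that the paper's direct Lyapunov argument for $e=u-\tilde u$ sidesteps entirely. The fix is easy: replace your last paragraph with the direct error--Lyapunov comparison, which uses only the reduced filter and the true dynamics for $u$.
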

Theorem \ref{thm3} shows that the continuous-time reduced SPEKF solutions in \eqref{twomomentReducedSPEKF} are consistent up to order-$\epsilon$, and match the first two moments of the continuous-time SPEKF solutions for the slow variable $u$ up to order-$\epsilon$. Moreover, Theorem \ref{thm3} implies that in the context of Gaussian closure on the posterior distribution, accounting for truncation of fast time scales in a nonlinear model with only additive noise requires a multiplicative noise correction term in the reduced model.  

We note that the term $\epsilon \lambda_u$ appearing in the denominator of the parameters $\sigma_2$ and $\beta$ in \eqref{params} is technically an order-$\epsilon^2$ adjustment, however, this term arises naturally in the derivation of the continuous-time SPEKF solutions for \eqref{SPEKF} in Appendix C and is important as we will discuss below. 
We should point out that these extra order-$\epsilon^2$ correction terms were not found in the white noise limit approximation \cite{mb:12,gh:13,bm:13}.


\subsection{Numerical Experiments: assessing the mean and covariance filter estimates}

In the numerical experiments below, we show results for two regimes (as defined in \cite{bgm:12}) for \eqref{SPEKF}. Regime I corresponds to the turbulent energy transfer range, in which $\gamma$ decays faster than $u$. The parameters for this regime are: $\lambda_u= 1.2-1.78i, \lambda_b = 0.5-i, \lambda_{\gamma} = 20, \sigma_u = 0.5, \sigma_b = 0.5, \sigma_{\gamma} = 20, \epsilon=1$. Regime II, as defined in \cite{bgm:12}, is an extremely difficult regime corresponding to the dissipative range, where the dynamics of $u(t)$ exhibits intermittent burst of transient instabilities, followed by quiescent phases. The parameters are: $\lambda_u= 0.55-1.78i, \lambda_b = 0.4-i, \lambda_{\gamma} = 0.5, \sigma_u = 0.1, \sigma_b = 0.4, \sigma_{\gamma} = 0.5, \epsilon=1$. In this regime, the decaying time scales for $u$ and $\gamma$ are comparable. 
Note that the accuracy of the closure in \eqref{reducedSPEKF} is up to order-$\epsilon$ when the parameters in the full model in \eqref{SPEKF} are all order-one. Since the parameters in Regime I are defined without $\epsilon$ in \cite{bgm:12} and not all of them are order-one, by taking the ratio of the damping coefficients $Re(\lambda_u)=1.2$ and $\lambda_\gamma=20$, the implicit time scale separation is approximately $\epsilon\approx \lambda_u/\lambda_\gamma=0.05$. In regime II, the implicit time scale separation is approximately $\epsilon\approx \lambda_u/\lambda_\gamma=1.1$. 
In these numerical experiments, we apply all the filters with discrete time observations at time $\Delta t=0.5$ and noise covariance $R = 50\%Var(u)$. Here, we numerically compare the full SPEKF solutions with: 
\begin{itemize}
\item The reduced stochastic filter (RSF) which assumes $\alpha = \lambda_u, \sigma_1^2= \sigma_u^2, \sigma_2 =0$ and $\beta = 0$. \item The reduced stochastic filter with additive correction (RSFA) which assumes $\alpha = \lambda_u, \sigma_1^2= \sigma_u^2$, $\sigma_2^2 = \epsilon\sigma_b^2/\lambda_b^2$, $\beta=0$.
\item The reduced SPEKF solutions with white-noise limit parameters \cite{mb:12,gh:13,bm:13}, $\alpha = \lambda_u, \sigma_1^2 = \sigma_u^2, \sigma_2^2 = \epsilon\sigma_b^2/\lambda_b^2, \beta^2 =  \epsilon\sigma_{\gamma}^2/\lambda_{\gamma}^2$. We'll denote this by RSFC, following the notation in \cite{gh:13}. \item The RSPEKF solutions with parameters in \eqref{params}. 
\end{itemize}

In Table~\ref{table1}, we show the average RMS errors, averaged over 20,000 assimilation cycles. In regime I, the accuracy of the filtered mean estimates of SPEKF, RSFC, and RSPEKF are roughly similar.  On the other hand, RSF and RSFA are less accurate, in particular, the average RMS error of RSF is larger than the observation noise error, $\sqrt{R}$. In regime II, RSPEKF has the smallest error, even smaller than SPEKF, followed by RSFC. The linear filters without multiplicative noise, RSF and RSFA, are not accurate at all, their errors are roughly twice the observation noise error.  We do not show the pathwise filtered solutions compared to the true signals since they look very similar to those of Figures~7 and 8 of \cite{gh:13}. Instead, we examine the filter covariance estimates (see Figure~\ref{nonlinearfilter}). Notice that in both regimes, the covariance estimates of both RFSC and RSPEKF are larger than that of SPEKF. The differences between RSFC and SPEKF in Regime II are even more pronounced. The differences between RSPEKF and SPEKF are of order-$\epsilon$ in both regimes, where $\epsilon=0.05$ for regime I and $\epsilon=1.1$ in regime II. The covariance estimates of the other two linear filters, RSF and RSFA, converge to constant solutions, as expected; RSF underestimates the covariance, while RSFA covariance estimates are closer to RSPEKF. 

From these covariance estimates, we cannot conclude which of them over- or under-estimate 
the actual error covariance since we have no access to the optimal filter solutions; even SPEKF solutions are sub-optimal since they are the Gaussian approximation of the first two-moments of \eqref{kushner}. Motivated by the result in Theorem~\ref{thm2}, where the optimal filter guarantees a consistency condition in the sense that the filter covariance estimate matches the actual filter error, we propose the following metric as an empirical measure to determine whether the filter covariance estimates are consistent. 

\begin{defin}{Consistency (of Covariance).}
Let $\tilde x(t)\in\mathbb{R}^n$ and $\tilde S(t)\in\mathbb{R}^{n\times n}$ be a realization of the solution to a filtering problem for which the true signal of the realization is $x(t)\in\mathbb{R}^n$.  The \emph{consistency} of the realization is defined as
\begin{align}\label{consistency} \mathcal{C}(x,\tilde x,\tilde S) =  \Big\langle \frac{1}{n} (x(t)-\tilde{x}(t))^\top\tilde{S}(t)^{-1}(x(t)-\tilde{x}(t))\Big\rangle,\end{align}
where $\langle\cdot\rangle$ denotes temporal average. We say that a filter is \emph{consistent} if $\mathcal{C} = 1$ almost surely (independent of the realization). The filter covariance under(over)estimates the actual error covariance when $\mathcal{C}>1$ ($\mathcal{C}<1$). 
\end{defin}

This metric is simply the signal part of the relative entropy measure of two Gaussian distributions \cite{bm:14}. With this definition, it is obvious that an optimal filter is always consistent. However it is not the only consistent filter and not every consistent filter is accurate (see Appendix D in the electronic supplementary material for trivial examples). 
In parallel to the consistency condition in \eqref{thm2}, this consistency measure is only a necessary (or weak) condition for the covariance to be meaningful. It should be used together with the mean squared error measure. However, this measure has the following useful property: a consistent filter which produces posterior mean estimates close to the true posterior mean estimates also has a covariance close to the true posterior covariance (see Appendix D in the electronic supplementary material for detail). We should point out that although this measure is much weaker than the pattern correlation measure advocated in \cite{bm:14}, we shall see that many suboptimal filters are not even consistent in the sense of Definition~3.1.

\comments{
\begin{figure}
\centering
\includegraphics[width=.45\textwidth]{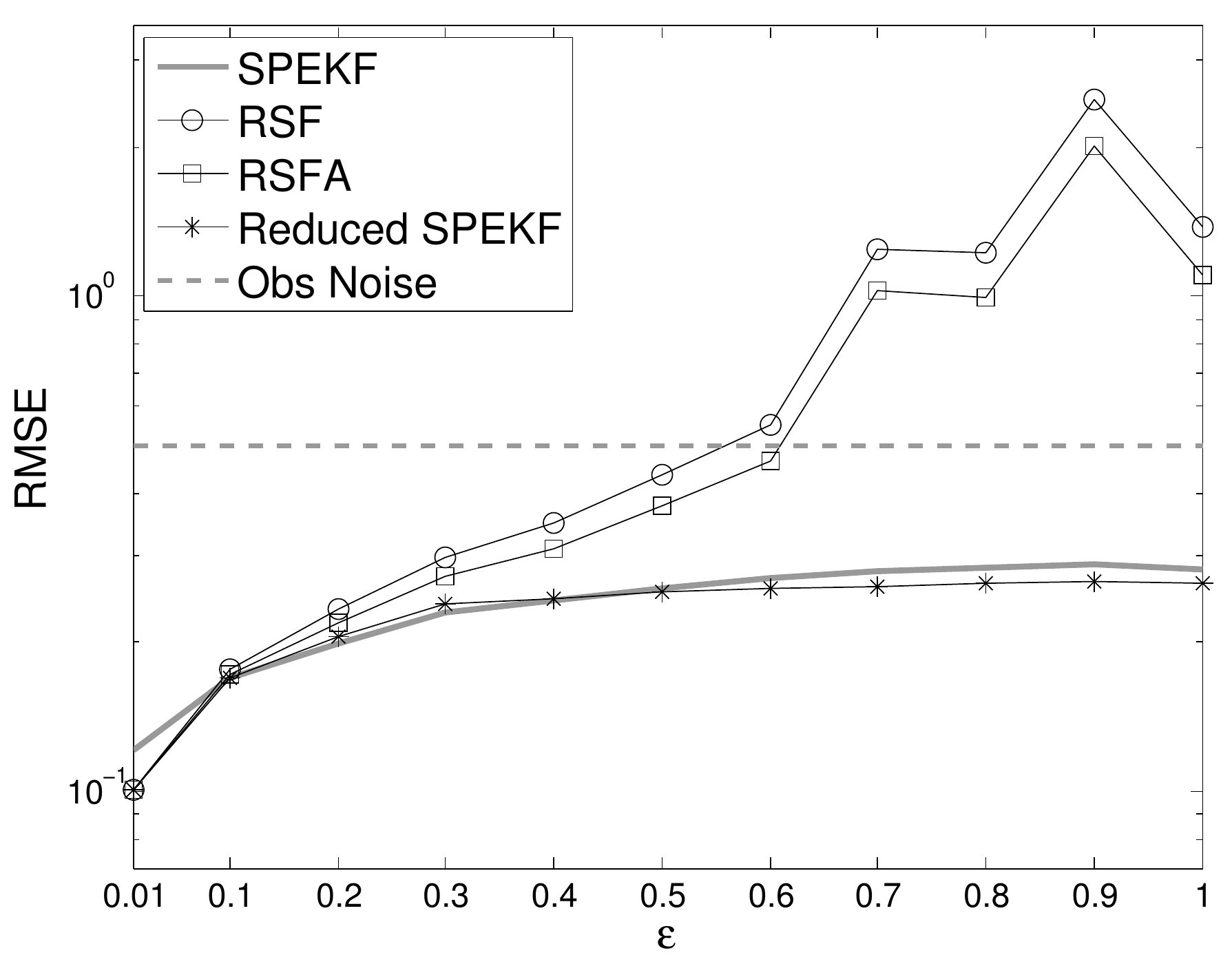}
\includegraphics[width=0.45\textwidth]{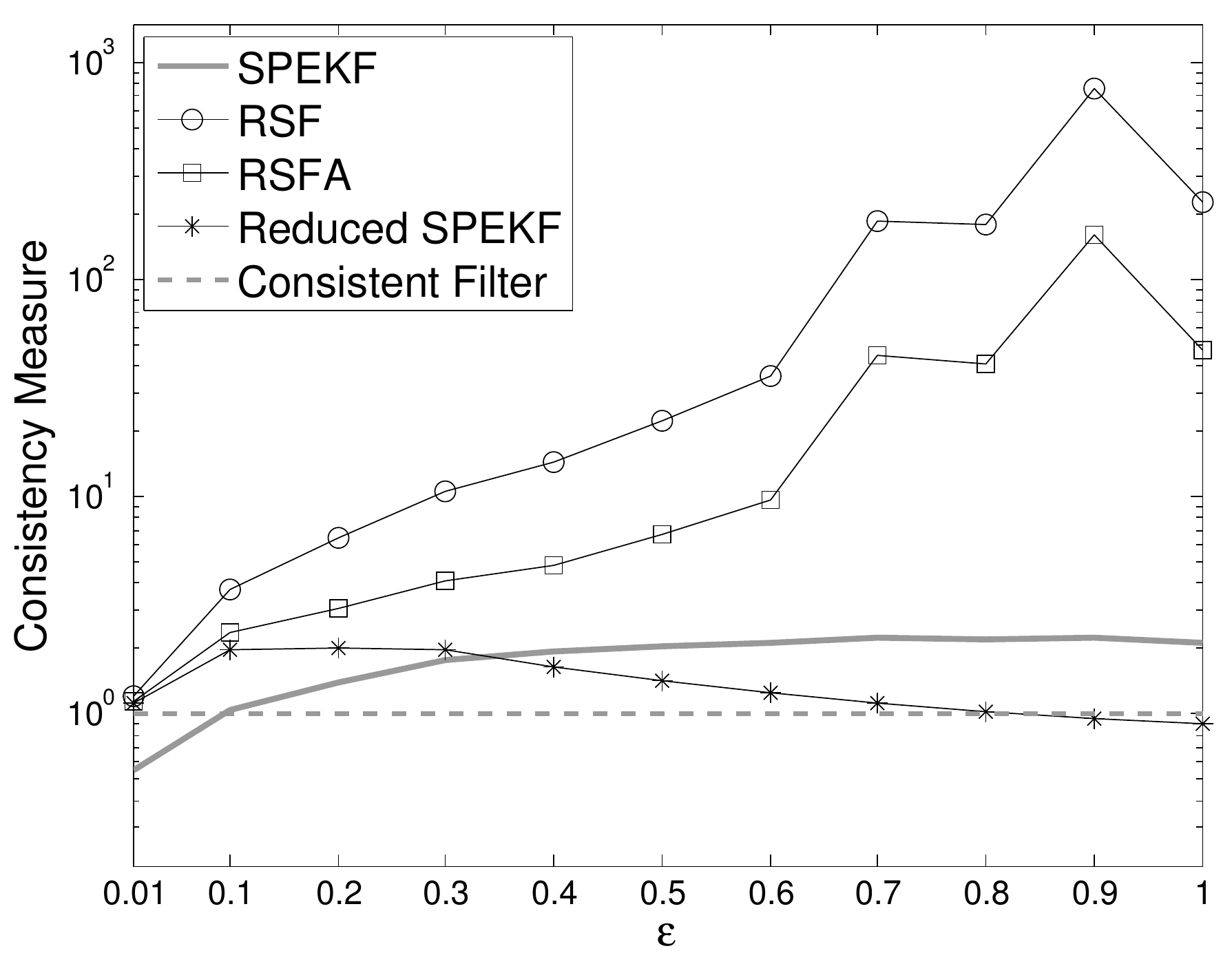}
\caption{\label{nonlinearEx} Filter performance measured in terms of root mean squared error (RMSE, left) and consistency measure (right) for various time-scale separations $\epsilon$ with an integration time step of $dt = 0.05$ and observations at time intervals $\Delta t = 0.5$ with observation noise $R=0.25$.  Results are overaged over $20,000$ observations in Regime II from \cite{gh:13} where $\lambda_u= 0.55, \lambda_b = 0.4, \lambda_{\gamma} = 0.5, \sigma_u = 0.1, \sigma_b = 0.4, \sigma_{\gamma} = 0.5$.  The \emph{SPEKF} solution use the full model from \eqref{SPEKF}, the \emph{Reduced SPEKF} uses the model from \eqref{reducedSPEKF} with the parameters \eqref{params} whereas \emph{RSFA} sets $\beta = 0$ and \emph{RSF} sets $\sigma_2=\beta=0$.}
\end{figure}}

\begin{table}
\caption{Average RMSE (and empirical consistency measure) for various filtered mean estimates in Regimes I and II over 20,000 assimilation cycles for observation time $\Delta t=0.5$ and $R=0.5Var(u)$.}
\begin{center}
\begin{tabular}{|c|c|c|c|c|}
\hline
Scheme & Regime I & Regime II \\ \hline
SPEKF & 0.47 (1.27) & 2.29 (11.50)\\
RSF & 0.84 (9.42) & 10.53 ($1.22\times 10^4$)\\
RSFA & 0.54 (1.52)& 9.54 (106.76)\\
RSFC & 0.47 (0.90)& 3.00 (0.60)\\
RSPEKF & 0.47 (1.10)& 2.02 (3.37)\\ 
$\sqrt{R}$ & 0.5866 & 5.2592 \\ \hline
\end{tabular}
\end{center}
\label{table1}
\end{table}%

\begin{figure}
\includegraphics[width=.45\textwidth]{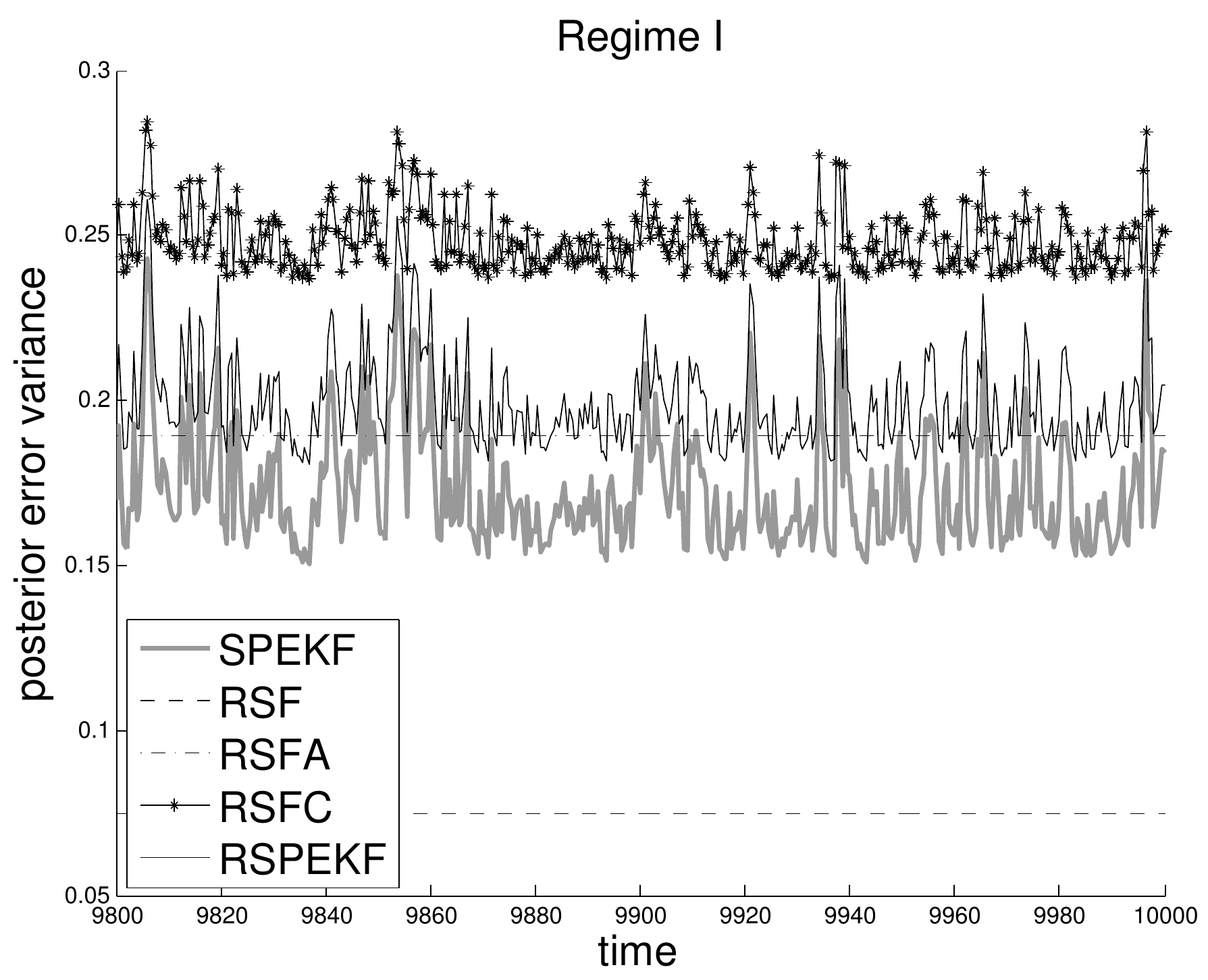}
\includegraphics[width=.45\textwidth]{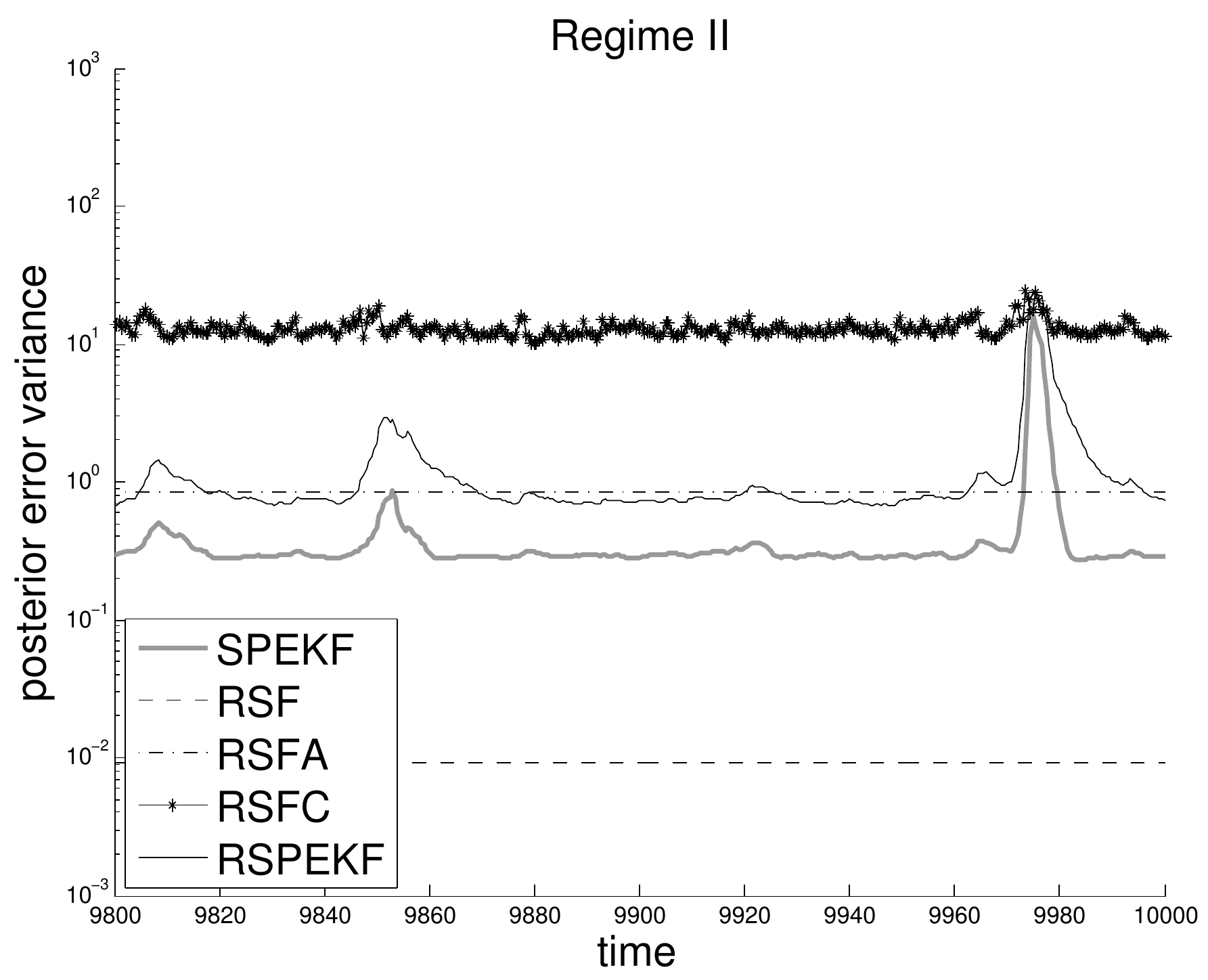}
\caption{\label{nonlinearfilter}Posterior error covariance estimates, corresponding to the mean solutions in Table~1, as functions of time. In this numerical simulations, we consider observation time interval $\Delta t=0.5$ and $R=0.5Var(u)$.}
\end{figure}
 
In Table~\ref{table1}, we record the numerically computed empirical consistency for the corresponding filtering experiments. The consistency results show that in regime I, almost all filtering methods except RSFC are underestimating the actual error covariance. In this regime, both RSFC and RSPEKF produce the most consistent covariance estimates. In Regime II, both linear filters (RSF and RSFA) significantly underestimate the actual error covariance. RSFA improves both mean and covariance estimate (it reduces the consistency measure from $\mathcal{C}\approx 10^{4}$ to $10^2$) by an additive covariance inflation factor $\sigma_2^2=\epsilon\sigma_b^2/\lambda_b^2$. In this regime, SPEKF, which produces reasonably accurate filtered solutions, also underestimates the actual error covariance ($\mathcal{C}\approx 11.5$). Even RSPEKF underestimates the covariance ($\mathcal{C}\approx 3.37$). We suspect that the underestimation of these covariances in SPEKF and RSPEKF are due to the Gaussian closure approximation. Additionally, the underestimation of the actual error covariance in the full SPEKF solutions can be attributed to a combination of the following issues: (1) the full SPEKF has sparse observations of only $u$; (2) the prior statistical solutions of the full SPEKF involve quadrature approximation of various integral terms.

\subsection{Numerical Experiments: Assessing the predictive skill of the covariance estimates}
In this section we compare the evolution of the covariance of the stochastic variable $u$ in the nonlinear model in \eqref{SPEKF} with
the evolution of the covariance of $U$ in the approximate model in \eqref{reducedSPEKF} for the following parameter sets:
\begin{itemize}
\item The RSFC or white-noise limit parameters \cite{mb:12,gh:13,bm:13}.
\item The RSPEKF parameters obtained in \eqref{params},
\end{itemize}
In this numerical experiment, we solve the evolution of the true covariance of $u$ and the two reduced models analytically as in \cite{mh:12} and in the Appendix of \cite{gh:13}, respectively.  We assume an independent Gaussian initial condition, $p(u,b,\gamma) = p_G(u)p_G(b)p_G(\gamma)$, where,
\[ p_G(u) = \mathcal{N}(0,\sigma_u^2/(2Re(\lambda_u))) \hspace{10pt}\textup{and}\hspace{10pt} p_G(b) = \mathcal{N}(0,\sigma_b^2/(2Re(\lambda_b))) \hspace{10pt}\textup{and}\hspace{10pt} p_G(\gamma) = \mathcal{N}(0,\sigma_\gamma^2/(2\lambda_\gamma)). \] 
Each model is then used to evolve these initial conditions forward in time and the resulting covariances are shown in Figure~\ref{nonlinearprior}. Notice that in both regimes, the covariance estimates from the parameters of RSFC are not accurate at all. In regime I, the absolute error of the final covariance estimates shown in Figure~\ref{nonlinearprior} is about $|Var(u)-Var(U)| = .7171\approx 14\epsilon$ for $\epsilon=0.05$. In regime~II, the covariance estimate of the RSFC is unstable since the stability condition, 
\begin{align}
\Xi_2 = -2\lambda_u + \epsilon \frac{2\sigma_\gamma^2}{\lambda_\gamma^2}<0,
\end{align}
is not satisfied. The order-$\epsilon^2$ correction terms in \eqref{params} yield significant improvement in both regimes. In regime I, the absolute error of the covariance estimate from RSPEKF, $|Var(u)-Var(U)| = .3 = 6\epsilon$, is much smaller than that of RSFC. In regime II, the RSPEKF correction terms ensure the stability of the prior covariance estimate, that is, it provides the following stability condition, 
\begin{align}
\tilde{\Xi}_2 = -2\lambda_u + \epsilon \frac{2\sigma_\gamma^2}{\lambda_\gamma(\lambda_\gamma + \epsilon \lambda_u)}<0.
\end{align}
Moreover, the corresponding absolute error in this regime is $|Var(u)-Var(U)| = .5979 = .5\epsilon$, where $\epsilon=1.1$.

\begin{figure}
\includegraphics[width=.45\textwidth]{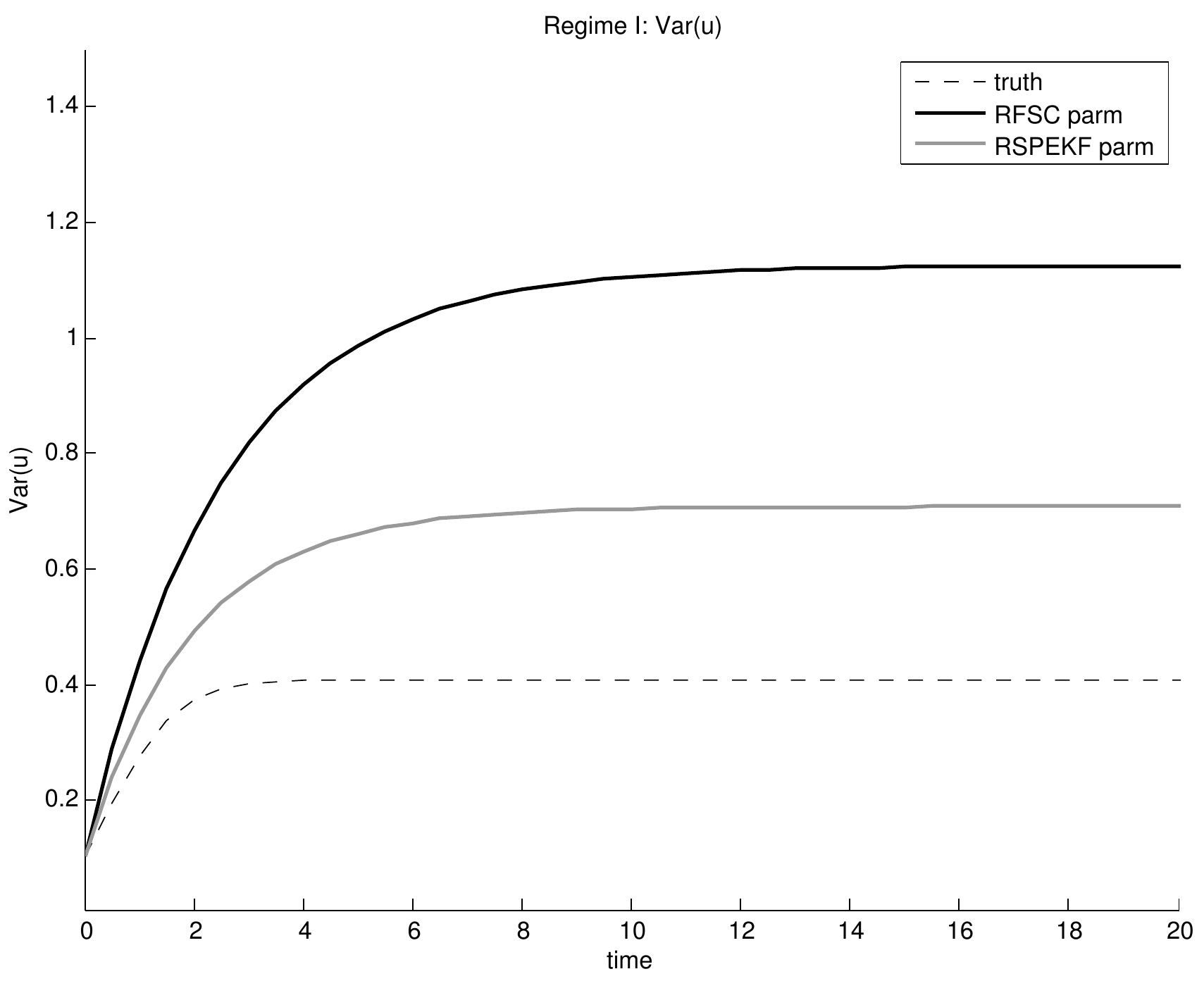}
\includegraphics[width=.45\textwidth]{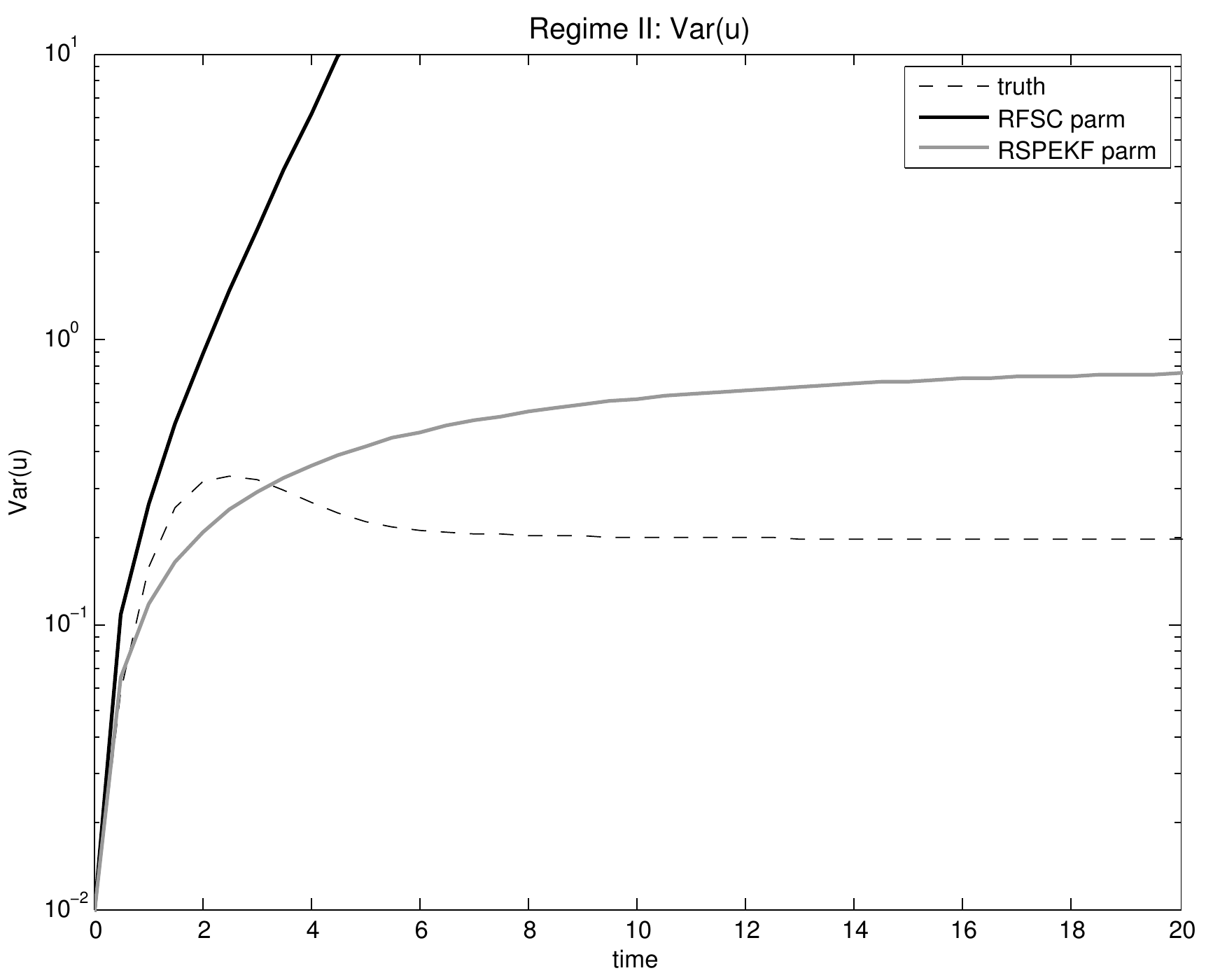}
\caption{\label{nonlinearprior}Covariance solutions of $u$ for the true model in \eqref{SPEKF}, the reduced model in \eqref{reducedSPEKF} with parameters specified as in RSFC and RSPEKF.}
\end{figure}

The RSPEKF example shows that there exists a parameter set, given by \eqref{params}, that produces reasonably accurate and consistent filtered estimates as well as relatively accurate covariance solutions, up to order-$\epsilon$.  
Interestingly, the stochastic noise terms in the moment equations of \eqref{twomomentSPEKF} and \eqref{twomomentReducedSPEKF} do not effect the determination of the parameters in \eqref{params}. In fact, we can obtain the same stochastic parameters by applying the asymptotic expansion to the first two-moments of the marginal distribution of $u$ which solves the deterministic part of SPDE in \eqref{kushner} (which is simply the Fokker-Planck equation).  We also note that by a straightforward (but tedious and lengthy) calculation, one can verify that, for this particular example, the parameters \eqref{params} also match the third moments of SPEKF and RSPEKF, when one does not apply the Gaussian closure in \eqref{fullSPEKFsolution}. The coefficients in the higher-order moment equations still satisfy the same constraints which yield \eqref{params}. Thus, as long as no model error is committed in the observation operator $h$ in the reduced filter of \eqref{reducedSPEKF}, the stochastic term in \eqref{kushner} will not produce extra constraints.

In real applications, however, it is typically difficult to find an appropriate ansatz which can give both accurate filter solutions and accurate long term statistical prediction with the same parameters. In this case, it is possible to have more constraints than the number parameters in the reduced model. To illustrate this point, suppose we choose the following stochastic ansatz for the reduced filter model,
\begin{align}
dU = -\alpha U\,dt + \sigma_1 dW_u + \sigma_2 dW_b,\label{badansatz}
\end{align} 
ignoring the multiplicative noise component in \eqref{reducedSPEKF}. By comparing the Kalman-Bucy solutions of \eqref{badansatz} to the Gaussian closure moments in \eqref{twomomentSPEKF}, it is clear that the accuracy of the filtered mean and covariance estimates are not within order-$\epsilon^2$. With this stochastic ansatz, we can fit the equilibrium variance and correlation time (with the MSM method as mentioned in Corollary~\ref{thm3}) to obtain $\alpha=0.7683-0.9971i$ and $\sigma_1^2+\sigma_2^2=2.1147$ (shown with the `+' sign in Figure~\ref{fig4} for Regime I). In Figure~\ref{fig4}, we compare the average RMS errors and consistency measure of filtering noisy observations of \eqref{SPEKF}, with the stochastic ansatz in \eqref{badansatz} as the filter model, for a wide range of parameters. We set the frequency parameter, $Im\{\alpha\}=-0.9971i$ to be the exactly the value determined by MSM. In these numerical simulations, the filtered performance is quantified over 20,000 assimilation cycles for observation noise $R=0.5Var(u)$ and time $\Delta t=0.5$. Notice that the MSM parameters do not produce the best filtered solutions; they yield an average RMSE close to 0.85 and consistency measure close to 1.4. Moreover, in this parameter range, the average RMS error is much larger than 0.54 which was produced by RSFA(see Table~\ref{table1}), which is also using the ansatz in \eqref{badansatz} with parameters in \eqref{params}, except for $\beta=0$. Conversely, the parameters associated with RSFA in Table~1 produce inaccurate equilibrium statistics; the correlation time and the equilibrium variance are significantly underestimated by 52\% and 83\%, respectively. This example illustrates the importance of having an appropriate ansatz.  Moreover, when the ansatz is not appropriate, parameters which are chosen to give good equilibrium statistics may give poor filter performance and vice versa.

\begin{figure}
\includegraphics[width=.45\textwidth]{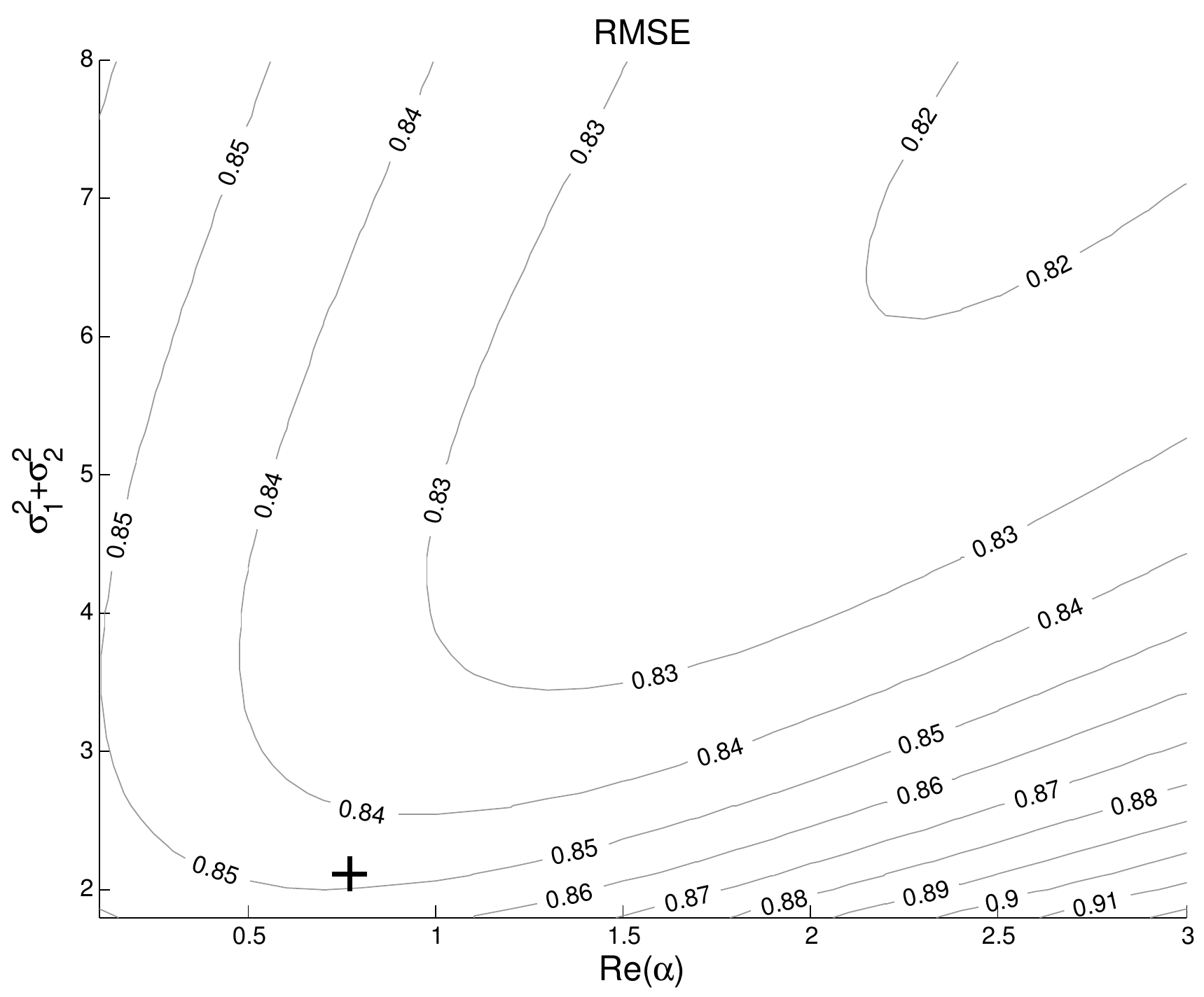}
\includegraphics[width=.45\textwidth]{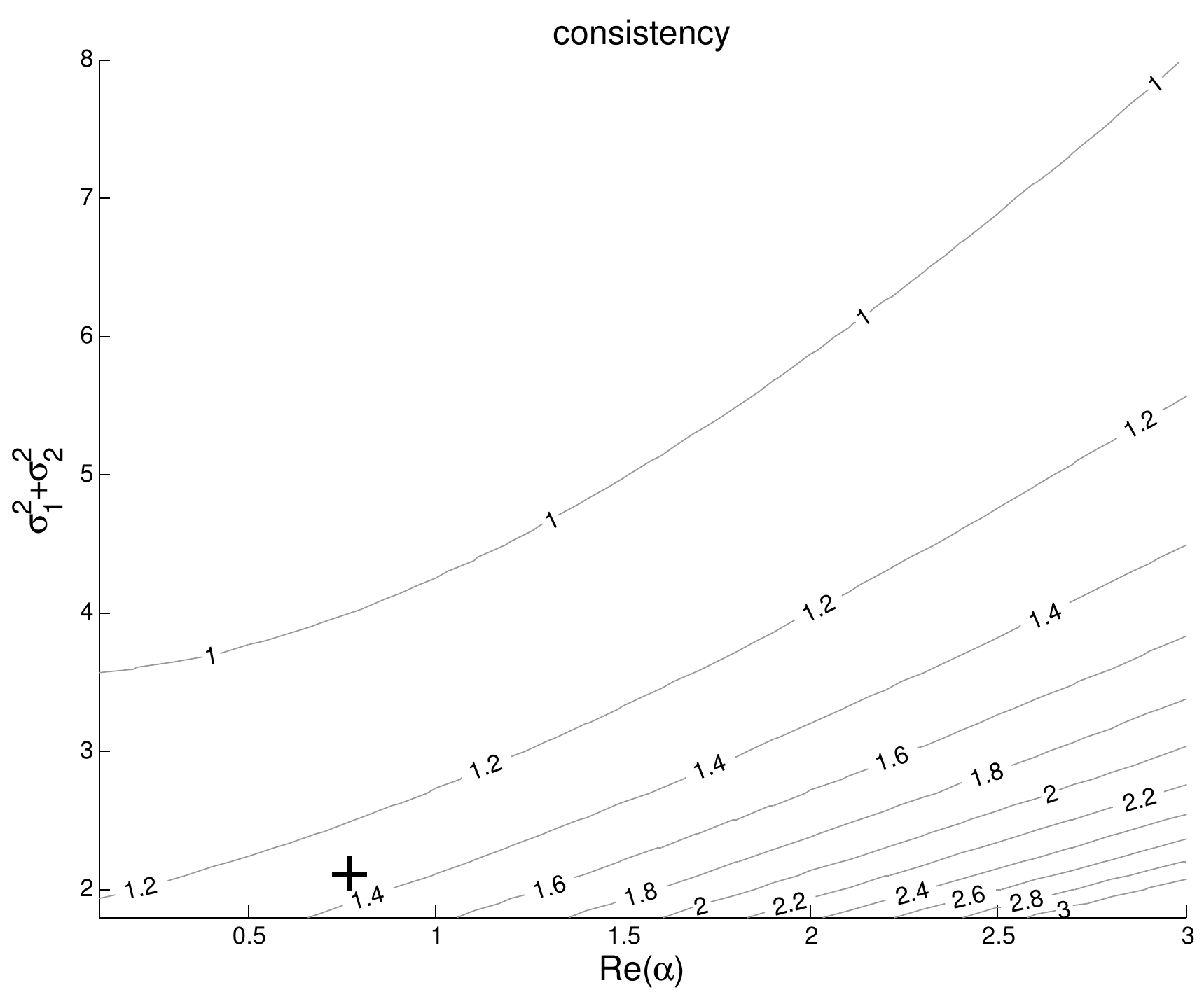}
\caption{\label{fig4} Contour plot of the average RMS error and the weak consistency measure for filtering noisy observations in \eqref{SPEKF} with the reduced filter model in \eqref{badansatz} for various parameters $Re(\alpha)$ and $\sigma_1^2+\sigma_2^2$ and fixed frequency $Im(\alpha)$ determined by the equilibrium statistics through MSM method (see Corollary~\ref{thm3}) in Regime I. The `+' sign denotes the parameters determined by the equilibrium statistics through MSM method.}
\end{figure}

\section{Stochastic parameterizations for the two layer Lorenz-96 model}\label{L96}

In the previous sections we were able to obtain an optimal stochastic parameterization ansatz which compensates for the unresolved scales, because the full dynamics are known and the two test problems are quite simple.  Our results showed that it is critical to use the correct stochastic ansatz in order to simultaneously obtain accurate filtering and accurate equilibrium statistical estimates. In practical applications, it is rather difficult to derive the correct parametric form for the reduced model, especially when the dynamics of the unresolved variables are not completely known. Motivated by our results in Sections~2 and 3, as well as the normal forms for reduced climate models deduced in \cite{mtv:01,mfc:09}, we propose a stochastic parameterization ansatz which includes a linear damping term and a combined, additive and multiplicative, stochastic forcing to account for the unresolved scales in filtering nonlinear multiscale problems. 

In this section, we present numerical results from filtering the two-layer Lorenz-96 model \cite{lorenz:96}, which has been widely used for testing stochastic parameterization methods \cite{,fev:04,wilks:05,cev:08,kh:12}. The two-layer Lorenz-96 model is an $N(J+1)$-dimensional ODE given by,
\begin{align}\label{lor96}\frac{dx_i}{dt} &= x_{i-1}(x_{i+1}-x_{i-2}) - x_i + F + h_x\sum_{j=(i-1)J+1}^{iJ} y_{j}, \nonumber \\
 \frac{dy_{j}}{dt} &= \frac{1}{\epsilon}\big(a y_{j+1}(y_{j-1}-y_{j+2}) - y_{j} + h_y x_{\textup{ceil}(i/J)} \big),
\end{align}
where $\vec{x}=(x_i)$ and $\vec{y}=(y_j)$ are vectors in $\mathbb{R}^{N}$ and $\mathbb{R}^{NJ}$ respectively and the subscript $i$ is taken modulo $N$ and $j$ is taken modulo $NJ$. To generate the observations, we integrate this model using the Runge-Kutta method (RK4) with a time step $\delta t$ and take noisy observations $\vec{z}_k$ at discrete times $t_k$ with spacing $\Delta t=t_{k+1}-t_k$ given by,
\begin{align}
\vec{z}_k = h(\vec{x}(t_k)) + \eta_k = \vec{x}(t_k) + \eta_k, \quad \eta_k\sim\mathcal{N}(0,R),\label{L96obs}
\end{align}
where $\vec{z}_k\in\mathbb{R}^M$ and $R$ is a symmetric positive definite $M\times M$ matrix.
The main goal of this section is to show that when the ``correct" ansatz is known, a natural way to estimate the parameters is \emph{online} (as part of the filtering procedure). To achieve this, we separate the discussion into three subsections. In the first subsection, we will provide a short review of the online parameterization method that was introduced in \cite{bs:13} (we accompany this section with a more detail discussion in the Appendix E in the electronic supplementary material). Second, we compare the filter performance of various choices of ansatz with the online parameter estimation scheme. The main point here is to empirically find the ``correct" ansatz since the complexity of the full model makes it difficult to analytically derive such an  ansatz. In the third part of this section, we compare the filter and the equilibrium statistical estimates of the online parameterization method with an offline stochastic parameter estimation method proposed in \cite{wilks:05,amp:13}. To make a fair comparison, we will use the same stochastic parametric form (ansatz). 

\subsection{Review of the Online Parameter Estimation Method}

We consider the following reduced stochastic model to approximate the filtering problem in \eqref{lor96}, \eqref{L96obs}, 
\begin{align}\label{lor96reduced}
\frac{dx_i}{dt} &= x_{i-1}(x_{i+1}-x_{i-2}) - a x_i + F + \Big(- \alpha x_i + \sum_{j=1}^N \sigma_{ij} \dot{W}_j + \sum_{j=1}^N \beta_{ij}x_j \circ \dot{V}_j \Big). 
\end{align}

As we pointed out earlier, such a stochastic parameterization was motivated by our results in Sections~2 and 3, as well as by earlier work that suggested a normal form for reduced climate modeling \cite{mtv:01,mfc:09}. The filter model in \eqref{lor96reduced} is simply the one-layer Lorenz-96 model augmented with an additional linear damping term $\alpha$, an additive noise term which amplitude is given by the matrix $\sigma = (\sigma_{ij})$ and a multiplicative noise term with coefficient $\beta=(\beta_{ij})$. The notations $\dot{W}, \dot{V}$ denote standard i.i.d. white noise. In the remainder of this paper, we will set $\beta=0$ since the multiplicative noise seems to play little role based on the study in \cite{amp:13} and we suspect that an online estimation method for the multiplicative noise amplitude may involve a more expensive MCMC algorithm, which is beyond the scope of this paper.

The core of the online parameter estimation scheme considered in this paper is the EnKF algorithm with an adaptive noise estimation scheme proposed in \cite{bs:13}. For the application in this paper, we will combine this scheme with the classical online state augmentation method \cite{friedland:69} to obtain the deterministic parameters ($\alpha$ in our case). Generally speaking, the algorithm consists of two steps: The first step is to apply the standard EnKF method to estimate the augmented state-parameter variables, $(\vec{x},\alpha)$, assuming that the parameter dynamics are the persistent model, $d{\alpha}/dt=0$, as in \cite{friedland:69}. The second step is to use the zero-lag and one-lag covariances of the resulting innovation vectors to obtain an estimate for $Q=\sigma\sigma^\top$ and the observation noise covariance $R$. This second step was originally proposed in \cite{mehra:72} for linear problems and extended to EnKF framework in \cite{bs:13}. See Appendix E in the electronic supplementary material for implementation detail of this online noise estimation method.  By only using the zero-lag and one-lag covariances, the method of \cite{bs:13} can estimate at most $M^2$ parameters, where $M$ is the dimension of the observation (this is only a necessary condition and further observability obstructions are possible).  When $M=N$ this means that the entire matrix $Q$ can usually be estimated, and this version of the algorithm is used in Section~4(b).  However, when the observations are sparse we must parameterize the $Q$ matrix.  In Section~4(c) we consider a sparse observation where only half of the slow variables are observed, and because of the spatial homogeneity of the problem we introduce a cyclic parameterization of $Q$.  The idea of the cyclic parameterization is that the covariance in the model error should only depend on the distance between the slow variables, so for example $Q_{12}=Q_{23}=\cdots=Q_{N1}$.  The cyclic parameterization reduces the number of parameters in $Q$ from $N^2$ to $\textup{ceil}(N/2)$ and can be used whenever $\textup{ceil}(N/2) \leq M^2$; the full details are described in detail in Appendix E.
We should point out that this observability issue can also be mitigated with an alternative algorithm in the EnKF framework, which uses more than one-lag covariances of the innovation vectors to estimate $Q$ and $R$ \cite{hmm:14}.

\subsection{The role of damping and stochastic forcing in the reduced Lorenz-96 model}\label{l96params}

In this section, we compare multiple choices of ansatz which are reduced forms of \eqref{lor96reduced} across a wide range of time-scale separations, $2^{-7}\leq \epsilon\leq 1$. The goal of this example is to compare the filtering skill of various stochastic parameterization ansatz when the unresolved scales dynamics in \eqref{lor96} is ignored.  We generate the truth data from \eqref{lor96} and observation data from \eqref{L96obs} using a short observation time $\Delta t = \min\{0.01,\epsilon/10\}$ and an integration time step $\delta t = \frac{\Delta t}{10}$.  The $\epsilon$ dependence in the time step is necessary due to the stiffness of the problem as $\epsilon\to 0$. We use the parameters from Regime 2 of \cite{kh:12} where $N=9$, $J=8$, $a = 1$, $F=10$, $R = 0.1 \times {I}_{N\times N}$, $h_x = -0.1$ and $h_y = 1$.  Note that there are $81$ total variables, only $9$ of which are observed. For diagnostic purpose, we first consider the idealized case where the full model \eqref{lor96} and all the parameters are known exactly.  We apply an ensemble Kalman filter based on the full ensemble transform \cite{bishop:01} with $162$ ensemble members (double the total state variables, $2N(J+1)$), each of which are integrated 10 times between observations.  We will refer to this scheme as the \emph{Full Model}; see Figure \ref{2layerL96} for the average RMSE and consistency measure of the filtered solutions based on this full model.  

Numerically, the reduced filter models are all integrated with $\delta t = \Delta t$, thus using 10 times fewer integration steps than the \emph{Full Model}, since the numerical stiffness disappears when the fast processes are removed. Moreover, since the reduced models have significantly fewer variables, $N=9$, we consider an ensemble of $2N =18$ members (or 20 members, when $\alpha$ is estimated), which is much fewer than the $162$ ensemble members used for the \emph{Full Model}. In this section all $N=9$ slow variables are observed which allows us to estimate the full $9\times 9$ matrix $Q$, however this requires a long time series. Thus, we use a series of 80,000 observations and each filter uses the first 20,000 observations to estimate their parameters so that only the last 60,000 observations are used in the computation of the averages in the RMSE and the Consistency (shown in Figure \ref{2layerL96}).

To evaluate the effectiveness of the additional damping and additive noise in the reduced model we consider four separate cases.  First, we set $\alpha = 0$ and $\sigma = 0$, which we call the \emph{reduced deterministic filter (RDF)} since the slow variables are unchanged and the fast variables have simply been truncated.  As shown in Figure \ref{2layerL96}, the RDF has very poor performance for all but extremely small values of $\epsilon$.  In fact for $\epsilon \geq 0.125$ the truncated model's filtered estimate is actually worse than the observation.  Next we consider the \emph{reduced deterministic filter with an additional damping correction (RDFD)} where $\sigma = 0$ and the \emph{reduced stochastic filter with an additive noise correction (RSFA)} where $\alpha = 0$.  As shown in Figure \ref{2layerL96} the damping improves the filter accuracy for small $\epsilon$ whereas the additive noise stochastic forcing improves the filter accuracy for large $\epsilon$.  Finally we combine both damping and additive stochastic forcing in RSFAD, which shows the improvement that is achievable with this simple stochastic parameterization of model error compared to simply neglecting unresolved scales.   

Of course, estimating the accuracy of the posterior mean filter solutions is only part of filter performance, the filter also quantifies the uncertainty in the mean state estimate $\tilde x(t)$ via the estimated covariance matrix $\tilde S(t)$.  We would like to know if the filter is doing a good job of determining $\tilde S(t)$, however judging the accuracy of $\tilde S(t)$ is difficult since we do not have access to the optimal filter (even our \emph{Full Model} simply uses a Gaussian update).  Thus we compute the empirical measure of filter consistency introduced in Section \ref{nonlinear theory}.  As shown in Section \ref{nonlinear theory} and Appendix D, the consistency quantifies the degree to which the actual error covariance of the suboptimal estimate $\tilde x(t)$ agrees with the filtered covariance estimate, $\tilde S(t)$.  Moreover, if $\tilde x(t)$ is a good estimate of the true posterior mean, consistency close to one implies that $\tilde S(t)$ is close to the true posterior covariance.  
In Figure \ref{2layerL96} we show that consistency is significantly improved by the additive noise term characterized by the parameters $\sigma_{ij}$.  When these stochastic parameters are included, the reduced model is consistent with $\mathcal{C}\approx 1$, compared to the order $10^4$ underestimation of the actual error covariance without this stochastic parameterization.

\begin{figure}
\centering
\includegraphics[width=.45\textwidth]{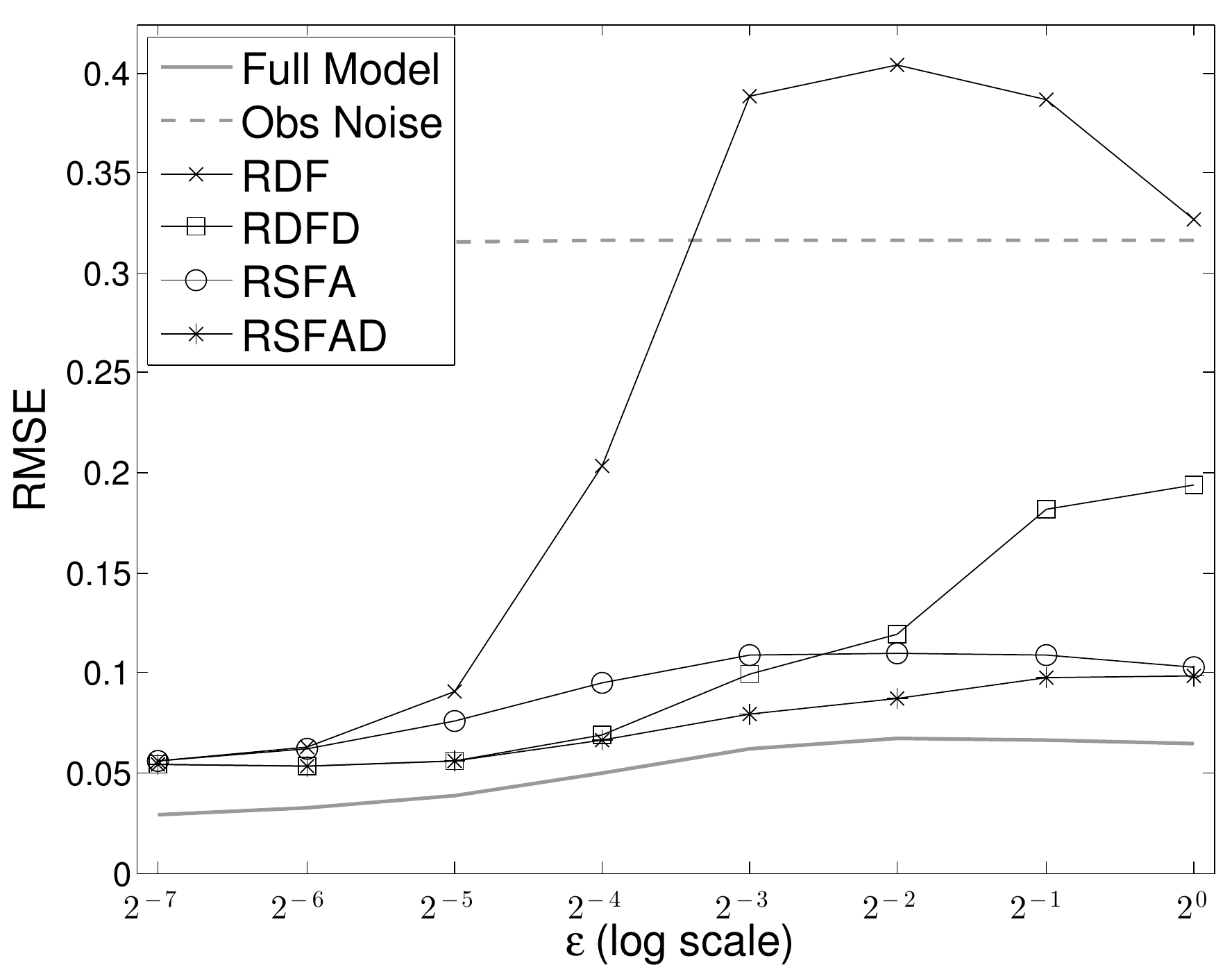}
\includegraphics[width=0.45\textwidth]{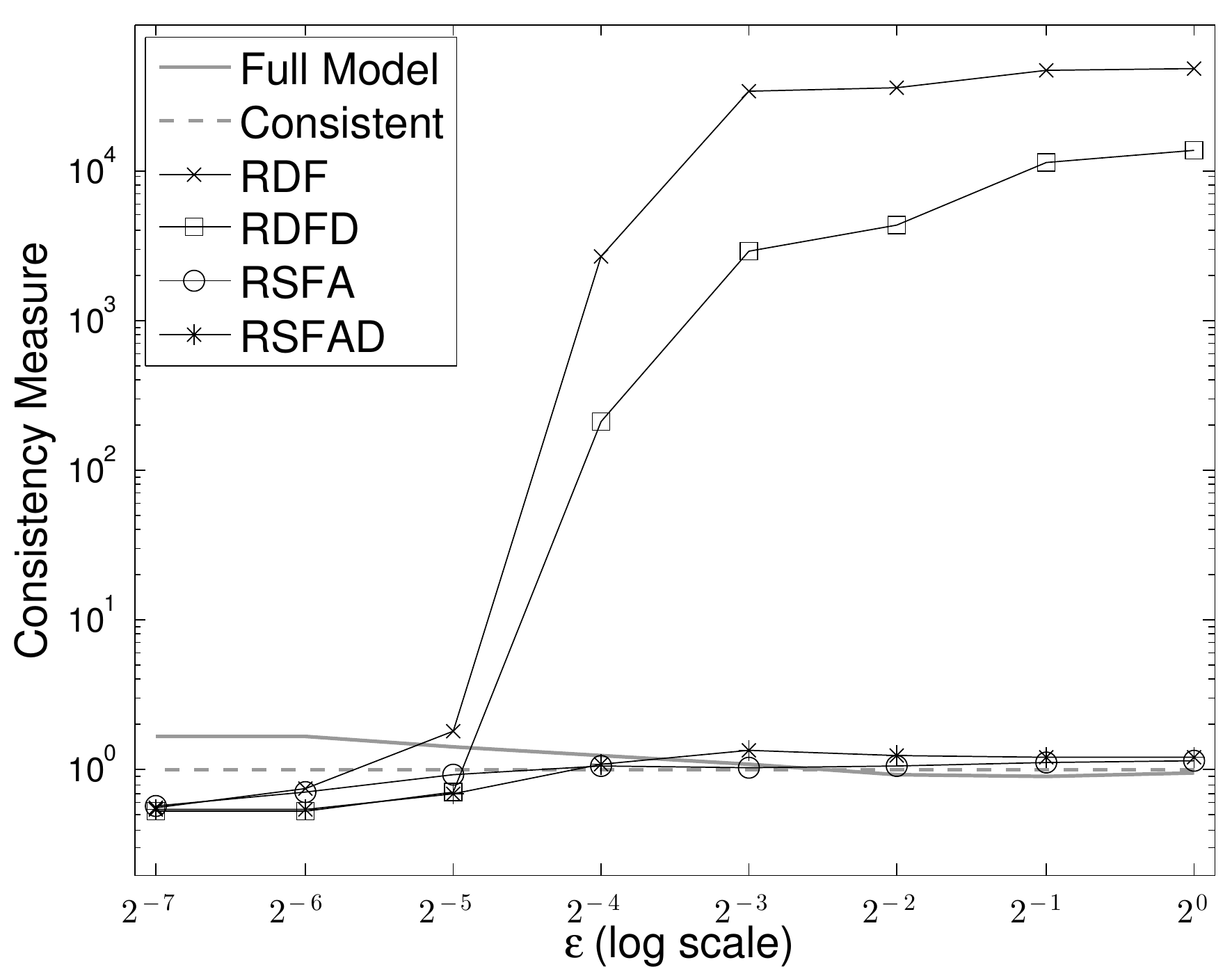}
\caption{\label{2layerL96} Filter performance measured in terms of root mean squared error (RMSE, left) and consistency measure (right) for various time-scale separations $\epsilon$ with an integration time step of $\delta t = \frac{\Delta t}{10}$ and observations at time intervals $\Delta t = \min\{0.01,\epsilon/10\}$ with observation noise $R=0.1$.  Results are overaged over $60,000$ assimilation cycles in Regime 2 from \cite{kh:12} where $N=9$, $J=8$, $a = 1$, $F=10$, $R = 0.1$, $h_x = -0.8$ and $h_y = 1$.  All filters use the ensemble transform Kalman filter with dynamics integrated with the Runge-Kutta (RK4) method.  The \emph{Full Model} filter uses \eqref{lor96}, the same model used to generate the data.  The remaining models use \eqref{lor96reduced} where \emph{RDF} sets $\sigma_{ij} = \alpha  = 0$, \emph{RDFD} sets $\sigma_{ij}=0$, \emph{RSFA} sets $\alpha = 0$ and \emph{RSFAD} fits both parameters simultaneously.}
\end{figure}

\subsection{Comparison of online and offline stochastic parameterization methods}\label{l96compare}

In this section we compare the online parameter estimation scheme in Section~4(a) with the linear regression based offline scheme from \cite{amp:13}. We will consider their parameter regime, which can be written as \eqref{lor96} with parameters $N=8$, $J=32$, $\epsilon=0.25$, $F=20$, $a=10$, $h_x=-0.4$, and $h_y=0.1$. Here, we only consider the regime of \cite{amp:13} with the smaller time-scale separation ($c= \frac{1}{\epsilon}=4$ in their parameters).  In this section we will vary the observation time $\Delta t$ and we use an integration time step of $\delta t= 0.001$ for the full model (to avoid numerical instability) and all the reduced models will use an integration step of $\delta t = 0.005$.  We consider a more challenging regime for the stochastic parameterization with sparse observations of every other grid point of the slow variables so that $M=4$. Each observed variable is perturbed by a mean zero Gaussian noise with variance $0.1$ so that the covariance matrix of the observation noise is $R=0.1\times I_{M\times M}$.  The filter performance is assessed with the average RMSE and the consistency measure, computed over all $N=8$ slow variables and 60,000 assimilation cycles.

We also consider one of the stochastic parameterization ansatz that was studied in \cite{amp:13}, which we refer to as \emph{Cubic+AR(1)},
\begin{align}\label{ArnoldReduced}\frac{dx_i}{dt} &= x_{i-1}(x_{i+1}-x_{i-2}) -  x_i + F - \Big(b_0 + b_1 x_i + b_2 x_i^2 + b_3 x_i^3 + e_i(t) \Big), \nonumber \\
e_i(t) &= \phi e_i(t-\delta t) + \hat{\sigma}(1-\phi^2) z_i(t), \end{align}
where $b_j, \phi, \hat\sigma$ are scalars to be determined from the data and $z_i$ denotes standard i.i.d. white noise.
This model fits the bias component of the model error to a cubic polynomial and the random component of the model error to an AR(1) model. Following \cite{amp:13}, we integrate this model with the stochastic integration method described in \cite{hp:06} with integration step $\delta t = 0.005$ and hold the stochastic term, $e_i(t)$, constant over each integration step. 

The parameters for \eqref{ArnoldReduced} are found \emph{offline} in \cite{amp:13} from a noiseless time series of the slow variables $\{x_i\}_{i=1}^8$. In particular, given a training time series of $x_i(t)$, the offline estimation first constructs
\begin{align} 
U(x_i,t) =  x_{i-1}(t)(x_{i+1}(t)-x_{i-2}(t)) - x_i(t) + F - \left( \frac{x_i(t+\delta t)-x_i(t)}{\delta t} \right), \nonumber \end{align}
which represents the model error from using the truncated one-layer Lorenz-96 model. The errors $U$ are then fit to the cubic polynomial, $U(x_i,t) \approx b_0 + b_1 x_i(t) + b_2 x_i(t)^2 + b_3 x_i(t)^3$ using a standard least squares method. Finally, the residual, $e_i(t) = U(x_i,t) - (b_0 + b_1 x_i(t) + b_2 x_i(t)^2 + b_3 x_i(t)^3)$, is fit with an AR(1) stochastic model to find the parameters $\phi$ and $\hat \sigma$. Since the model is spatially homogeneous, we fit a single set of parameters for all $i$.  
The parameters of \cite{amp:13} are $b_0 = -0.198$, $b_1=0.575$, $b_2=-0.0055$, $b_3=-0.000223$, $\phi = .993$, $\hat \sigma = 2.12$, which we verified using the above procedure, and we use these parameters for the \emph{Cubic+AR(1)} reduced model. We found that the filtered solutions with this model diverge catastrophically (the average RMSE goes to numerical infinity) for the sparse observation. For the fully observed slow dynamics case, we found that the average RMSE of this stochastic parameterization is slightly below the observation error but they were no where close to that of the \emph{Full Model} or the other reduced models considered.

To make a fair comparison between the online and offline parameter estimation schemes, we consider a simplified version of the parametric form in \eqref{ArnoldReduced} with $b_0=b_2=b_3=\phi=0$. This is equivalent to using \eqref{lor96reduced} with a diagonal diffusion matrix, $\sigma_{ij} = \hat{\sigma}\delta(i-j)$. In this sense, both resulting filter models will have only two parameters in their stochastic parameterization, namely $\alpha=b_1$ and $\hat{\sigma}$. For the \emph{Offline Fit}, we obtain the parameters with the same linear regression based offline estimation technique described above as in \cite{wilks:05,amp:13} by fitting to a large data set of $x_i(t)$ ($2\times10^5$ time steps); the resulting parameters are $\alpha = b_1 = 0.481$ and $\hat \sigma = 2.19$. In order to produce online estimates of the reduced model parameters, we ran the adaptive EnKF described in Appendix E using the RSFAD reduced model of \eqref{lor96reduced} on $10^4$ noisy observation of $M=4$ of the $N=8$ slow variables.  The RSFAD scheme estimates the parameters $\alpha$ and $\sigma_{ij}$ on-the-fly, as described in Appendix E, using the cyclic parameterization of $\sigma_{ij}$.  We define the \emph{Online Fit} reduced model by taking $\alpha$ and $\hat{\sigma} = \frac{1}{8}\sum_{i=1}^8 \sigma_{ii}$ from the RSFAD scheme. The parameters from \emph{Online Fit} and \emph{Offline Fit} are shown in Figure \ref{2layerL96comp}. We should emphasize that the parameter estimation scheme of \cite{amp:13} takes place offline, using a noiseless data set of, $x_i(t)$, and fits the deterministic and stochastic parts of the model error separately.  In contrast, the online data assimilation scheme of \cite{bs:13} with RSFAD uses a much shorter and spatially sparse time series of noisy observations without knowing the noise error covariance, $R$, and simultaneously updates the mean and covariance parameters of the reduced stochastic model \eqref{lor96reduced}. 

\begin{figure}
\centering
\includegraphics[width=.32\textwidth]{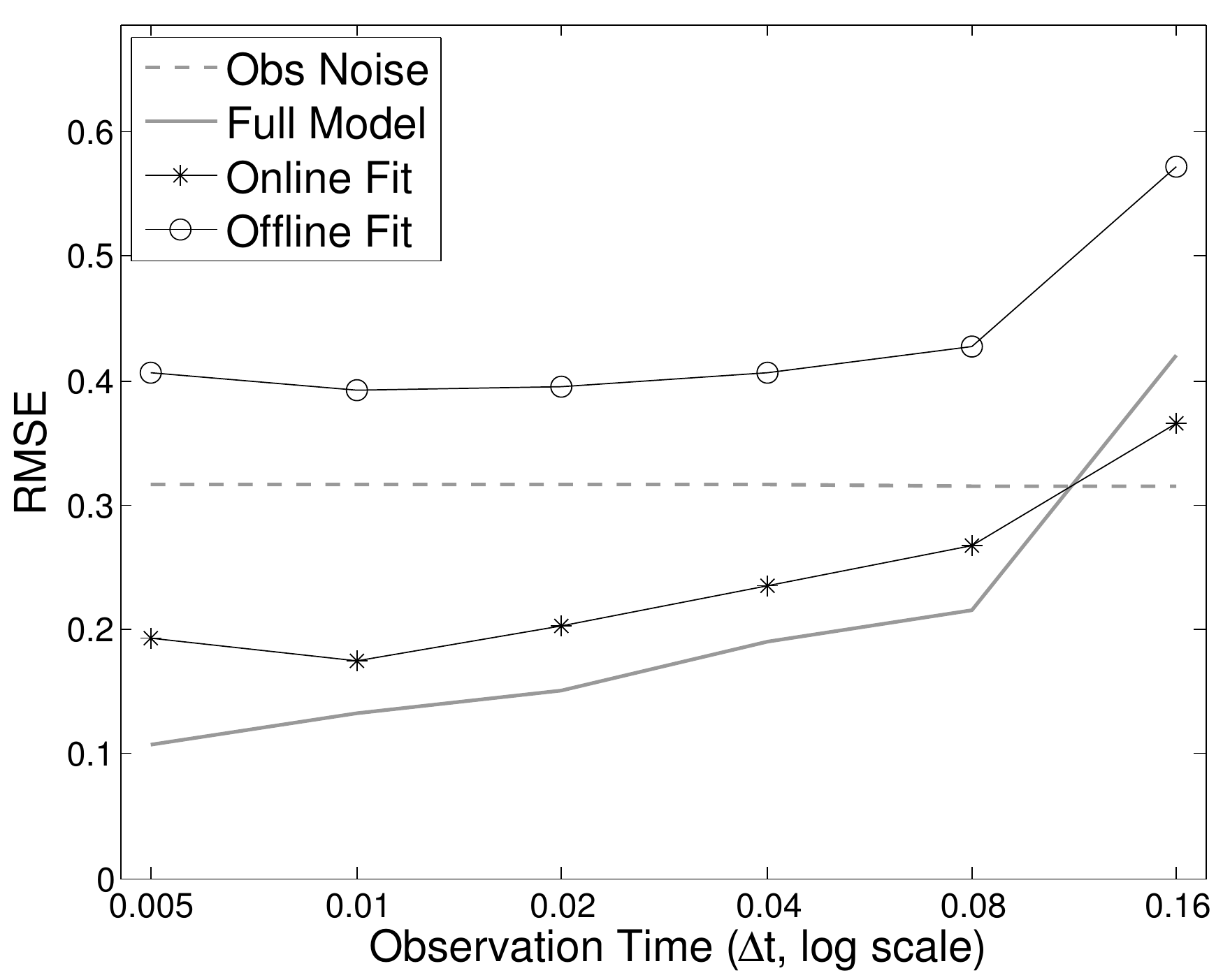}
\includegraphics[width=0.32\textwidth]{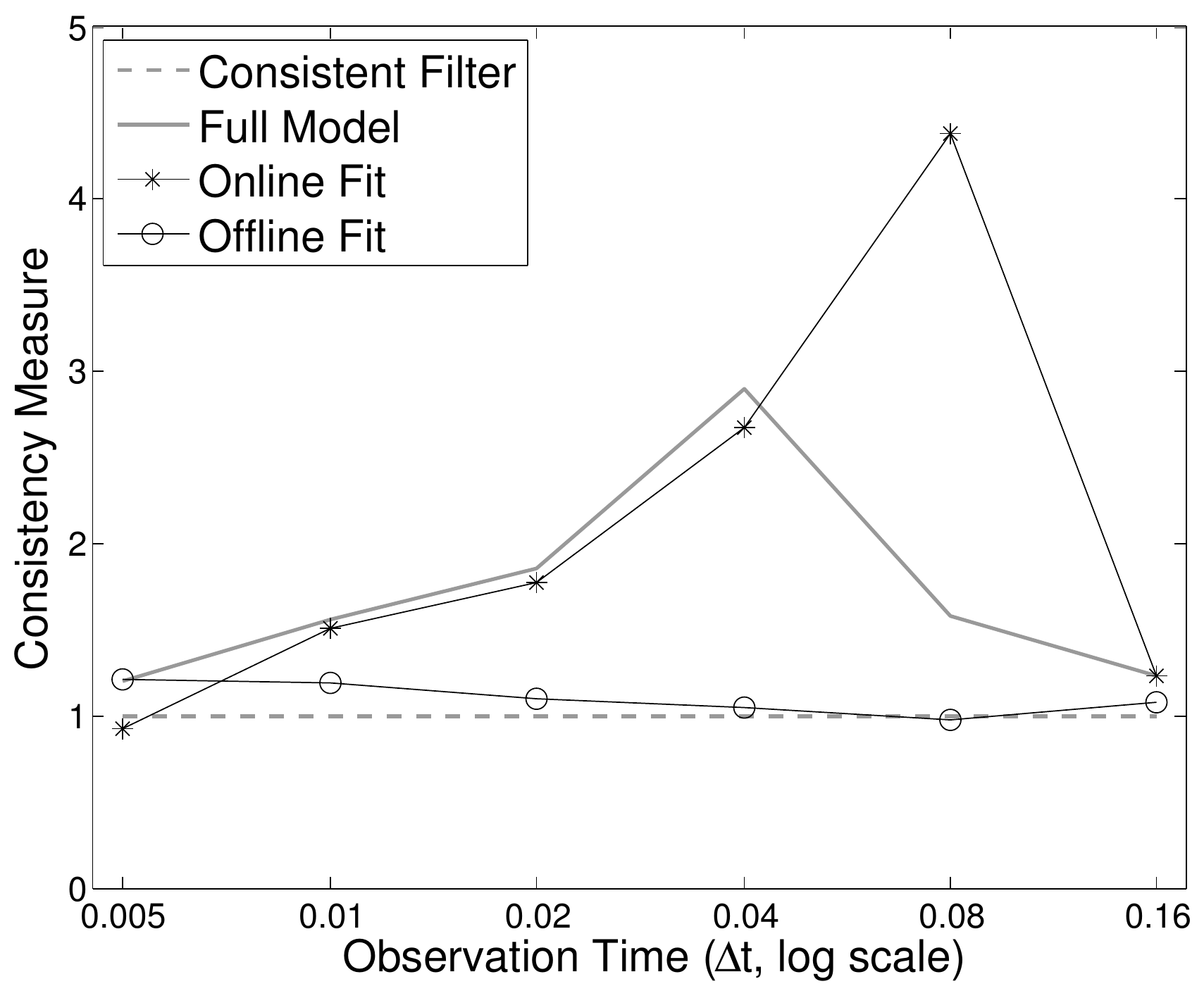}
\includegraphics[width=0.32\textwidth]{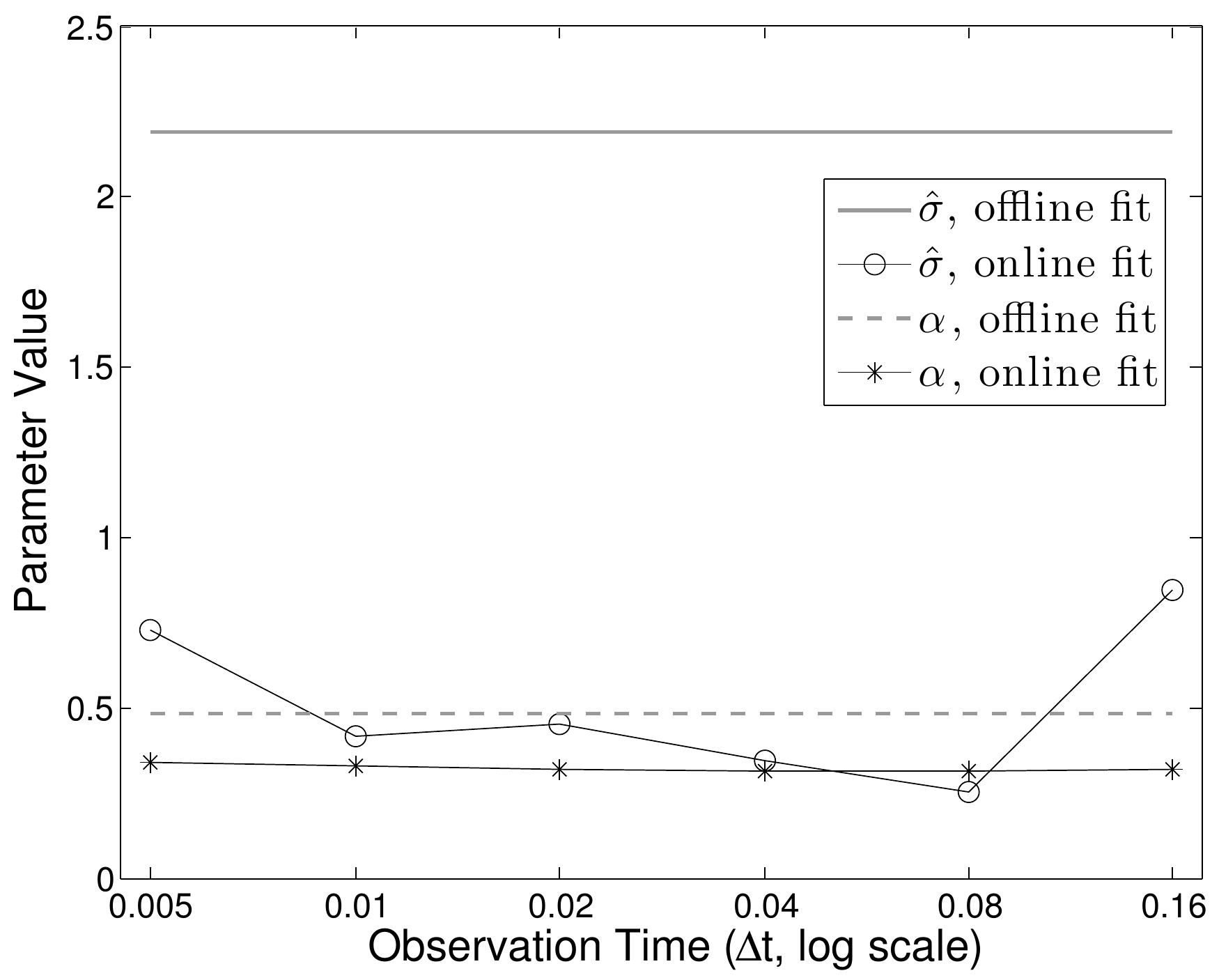}
\caption{\label{2layerL96comp} Filter performance measured in terms of root mean squared error (RMSE, left) and consistency (middle) for sparse observation of the slow variables.  Results are averaged over $7,000$ assimilation cycles.  All filters use the ensemble transform Kalman filter with dynamics integrated with the Runge-Kutta (RK4) method.  The \emph{Full Model} filter uses \eqref{lor96}, the same model used to generate the data.  The \emph{RSFAD} curves use \eqref{lor96reduced} with parameters $\alpha$ and $\sigma_{ij}$ estimated as in Appendix E with the cyclic parameterization of $\sigma_{ij}$.  The \emph{Cubic+AR(1)} model using \eqref{ArnoldReduced} with the parameters of \cite{amp:13} is not shown since the filtered diverged.  The \emph{Online Fit} curves use \eqref{ArnoldReduced} with $b_1=\alpha$ and $\hat \sigma = \frac{1}{8}\sum_{i=1}^8 \sigma_{ii}$ taken from the estimates produced from the RSFAD and the remaining parameters are $b_0=b_2=b_3=\phi=0$.  In the figure on the far right we compare the $\alpha$ and $\hat \sigma$ parameters from the online and offline estimation techniques.  The \emph{Offline Fit} curves use parameters $b_1 = 0.481$ and $\hat \sigma = 2.19$ estimated using the technique of \cite{amp:13}.}
\end{figure}

\begin{figure}
\centering
\includegraphics[width=.45\textwidth]{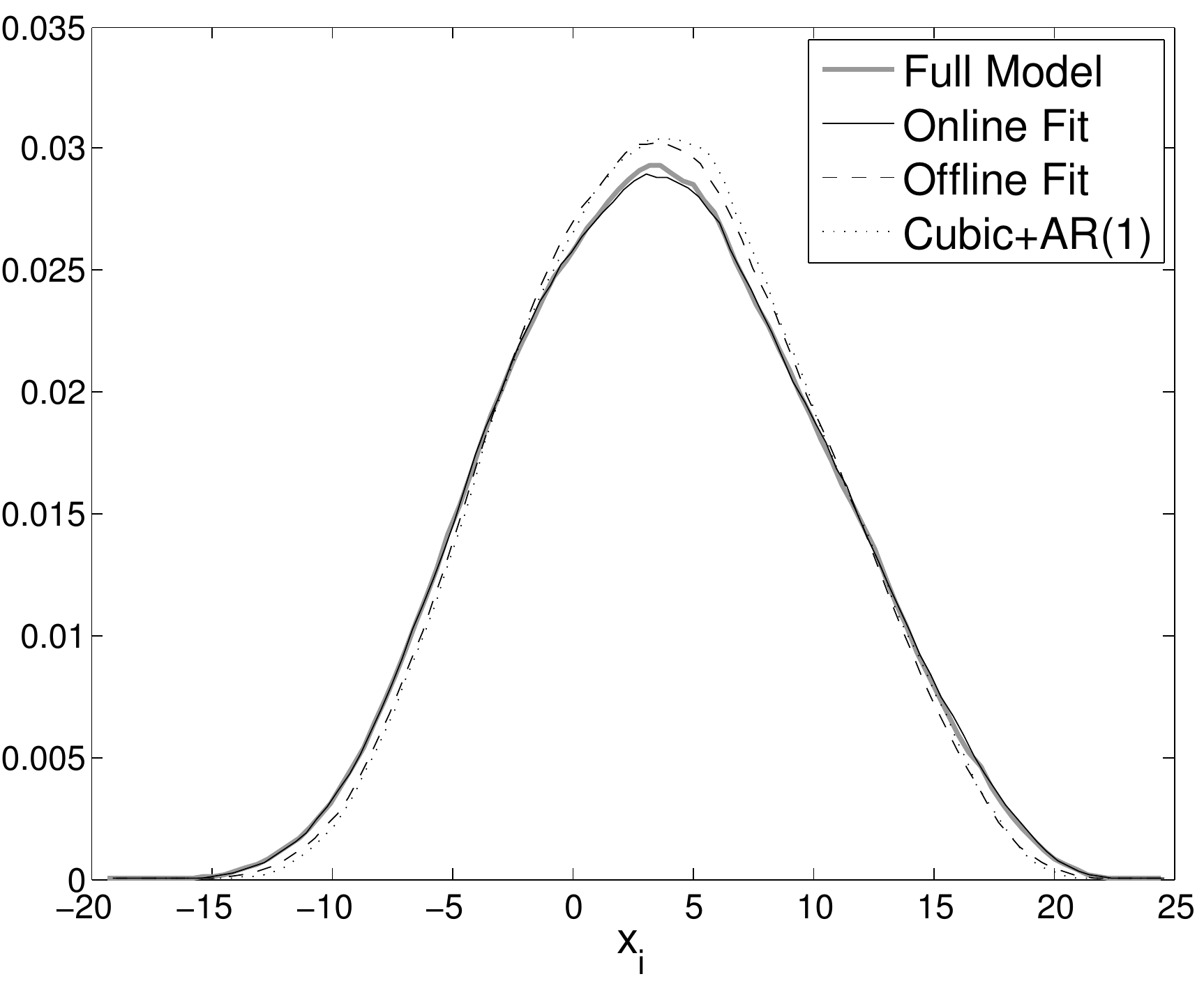}
\includegraphics[width=0.435\textwidth]{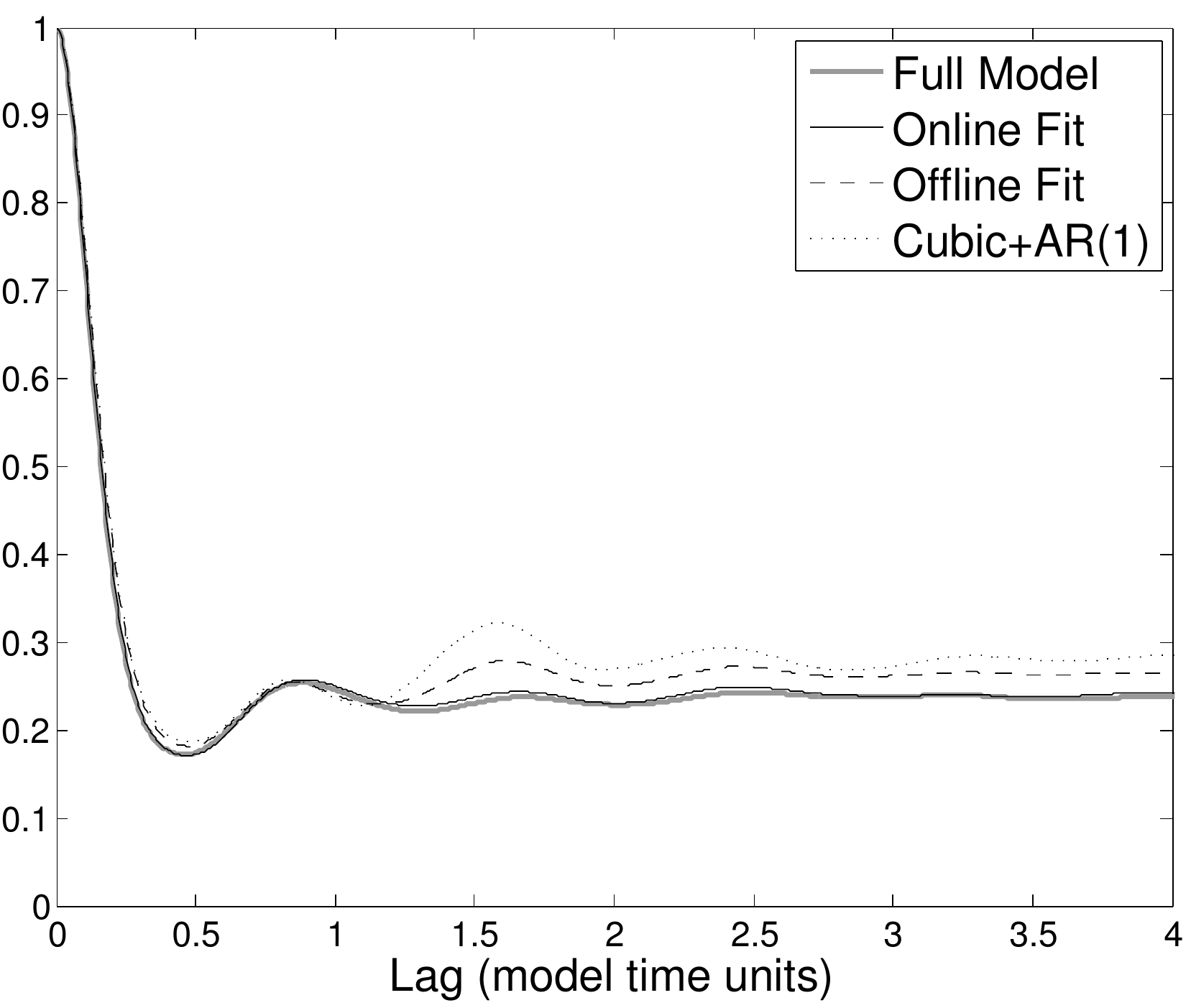}
\caption{\label{pdfcorr} Climatological forecast performance is compared in terms of the invariant measure of the slow variables shown as a probability density function (left) and the autocorrelation as a function of lag steps of length $0.005$ (right). Each curve is computed from a long free run with $1.6\times 10^7$ data points.  The \emph{Full Model} filter uses \eqref{lor96}.  The \emph{Cubic+AR(1)} curves use \eqref{ArnoldReduced} with the parameters of \cite{amp:13}.  The \emph{Online Fit} curves use \eqref{ArnoldReduced} with $b_1=\alpha$ and $\hat \sigma = \frac{1}{8}\sum_{i=1}^8 \sigma_{ii}$ taken from the estimates produced from the RSFAD and the remaining parameters are $b_0=b_2=b_3=\phi=0$.  The \emph{Offline Fit} uses the procedure for offline parameter estimation from \cite{amp:13} to find $b_1$ and $\hat \sigma$ and sets all the other parameters to zero.}
\end{figure}

In Figure \ref{2layerL96comp} we compare the performance on the filtering problem and we see that the \emph{Offline Fit} parameters give worse performance than the observation in terms of RMSE.  On the other hand, the \emph{Online Fit} parameters produce filtered solutions with a RMSE which is relatively close to that of the full model. 
Notice that the consistency of the \emph{Offline Fit} is very good, which agrees with the results in \cite{amp:13} which compares ensemble spread to ensemble error in the prior model.  However, as shown in Appendix D, a good consistency result is meaningless when the mean estimate is not accurate; so while the actual error and filter error estimate agree, they are both very large.  In contrast, the \emph{Online Fit} is underestimating the covariance slightly (compare the scale of the y-axis to that in Figure \ref{2layerL96}) but the RMSE and consistency are close to those of the \emph{Full Model}.  Moreover, in order to make a fair comparison, the \emph{Online Fit} only uses the diagonal part of the covariance matrix estimated by the RSFAD and the additional covariance of the off-diagonal terms is probably needed to produce a more consistent filter.  In Figure \ref{pdfcorr} we compare the equilibrium marginal density and correlation function of the \emph{Online Fit} and \emph{Offline Fit} to those of the slow variables of the full model.  In this regime the \emph{Online Fit} produces very good agreement with both the equilibrium density and the correlation function over a very long time (note that $4$ model time units corresponds to $800$ integration steps for the reduced model).  In contrast, the \emph{Offline Fit} and even the full \emph{Cubic+AR(1)} models showed some deviations, notably underestimating the variance and overestimating the lag correlations at the later times.

Since the \emph{Online Fit} gives good filter performance and also closely matches the equilibrium statistics of the full model, we conclude that the ansatz \eqref{lor96reduced} is sufficient for this regime of the two-layer Lorenz-96.  We should emphasize that there is no inherent problem with offline parameter estimation.  The problem with the linear regression based estimation scheme of \cite{amp:13} is that the deterministic parameter, $\alpha=b_1$, and diffusion amplitude, $\hat{\sigma}$, in the stochastic parameterization model are estimated separately. So, when a parameter in \eqref{lor96reduced} is independently perturbed, the nonlinear feedback of this perturbation is not appropriately accounted in the filtered estimates.  A successful offline parameter estimation scheme would need to simultaneously account for all of these nonlinear feedback relationships, rather than with two separate least squares estimates.


\section{Summary and discussion}\label{discussion}

In this paper, we studied two simple examples to understand how model error from unresolved scales affects the state estimation and uncertainty quantification of multiscale dynamical systems, given noisy observations of all the resolved scale components alone. From the mathematical analysis of these simple test problems, we learned that for a continuous time linear model with Gaussian noise, there exists a unique choice of parameters in a linear reduced model for the slow variables which gives optimal filtering when only the slow variables are observed.  Moreover, these parameters simultaneously gives the best equilibrium statistical estimates, and as a consequence they can be estimated offline from equilibrium statistics of the true signal.  In particular this shows that in the linear setting the Mean Stochastic Model (MSM) introduced in \cite{mgy:10,mh:12} is the optimal reduced model.  

By examining the continuous-time nonlinear SPEKF problem, we showed that the linear theory extends to this non-Gaussian, nonlinear configuration as long as we know the optimal stochastic parameterization ansatz and there is no error in the observation model. We confirmed this finding by noticing that the stochastic terms in the Kushner equations do not produce additional constraints to determine the reduced model parameters as long as the observation model has no additional error. This implies that one would get the same parameters by matching the first two-moments of the corresponding Fokker-Planck equations (ignoring the stochastic terms in the Kushner equations). Although we only show the Gaussian closure approximate filter in this paper, we found no additional constraints when the Gaussian closure approximation is ignored (the coefficients of the higher moments satisfy the same constraints). Numerically, we show that the additional correction terms that we found in our formal asymptotic expansion produces accurate filtering as well as accurate long term covariance prediction.  Moreover, we reinforce this result numerically on a complex nonlinear system in Section \ref{L96} by numerical estimation of a reduced stochastic model for the two-layer Lorenz-96 model.  Once again, given the ``right" stochastic parameterization (chosen based on our analysis and the earlier work \cite{mtv:01,mfc:09}) we find a single set of parameters which simultaneously produces reasonably accurate filter estimates and equilibrium statistical estimates.  

When the stochastic parameterization ansatz is insufficient, parameters chosen for good filtering may give poor equilibrium statistics and vice versa.  This is shown analytically and numerically in Section \ref{nonlinear theory} for the SPEKF filtering problem when the stochastic parameterization does not include a multiplicative stochastic forcing. This is also reinforced numerically in Section \ref{L96}(\ref{l96params}) for the two-layer Lorenz-96 model, where we show that neither linear damping (RDFD) nor additive stochastic forcing (RSFA) alone are sufficient to give accurate filtering across multiple time-scale separations, and the ``right" parameterization requires both of these terms (RSFAD).  Moreover, in Section \ref{L96}(\ref{l96compare}) we show that the parameters estimated by RSFAD match the equilibrium statistics of the full model and give good filter performance even for sparse observations at long observation times with small time-scale separation.

Finally, even when the correct stochastic ansatz is known, it is imperative to estimate the parameters simultaneously and to account for the nonlinear feedback of the stochastic parameters in the parameter estimation technique.  In particular, in Section \ref{L96}(\ref{l96compare}) we compare an offline parameter estimation technique proposed in \cite{wilks:05} and used by \cite{amp:13}, to an online parameter estimation technique introduced in \cite{bs:13}.  We find that the online parameters give good filter performance and match the equilibrium statistics whereas the offline parameters yield filter estimates which are worse than the observations. In our numerical results, the online stochastic parameter estimation scheme produces extremely accurate filter and equilibrium statistical estimates even when the data assimilation method only takes noisy observations of only half of the slow variables, while the offline technique uses a much longer data set of noiseless observations of all of the slow variables. 

The weakness of the offline technique that we tested \cite{wilks:05,amp:13} is that the deterministic and stochastic parameters are estimated separately based on a linear regression fitting on a training data set. Such a scheme does not account for the feedback of these parameters on the reduced model, which is particularly important when the underlying dynamics are nonlinear.  We emphasize that offline estimation methods are not inherently worse than online estimation, however a successful estimation technique must estimate all parameters simultaneously. As pointed out in \cite{bm:14}, the design of an adequate offline estimation method for accurate filtering with model error may involve solving a constrained optimization problem with the constraints given by the three information measures to account for the statistics of the filter error, including the mean biases, and the correlations between the truth and the filter estimates. It would be interesting to see whether such method produces parameters that give accurate equilibrium statistical estimates, assuming that the optimal stochastic parameterization is known. On the other hand, there are, for example, other offline methods based on information theoretic criteria which were shown to improve long term uncertainty quantification \cite{bm:12,mb:12, mg:10,mg:11a, mg:11b}; it would also be interesting to check whether these methods can also give accurate filtered solutions when the optimal stochastic ansatz is given. In the linear and Gaussian setting, the Mean Stochastic Model in \cite{mgy:10,mh:12} is an example of an offline estimation method that produces optimal filtering as well as equilibrium statistical prediction. In this special setting, the offline technique is sufficient because of two special circumstances, first, the parameters can be completely characterized by two equilibrium statistics and second, the underlying signal is stationary. In a nonlinear setting, the online technique gives a natural way for the parameters to evolve via small perturbations while the feedback from these perturbations on the reduced model are compared to the observations, adaptively. Moreover, it does not require a training data set and can work directly with the noisy observations as they become available. 

The online parameter estimation methods also have some weaknesses, they sometimes are limited by issues of observability of parameters (see Chapter 13 of \cite{mh:12}) and they also required careful design to avoid expensive computational costs \cite{dcdg:85,bs:13,hmm:14}. Extending these techniques to more general stochastic parameterizations and high-dimensional problems is therefore an important task for future research. To make stochastic parameterization more useful in real applications, many open problems remain to be solved, including how to determine the ``right" stochastic parameterization if the one we proposed is inadequate, and how to simultaneously estimate the multiplicative noise coefficients in addition to the linear damping and additive noise considered in Section \ref{L96}. Another important issue is to understand whether the linear theory holds when the observation model also depends on the fast variables. We plan to address these questions in our future research.

\comments{
In this paper, we studied two simple examples to understand how model error from unresolved scales affects the state estimation and uncertainty quantification of multiscale dynamical systems. From the mathematical analysis of these simple test problems, we learned that: 
\begin{itemize}
\item Model error creates inconsistency in the filtered statistical estimates, that is, the filtered covariance estimates are different than the actual error covariance of the filtered mean estimates.
\item For linear and Gaussian filtering problems, there exists a unique reduced filter model involving only the slow variables that will produce optimal mean and covariance estimates. The reduced model includes correction terms in the form of a linear damping and an additive stochastic forcing. We found this model by imposing this consistency condition. 
\item For general nonlinear filtering problems, the true filter posterior solutions are characterized by a conditional distribution that solves a stochastically forced PDE (Kushner equation). Therefore, it is impractical to find the unique reduced model since it requires imposing consistency on all higher-order moments. To gain insight on nonlinear filtering, we compared the posterior mean and covariance estimates obtained by imposing a Gaussian closure on the posterior distribution on our simple test problem. This approximation turns out to be the continuous-time analog of the SPEKF \cite{ghm:10a,ghm:10b}. In this simple setup, we found a reduced filter model, with mean and covariance estimates which are close to the solutions with the perfect model. The reduced model includes an additional correction term in the form of multiplicative noise stochastic forcing (in the Stratonovich sense) in addition to a linear damping and an additive noise stochastic forcing. 
\end{itemize}

Based on these theoretical results, we propose: 
\begin{itemize}
\item A useful metric to quantify consistency of filtered solutions when the true posterior solutions are not accessible in real applications. The consistency measure is designed to be the analog of RMSE for measuring the performance of covariance estimates for complex simulations in the presence of model error.  Consistency, together with the mean square error (MSE), quantify the performance of the filtered estimates of the first two moments. 
\item A simple stochastic parameterization that includes a linear damping term, as well as additive and multiplicative stochastic forcings, to account for unresolved scales in filtering general multiscale problems. Unlike the classical numerical methods discussed in the Introduction, this stochastic parameterization simultaneously accounts for both the mean model error and the model error covariance statistics.
\end{itemize}

While this stochastic parameterization will not be the optimal choice for a general complex model, it is theoretically justified on a linear and a simple nonlinear example, and produces encouraging statistical estimates on the slow variables of the two-layer Lorenz-96 model after truncating 72 fast variables of a total of 81 variables. Furthermore, it improves the consistency of our estimates. All of these results were obtained by ignoring the multiplicative noise term , $\beta = 0$ in the stochastic parameterization \eqref{genapprox}. 

To make this stochastic parameterization becomes useful in real application, many open problems remain to be solved. Naturally, one would need to find an alternative stochastic parameterization if the one proposed is inadequate. For linear problems, the optimal parameter estimation method is the Mean Stochastic Model (MSM) introduced in \cite{mgy:10}. In general high-dimensional, nonlinear problems, how can we efficiently estimate the stochastic parameters.
We will address these questions in our future research.
}


\section*{Acknowledgments}
The research of J.H. is partially supported by the Office of Naval Research Grants N00014-11-1-0310, N00014-13-1-0797, MURI N00014-12-1-0912 and the National Science Foundation DMS-1317919. T. B. is supported by the ONR MURI grant N00014-12-1-0912.

\appendix{Expansion of the true filter covariance}\label{appcovarianceExpansion}
 
We consider the two-dimensional linear and Gaussian filtering problem (written in matrix form),
\begin{align}
\label{appfullmodelmatrix} 
d{\vec x} &= A_{\epsilon}\vec x \,dt+ \sqrt{Q_{\epsilon}}dW,\\
dz &= H\vec x dt + \sqrt{R}dV,\nonumber 
\end{align}
where $\vec x = (x,y)^\top$, $H=[1,0]$, $W = (W_x, W_y)^\top$ and $V$ are standard i.i.d. Wiener processes,
\begin{align} 
A _{\epsilon} = \left( \begin{array}{cc} a_{11} & a_{12} \\ a_{21}/\epsilon & a_{22}/\epsilon \end{array}\right), \hspace{15pt}\textup{ }\hspace{15pt} Q_{\epsilon} = \left(\begin{array}{cc} \sigma_x^2 & 0\\ 0 &\sigma_y^2/\epsilon \end{array}\right). \nonumber
\end{align}
Throughout this analysis, we assume that $A_1$ has negative eigenvalues, $a_{22}<0$, and $\tilde{a}\equiv a_{11}-a_{12}a_{22}^{-1}a_{21}<0$. The optimal filter posterior statistics, $\hat{\vec x} = \mathbb{E}\big[\vec x \big]$ and $\hat S = \mathbb{E}\big[(\vec x-\hat{\vec x})(\vec x-\hat{\vec x})^\top \big]$, are given by the Kalman-Bucy equations \cite{kalman:61}.  In this appendix we find the steady state covariance $\hat s_{11} = \mathbb{E}[(x-\hat x)^2]$ for the slow variable $x$ and we expand the solution in terms of $\epsilon$.  We show that up to order-$\epsilon^2$, the covariance $\hat s_{11}$ solves a one-dimensional Riccati equation which will motivate the optimal reduced model in \ref{appoptimalreduced}.
 
The Kalman-Bucy solution implies that $\hat S$ has a steady state solution given by the algebraic Riccati equation,
\begin{align}\label{appriccatiappendix}
0 = A_{\epsilon}\hat S + \hat SA_{\epsilon}^\top - \hat SH^\top R^{-1}H\hat S +Q_{\epsilon}.
\end{align}
 For the case of the two variable system, by setting $\hat S = \left(\begin{array}{cc} \hat s_{11} & \hat s_{12} \\ \hat s_{12} & \hat s_{22} \end{array}\right)$ the steady state Riccati equation yields the following three equations,
\begin{align}
0 &= \sigma_x^2 - \hat s_{11}^2/R + 2a_{11}\hat s_{11} + 2a_{12}\hat s_{12}, \nonumber \\
0 &= a_{11}\hat s_{12} + a_{12}\hat s_{22} - \hat s_{11}\hat s_{12}/R + \hat s_{11}a_{21}/\epsilon + \hat s_{12}a_{22}/\epsilon , \nonumber \\
0 & = \sigma_y^2/\epsilon - \hat s_{12}^2/R + 2\hat s_{12}a_{21}/\epsilon + 2\hat s_{22}a_{22}/\epsilon . \nonumber
\end{align}
Solving the third equation for $\hat s_{22}$ and plugging the result into the second equation yields,
\begin{align}
0 = \left(\frac{\epsilon a_{12}}{2Ra_{22}} \right) \hat s_{12}^2 + \left(a_{11} - \frac{a_{12}a_{21}}{a_{22}} - \frac{\hat s_{11}}{R} + \frac{a_{22}}{\epsilon}\right) \hat s_{12} + \left( \frac{\hat s_{11}a_{21}}{\epsilon} - \frac{a_{12}\sigma_y^2}{2a_{22}} \right). \nonumber
\end{align}
Multiplying this expression by $\epsilon$, for $a_{22}\neq 0$ we obtain,
\begin{align}\label{apps12eq}
\hat s_{12} &= \frac{ -\hat s_{11}a_{21} + \epsilon \left(\frac{a_{12}\sigma_y^2}{2a_{22}} \right) }{ a_{22} + \epsilon ( a_{11} -a_{12}a_{21}/a_{22} - \hat s_{11}/R )} + \mathcal{O}(\epsilon^2), \nonumber \\
&= -\hat s_{11}\frac{a_{21}}{a_{22}} + \epsilon \frac{ a_{12}\sigma_y^2/2 + \hat s_{11}a_{21}( a_{11} -a_{12}a_{21}/a_{22} - \hat s_{11}/R )}{a_{22}^2} + \mathcal{O}(\epsilon^2),
\end{align}
where we have used $\frac{a+b\epsilon}{c+d\epsilon} = \frac{a}{c} + \frac{bc-ad}{c^2} \epsilon + \mathcal{O}(\epsilon^2)$.  Plugging this solution for $\hat s_{12}$ into the first Riccati equation above gives the following equation for $\hat s_{11}$,
\begin{align}
0 &= -\left( \frac{1}{R} + \epsilon \frac{2a_{12}a_{21}}{a_{22}^2 R} \right)\hat s_{11}^2 
+ 2 \left(a_{11}  - \frac{a_{12}a_{21}}{a_{22}}  + \epsilon\left( \frac{a_{21}a_{12}a_{11}}{a_{22}^2} - \frac{a_{21}^2a_{12}^2}{a_{22}^3}\right)  \right) \hat s_{11} 
+ \left(  \sigma_x^2 + \epsilon \frac{a_{12}^2\sigma_y^2}{a_{22}^2}  \right) + \mathcal{O}(\epsilon^2), \nonumber \\
&=-\left(\frac{1+2\epsilon\hat a}{R}\right)\hat s_{11}^2 
+ 2\tilde a \left(1 + \epsilon\hat a \right) \hat s_{11} 
+ \left(  \sigma_x^2 + \epsilon \sigma_y^2 \frac{a_{12}^2}{a_{22}^2}  \right) + \mathcal{O}(\epsilon^2), \nonumber
\end{align}
where $\tilde a = a_{11} - \frac{a_{12}a_{21}}{a_{22}}$ and $\hat a = \frac{a_{12}a_{21}}{a_{22}^2}$.  Dividing both sides by $(1+2\epsilon \hat a)$ and expanding in $\epsilon$ we have,
\begin{align}\label{appfullmodelcov}
0 &=-\frac{\hat s_{11}^2}{R} + 2\tilde a (1-\epsilon\hat a)  \hat s_{11} + \left( \sigma_x^2(1-2\epsilon\hat a)  + \epsilon \sigma_y^2 \frac{a_{12}^2}{a_{22}^2}  \right) + \mathcal{O}(\epsilon^2).
\end{align}
Equation \eqref{appfullmodelcov} yields the variance $\hat s_{11}$ of the state estimate for the observed variable $x$ based on the optimal filter using the full model.  Note that by truncating terms that are order $\epsilon^2$, equation \eqref{appfullmodelcov} for $\hat s_{11}$ has the form of a Riccati equation for a one dimensional dynamical system.  In particular, consider the linear one-dimensional filtering problem,
\begin{align}
dX &=  aX\,dt+\sigma_X \,dW_X, \nonumber\\
dz &= X\,dt + \sqrt{R}\, dV.\label{appreducedmodel}
\end{align}
The steady state covariance $\tilde s$ from the Kalman-Bucy solution to \eqref{appreducedmodel} solves the Riccati equation, 
\begin{align}\label{appriccati1d} -\frac{\tilde s^2}{R} + 2a\tilde s + \sigma_X^2 = 0. \end{align}
Our goal is to find $\{a, \sigma_X\}$ such that the solution of \eqref{appreducedmodel} agrees with the solution for $x$ of \eqref{appfullmodelmatrix}.  In order to make $\tilde s$ agree with $\hat s_{11}$, we establish the following tradeoff between $a$ and $\sigma_X^2$ in the limit as $\epsilon \to 0$.
\begin{customthm}{2.1}\label{appthm1}
Let $\hat s_{11}$ be the first component of the steady state solution to \eqref{appriccatiappendix} and let $\tilde s$ solve \eqref{appriccati1d}.  Then $\lim_{\epsilon\to 0} \frac{\tilde s-\hat s_{11}}{\epsilon} = 0$ if and only if 
\[ \sigma_X^2 = -2(a-\tilde a(1-\epsilon\hat a))\hat s_{11} + \sigma_x^2(1-2\epsilon\hat a) + \epsilon\sigma_y^2\frac{a_{12}^2}{a_{22}^2}  + \mathcal{O}(\epsilon^2). \]
\end{customthm}

\begin{proof}
Subtracting \eqref{appriccati1d} from \eqref{appfullmodelcov}, we obtain
\begin{align}\label{appp1eq1} -\frac{\hat s_{11}^2 - \tilde s^2}{r} + 2\tilde a(1-\epsilon\hat a)\hat s_{11} - 2a\tilde s +  \left(  \sigma_x^2(1-2\epsilon\hat a) + \epsilon \sigma_y^2 \frac{a_{12}^2}{a_{22}^2}  \right) - \sigma_X^2 = \mathcal{O}(\epsilon^2). \end{align}
First, assuming the $\sigma_X^2$ has the form given in the statement, \eqref{appp1eq1} reduces to 
\[  \mathcal{O}(\epsilon^2) = -\frac{\hat s_{11}^2 - \tilde s^2}{R} + 2a(\hat s_{11} - \tilde s) = (\hat s_{11}-\tilde s)\left(- \frac{\hat s_{11}+\tilde s}{R} + a \right),  \]
which shows that $\hat s_{11}-\tilde s = \mathcal{O}(\epsilon^2)$ so $\lim_{\epsilon\to 0} \frac{\hat s_{11}-\tilde s}{\epsilon} = 0$.  Conversely, if we assume that $\lim_{\epsilon\to 0} \frac{\hat s_{11}-\tilde s}{\epsilon} = 0$ then we can rewrite \eqref{appp1eq1} as
\begin{align} 0 &= -(\hat s_{11}-\tilde s)\frac{\hat s_{11} + \tilde s}{R} + 2\tilde a(1-\epsilon\hat a)\hat s_{11} -2a \hat s_{11} +  \left(  \sigma_x^2(1-2\epsilon\hat a) + \epsilon \sigma_y^2 \frac{a_{12}^2}{a_{22}^2}  \right) - \sigma_X^2 + \mathcal{O}(\epsilon^2), \nonumber\\
&= (2\tilde a(1-\epsilon\hat a) - 2a) \hat s_{11} +  \left(  \sigma_x^2(1-2\epsilon\hat a) + \epsilon \sigma_y^2 \frac{a_{12}^2}{a_{22}^2}  \right)  - \sigma_X^2 + \mathcal{O}(\epsilon^2), 
\end{align}
and solving for $\sigma_X^2$ gives the desired identity.
\end{proof}


\appendix{Existence and uniqueness of the optimal reduced model}\label{appoptimalreduced}

In this appendix we consider using the one-dimensional filtering scheme \eqref{appreducedmodel} to filter noisy observations of $x$ that solves the two-dimensional model in \eqref{appfullmodelmatrix}.  We will show that there exists a unique choice of parameters $\{a,\sigma_X\}$ which gives the optimal filtered estimate of the slow variable $x$ from \eqref{appfullmodelmatrix} in the sense that both the mean and covariance estimates match the true filtered solutions and the equilibrium statistics also match those of the full prior model.  This optimal choice is determined by requiring the parameters to lie on the manifold defined by Theorem \ref{appthm1}, and additionally, requiring that the equilibrium covariance implied by the reduced filter model to match the equilibrium covariance of the full model. These two constraints will determine the parameters $\{a,\sigma_X\}$ up to order-$\epsilon^2$.  We also show that as long as the parameters $\{a,\sigma_X\}$ agree with the optimal parameters up to order-$\epsilon$ the resulting filter is consistent consistent in the sense that the \emph{actual error covariance} of the filtered mean estimate must equal to the \emph{error covariance estimate} produced by the filtering scheme.  Moreover, the reduced filter mean estimate will agree with mean estimate for the slow variable from the full filter path-wise up to order-$\epsilon$.

The main goal of this appendix will be to prove the following theorem.
\begin{customthm}{2.2}\label{appthm2} 
There exists a unique choice of parameters given by $a = \tilde a(1-\epsilon \hat a)$ and $\sigma_X^2$ according to Theorem \ref{appthm1}, such that the steady state reduced filter \eqref{appreducedmodel} is both consistent and optimal up to order-$\epsilon^2$.  This means that $\tilde s$, the steady state covariance estimate of the reduced filter, is consistent with the steady state actual error covariance $E_{11} = \lim_{t\to\infty}\mathbb{E}[(x(t)-\tilde x(t))^2]$ so that $\tilde s = E_{11} + \mathcal{O}(\epsilon^2)$, and also $\tilde s$ agrees with the steady state covariance $\hat s_{11}$ from the optimal filter $\tilde s = \hat s_{11} + \mathcal{O}(\epsilon^2)$.
The unique optimal parameters can also be determined by requiring the covariance of the reduced model to match that of the slow variable from the full model up to order-$\epsilon^2$.
\end{customthm}
\begin{proof}
We introduce the following three-dimensional SDE which governs the joint evolution of the full state $\vec x = (x,y)$ and the reduced filter estimate of the state $\tilde x$.  Note that the reduced filter with parameters $\{a,\sigma_X\}$ is given by 
\begin{align} \label{appreducedfilterAnsatz}
d\tilde{x} = a \tilde{x} \,dt + K(dz - \tilde{x}\,dt) = (a-K)\tilde x dt + Kx dt + K\sqrt{R}dV,
\end{align}
where $K = \tilde s/R$ and $\tilde s$ is determined by $\{a,\sigma_X\}$ since $\tilde s$ solves \eqref{appriccati1d} the time-dependent Riccati equation,  
\begin{align}\frac{d\tilde s}{dt} = -\frac{\tilde s^2}{R} + 2a\tilde s + \sigma_X^2. \nonumber\end{align}
Our goal is to determine the parameters $\{a,\sigma_X\}$ which yield a optimal reduced filter.  To find this we combine the full model from \eqref{appfullmodelmatrix} with the reduced filter equation in \eqref{appreducedfilterAnsatz} which yields,
\begin{align}\label{apppriorAndFilter}
\left(\begin{array}{c} dx \\ dy \\ d\tilde x \end{array}\right) =  \left(\begin{array}{ccc} a_{11} & a_{12} & 0 \\ a_{21}/\epsilon & a_{22}/\epsilon & 0 \\ K & 0 & a-K \end{array}\right) \left(\begin{array}{c} x\\ y \\ \tilde x \end{array}\right) + \left(\begin{array}{ccc} \sigma_x & 0 & 0 \\ 0 & \sigma_y/\sqrt{\epsilon} & 0 \\  0 & 0 & K\sqrt{R} \end{array}\right) \left(\begin{array}{c} dW_x \\ dW_y \\ dV \end{array}\right).
\end{align}
Writing the Lyapunov equation for the covariance matrix $E = \mathbb{E}[(x(t)\, y(t) \, \tilde x(t))^\top (x(t)\, y(t)\, \tilde x(t))]$ we find the $e_{11} = c_{11}$, $e_{12}=c_{12}$, and $e_{22} = c_{22}$ as in \eqref{appceq}.  Since $E$ is symmetric there are three remaining variables $e_{13}, e_{23}$ and $e_{33}$ which satisfy the remaining three equations from the Lyapuonov equation which are,
\begin{align}\label{applyapFilter}
0 &= a_{11}e_{13} + a_{12}e_{23} + e_{13}(a-\tilde{s}/R) + e_{11}\tilde{s}/R  \nonumber \\
0 &= a_{21}e_{13}/\epsilon + a_{22}e_{23}/\epsilon + e_{23}(a-\tilde{s}/R) + e_{12} \tilde{s}/R  \nonumber \\
0 &= \tilde{s}^2/R + 2e_{33}(a-\tilde{s}/R) + e_{13} 2\tilde{s}/R,
\end{align}
where we have substituted $K=\tilde s/R$.  Solving the second equation of \eqref{applyapFilter} for $e_{23}$ we find, 
\[ e_{23} = -\frac{a_{21}}{a_{22}}e_{13} + \epsilon \frac{a_{21}}{a_{22}^2} ( e_{11}\tilde s/R + (a-\tilde s/R)e_{13}) + \mathcal{O}(\epsilon^2), \]
and substituting this expression for $e_{23}$ into the first equation of \eqref{applyapFilter} we have,
\[ 0 = \tilde a e_{13} + (a-\tilde s/R)(1+\epsilon\hat a) e_{13} + \tilde s/R(1+\epsilon\hat a) e_{11} + \mathcal{O}(\epsilon^2) \]
and solving for $e_{13}$ we find that,
\begin{align}\label{appe13soln} e_{13} = \frac{-\tilde s e_{11}(1+\epsilon\hat a)/R}{\tilde a + (a-\tilde s/R)(1+\epsilon \hat a)} + \mathcal{O}(\epsilon^2) =  \frac{-\tilde s e_{11}}{\tilde a(1-\epsilon \hat a)R + a R - \tilde s}+ \mathcal{O}(\epsilon^2) 
\end{align}
At this point we must choose one more constraint to obtain a unique choice of parameters $\{a,\sigma_X\}$ up to order-$\epsilon^2$. Unfortunately, the consistency condition $E_{11} = \tilde s$, is too weak; it only specifies the choice of parameters up to order-$\epsilon$. From the general linear theory of Hilbert spaces, the optimal filter mean estimate in the sense of least squares is given by the orthogonal projection onto the subspace spanned by its innovations (see Theorem 6.1.2 and the discussion in Section 6.2 in \cite{oksendal:03}). This condition implies that the actual error, $e=x-\tilde{x}$ is orthogonal to the estimate $\tilde{x}$ under the joint probability distribution for $(W_X,V)$, that is, $\mathbb{E}(e\,\tilde{x})=0$.  Thus we introduce the optimality condition $\mathbb{E}[(x-\tilde x)\tilde x] = 0$, this is the optimality condition in the sense of Hilbert space projections.  We first find the manifold of parameters determined by the optimality condition, note that,
\begin{align} \label{appE11soln} \mathbb{E}[(x-\tilde x)\tilde x] &= e_{13} - e_{33} = \frac{\tilde s^2}{2(aR-\tilde s)} + e_{13}\left(1+\frac{\tilde s}{aR-\tilde s}\right) + \mathcal{O}(\epsilon^2) \nonumber \\
&= \frac{\tilde s^2}{2(aR-\tilde s)} + e_{13} \frac{aR}{aR-\tilde s}  + \mathcal{O}(\epsilon^2) \nonumber \\
&=  \frac{\tilde s^2}{2(aR-\tilde s)} - \frac{\tilde s e_{11} a}{(\tilde a(1-\epsilon\hat a) + a -\tilde s/R)(aR-\tilde s)}  + \mathcal{O}(\epsilon^2) .
\end{align}
Applying the optimality condition $\mathbb{E}[(x-\tilde x)\tilde x] = 0$ we find that,
\begin{align} 0 &= \tilde s^2(\tilde a(1-\epsilon\hat a) + a -\tilde s/R) - 2\tilde s e_{11} a  + \mathcal{O}(\epsilon^2)  \nonumber \\
&= \tilde s\left( \tilde a(1-\epsilon\hat a)\tilde s + a\tilde s -\tilde s^2/R + \frac{a}{\tilde a(1-\epsilon\hat a)} \sigma_s^2  \right)  + \mathcal{O}(\epsilon^2)  \nonumber \\
&=  \tilde s\left( (\tilde a(1-\epsilon\hat a) - a)\tilde s -\sigma_X^2 + \frac{a}{\tilde a(1-\epsilon\hat a)}\sigma_s^2 \right)  + \mathcal{O}(\epsilon^2), 
\end{align}
where $e_{11} = c_{11}$ from \eqref{appc11soln} below.  Thus the optimality condition is defined by the manifold of parameters,
\begin{align} \label{appoptimalityManifold} \sigma_X^2 = (\tilde a(1-\epsilon\hat a) - a)\tilde s + \frac{a}{\tilde a(1-\epsilon\hat a)}\sigma_s^2  + \mathcal{O}(\epsilon^2) 
\end{align}
and combining this with the manifold of Theorem \ref{appthm1} we have
\begin{align} 0 &= 2(a-\tilde a(1-\epsilon\hat a))\tilde s - \sigma_s^2 + (\tilde a(1-\epsilon\hat a) - a)\tilde s + \frac{a}{\tilde a(1-\epsilon\hat a)}\sigma_s^2  + \mathcal{O}(\epsilon^2)  \nonumber \\
&= (a-\tilde a(1-\epsilon\hat a))\tilde s + \sigma_s^2\frac{a-\tilde a(1-\epsilon\hat a)}{\tilde a(1-\epsilon\hat a)}  + \mathcal{O}(\epsilon^2)  \nonumber \\
&= (a-\tilde a(1-\epsilon\hat a))\left(\frac{\tilde a(1-\epsilon\hat a)\tilde s + \sigma_s^2}{\tilde a(1-\epsilon\hat a)}\right)  + \mathcal{O}(\epsilon^2)  \nonumber \\
&= (a-\tilde a(1-\epsilon\hat a))\left(\frac{\tilde a(1-\epsilon\hat a)\tilde s + \sigma_X^2 + 2a\tilde s - 2\tilde a(1-\epsilon\hat a)\tilde s}{\tilde a(1-\epsilon\hat a)}\right)  + \mathcal{O}(\epsilon^2)  \nonumber \\
&= (a-\tilde a(1-\epsilon\hat a))\left(\frac{\tilde s^2/R - \tilde a(1-\epsilon\hat a)\tilde s}{\tilde a(1-\epsilon\hat a)} \right)  + \mathcal{O}(\epsilon^2) 
\end{align}
Since $\tilde s,R > 0$ and $\tilde a<0$, the optimality condition, $\mathbb{E}[(x-\tilde x)\tilde x] = 0$, is satisfied if and only if $a = \tilde a(1-\epsilon\hat a) + \mathcal{O}(\epsilon^2)$ when $\sigma_X$ lies on the manifold of Theorem \ref{appthm1}.

We will now show that the optimal parameters yield a consistent filter in the sense that $\mathbb{E}[(x-\tilde x)^2] = \tilde s + \mathcal{O}(\epsilon^2)$, meaning that the actual error covariance equals the filter covariance estimate. The actual error covariance can be written as, $\mathbb{E}[(x-\tilde x)^2] = e_{11}+e_{33}-2e_{13}$.  Solving the third equation of \eqref{applyapFilter} for $e_{33}$ and substituting the solution into the actual error covariance we find that,
\begin{align} \mathbb{E}[(x-\tilde x)^2] &= e_{11}+e_{33}-2e_{13} = e_{11} - \frac{\tilde s^2}{2aR - 2\tilde s} - \frac{\tilde s e_{13}}{aR-\tilde s} - 2e_{13} \nonumber \\
&= e_{11} - \frac{\tilde s^2}{2aR - 2\tilde s} - e_{13} \frac{2aR-\tilde s}{aR-\tilde s} \nonumber \\
&= e_{11} - \frac{\tilde s^2}{2aR - 2\tilde s} + \frac{\tilde s e_{11}}{\tilde a(1-\epsilon\hat a)R+aR-\tilde s}\left( \frac{2aR-\tilde s}{aR-\tilde s}\right) + \mathcal{O}(\epsilon^2) \nonumber \\
&= - \frac{\tilde s^2}{2aR - 2\tilde s} + \frac{e_{11}}{aR-\tilde s}\left(aR -\tilde s+ \frac{2aR\tilde s - \tilde s^2}{R(a+\tilde a(1-\epsilon\hat a)) - \tilde s}\right) + \mathcal{O}(\epsilon^2).
\end{align} 
Since the Lyapunov equation is describing the steady state covariance, we may assume the $\tilde s$ has also reached steady state and thus solves the Riccati equation $0 = -\tilde s^2 + 2aR\tilde s + \sigma_X^2 R$.  Moreover, we can substitute the steady state covariance $e_{11} = c_{11}$ from \eqref{appc11soln}, abbreviating $\sigma_s^2 = \sigma_x^2(1-2\epsilon\hat{a})+\epsilon\sigma_y^2\frac{a_{12}^2}{a_{22}^2}$, to find that
\begin{align}\label{appactual error}
 \mathbb{E}[(x-\tilde x)^2] &= \frac{1}{2aR - 2\tilde s}\left(-\tilde s^2 - \frac{\sigma_s^2}{\tilde{a}(1-\epsilon\hat{a})}\left(aR -\tilde s+ \frac{-\sigma_X^2 R}{R(a+\tilde a(1-\epsilon\hat a)) - \tilde s}\right)\right) + \mathcal{O}(\epsilon^2) \nonumber \\
 &= \tilde s + \frac{1}{2aR - 2\tilde s}\left(\tilde s^2-2aR\tilde s - \frac{\sigma_s^2}{\tilde{a}(1-\epsilon\hat{a})}\left(\frac{R^2 a(a+\tilde a(1-\epsilon\hat a)) -\tilde a(1-\epsilon\hat a)\tilde sR}{R(a+\tilde a(1-\epsilon\hat a)) - \tilde s}\right)\right) + \mathcal{O}(\epsilon^2) \nonumber \\
 &= \tilde s + \frac{1}{2a - 2\tilde s/R}\left(\sigma_X^2- \sigma_s^2\left(\frac{R\frac{a^2}{\tilde{a}(1-\epsilon\hat{a})}+ Ra -\tilde s}{R\tilde a(1-\epsilon\hat a) + Ra - \tilde s}\right)\right) + \mathcal{O}(\epsilon^2).
 \end{align}
 We find the consistency condition, $\mathbb{E}[(x-\tilde x)^2] = \tilde s$ to be
  \[ \sigma_X^2= \sigma_s^2\left(\frac{\frac{a}{\tilde{a}(1-\epsilon\hat{a})}+ 1 -\tilde s/(aR)}{\frac{\tilde a(1-\epsilon\hat a)}{a} + 1 - \tilde s/(aR)}\right)+ \mathcal{O}(\epsilon^2). \]
 Clearly the optimal parameters $\{a,\sigma_X\} = \{\tilde a(1-\epsilon\hat a),\sigma_s\}$ satisfy this condition and therefore they yield a consistent filter.  
 

From the above proof, we have shown that the unique parameters can be determined by the manifold of Theorem \ref{appthm1} along with the optimality condition $\mathbb{E}[(x-\tilde x)\tilde x] = 0$.  These unique parameters can also be determined by matching the equilibrium covariance of the reduced model to that of the slow variable from the full model in \eqref{appfullmodelmatrix}.  From Lemma \ref{applem1} below, we have the covariance of the slow variable, $c_{11}$.  The reduced filter model in \eqref{appreducedmodel} has an asymptotic variance of $-\sigma_X^2/(2a)$. By requiring the two equilibrium variances to be equal, $c_{11} = -\sigma_X^2/(2a)$, we obtain a second constraint on the parameters $\{a,\sigma_X\}$ given by,
\begin{align}
\frac{\sigma_X^2}{a} =  \frac{\sigma_x^2(1-2\epsilon\hat{a})+\epsilon\sigma_y^2\frac{a_{12}^2}{a_{22}^2}}{\tilde{a}(1-\epsilon\hat{a})} + \mathcal{O}(\epsilon^2).\label{appconst2}
\end{align}
Substituting the relation \eqref{appconst2} into the manifold in Theorem 2.1, we obtain:
\[ \left(\sigma_x^2(1-2\epsilon\hat{a})+\epsilon\sigma_y^2\frac{a_{12}^2}{a_{22}^2}\right)\frac{a}{\tilde a(1-\epsilon\hat a)}= -2\hat s_{11}(a-\tilde a(1-\epsilon \hat a)) + \sigma_x^2(1-2\epsilon\hat{a})+\epsilon\sigma_y^2\frac{a_{12}^2}{a_{22}^2} + \mathcal{O}(\epsilon^2) \]
which we simplify to obtain,
\begin{align}
\left(\frac{a}{\tilde a(1-\epsilon\hat a)}-1 \right) \left(2\tilde a(1-\epsilon\hat a)\hat s_{11} + \sigma_x^2(1-2\epsilon\hat a) + \epsilon\sigma_y^2\frac{a_{12}^2}{a_{22}^2} \right) =\left(\frac{a}{\tilde a(1-\epsilon\hat a)}-1\right) \frac{\hat s_{11}^2}{R} = \mathcal{O}(\epsilon^2),
\end{align}
where the second equality is from the covariance expansion in \eqref{appfullmodelcov}. Since $\hat s_{11}$ and $R$ are both $\mathcal{O}(1)$, we have $a=\tilde{a}(1-\epsilon\hat{a})+\mathcal{O}(\epsilon^2)$ and from the equilibrium variance constraint in \eqref{appconst2}, we find $\sigma_X^2= \sigma_x^2(1-2\epsilon\hat{a})+\epsilon\sigma_y^2\frac{a_{12}^2}{a_{22}^2}+\mathcal{O}(\epsilon^2)$.  This shows that there are unique parameters $\{a,\sigma_X\}$ which match the covariance of the prior model and posterior estimate simultaneously up to order-$\epsilon^2$.
 \end{proof}
 
\noindent {\bf Remark}. A result of \cite{fz:11} shows that for $a=\tilde a + \mathcal{O}(\epsilon)$ and $\sigma_X^2 = \sigma_x^2 +\mathcal{O}(\epsilon)$, the reduced filter mean and covariance estimates are uniformly optimal for all time in the following sense. Given identical initial statistics, $\hat{x}(0)=\tilde{x}(0), \hat{s}_{11}(0)=\tilde{s}(0)>0$, there are time-independent constants $C$, such that, $\mathbb{E}\Big(|\hat{x}(t)-\tilde{x}(t)|^2\Big) \leq C\epsilon^2$. We conjecture that the pathwise convergence should be, \[\mathbb{E}\Big(|\hat{x}(t)-\tilde{x}(t)|^2\Big)\leq \mathcal{O}(\epsilon^4)\] for the unique parameters from Theorem \ref{appthm2}. This conjecture is based on numerical result shown in Figure \ref{appepsilon4} below.  However, the proof of this would require solving the Lyapunov equation of a five-dimensional joint evolution of the full model, full filter, and reduced filter, repeating the similar steps as in \eqref{appreducedfilterAnsatz}-\eqref{appe13soln}, for $\mathbb{E}(\hat x\tilde x), \mathbb{E}(\hat x^2),$ and $\mathbb{E}(\tilde x^2)$ up to $\mathcal{O}(\epsilon^4)$. Since this Lyapunov equation has 15 equations of 15 variables, it is not illuminating to verify our conjecture analytically.
 
 \begin{figure}[h]
\centering
\includegraphics[width=.45\textwidth]{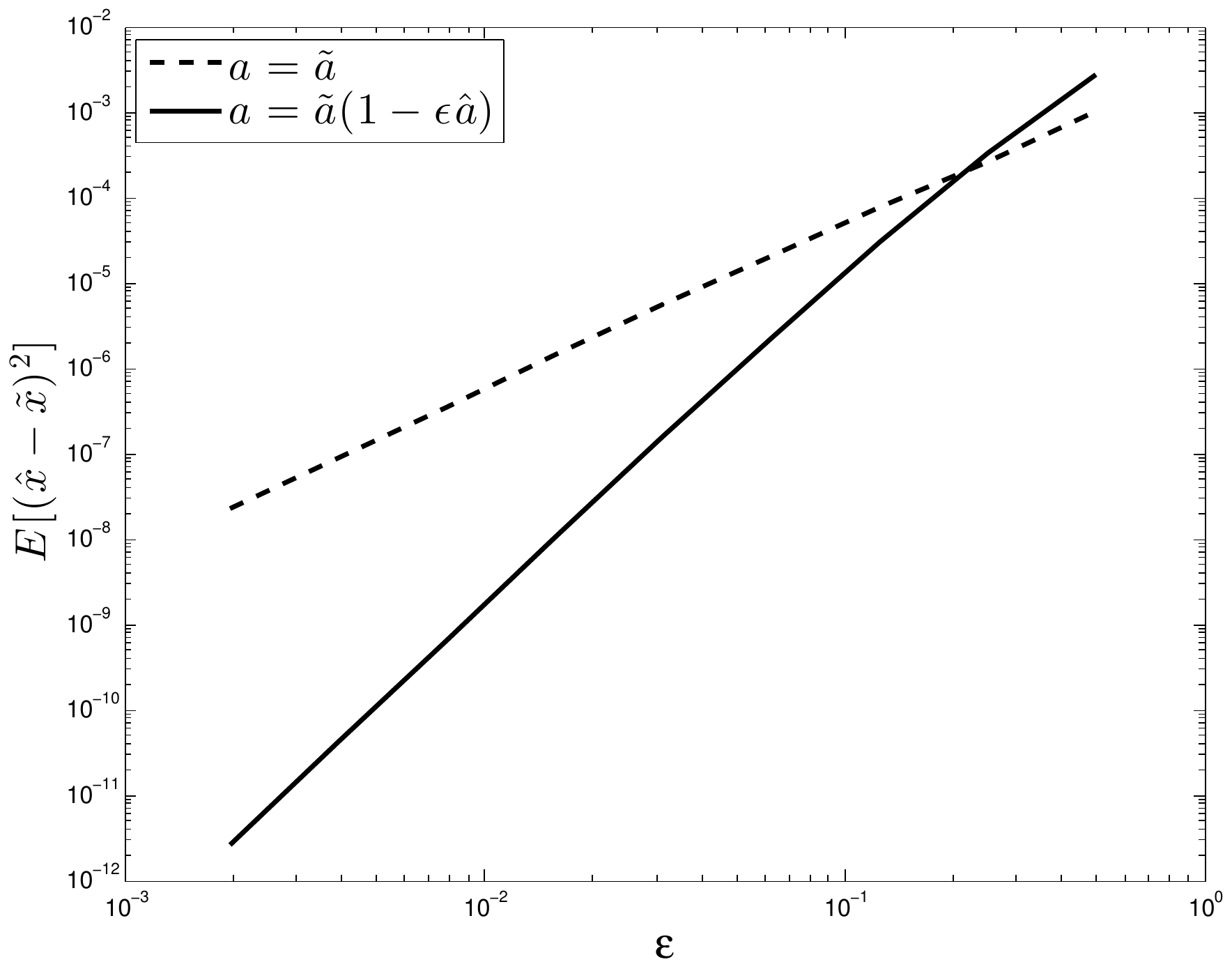}
\caption{\label{appepsilon4} We show the mean squared error between the true filter estimate $\hat x$ and the reduced filter estimate $\tilde x$ for two different parameter values $\{a,\sigma_X\}$ on the manifold of Theorem \ref{appthm1}.  Notice that when $a=\tilde a$ the convergence is order-$\epsilon^2$ whereas with the optimal parameter $a=\tilde a(1-\epsilon\hat a)$ the convergence appears to be order-$\epsilon^4$.  The parameters are $a_{11}=a_{12}=a_{22} =-1$, $a_{21}=R=\sigma_x=\sigma_y=1$}
\end{figure}

The following lemmas will establish the equilibrium covariance and correlation time of the slow variable, $x$, in \eqref{appfullmodelmatrix}.  These facts are used in Theorem \ref{appthm2} above as well as in the proof of a Corollary 2.3 to Theorem \ref{appthm2} which is in the manuscript.  
\begin{lem}\label{applem1} The equilibrium covariance of the slow variable, $x$, from the full model in \eqref{appfullmodelmatrix} is given by,
\begin{align}\label{appc11soln}
c_{11} = -\frac{\sigma_x^2(1-2\epsilon\hat{a})+\epsilon\sigma_y^2\frac{a_{12}^2}{a_{22}^2}}{2\tilde{a}(1-\epsilon\hat{a})} + \mathcal{O}(\epsilon^2).
\end{align}
\end{lem}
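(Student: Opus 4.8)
The plan is to exploit the fact that the equilibrium (prior) covariance of \eqref{appfullmodelmatrix} obeys exactly the same algebraic structure as the Riccati equation already analyzed in Appendix~A, but with the observation term switched off. First I would write the equilibrium covariance $C = \lim_{t\to\infty}\mathbb{E}[\vec x(t)\vec x(t)^\top]$ as the solution of the Lyapunov equation
\[
A_\epsilon C + C A_\epsilon^\top + Q_\epsilon = 0 .
\]
Comparing this with the algebraic Riccati equation \eqref{appriccatiappendix}, the only difference is the absence of the quadratic term $C H^\top R^{-1} H C$; equivalently, the Lyapunov equation is recovered from the Riccati equation by sending $R^{-1}\to 0$. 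This observation is the crux of the argument, because it means that the entire asymptotic reduction performed in Appendix~A applies verbatim once every $1/R$ contribution is deleted.

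Concretely, writing $C=\left(\begin{smallmatrix} c_{11} & c_{12}\\ c_{12} & c_{22}\end{smallmatrix}\right)$, the three scalar equations are obtained from those under \eqref{appriccatiappendix} by dropping each $\hat s^2/R$ and $\hat s\, \hat s/R$ term. I would then repeat the same elimination: solve the $(2,2)$ equation for $c_{22}$, substitute into the $(1,2)$ equation, multiply through by $\epsilon$, and use the standing assumption $a_{22}\neq 0$ to solve for $c_{12}$ as an asymptotic series via the same expansion $\frac{a+b\epsilon}{c+d\epsilon}=\frac{a}{c}+\frac{bc-ad}{c^2}\epsilon + \O(\epsilon^2)$ employed in \eqref{apps12eq}. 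Setting $R^{-1}=0$ in that formula gives directly $c_{12} = -c_{11}\,a_{21}/a_{22} + \epsilon\,(a_{12}\sigma_y^2/2 + c_{11}a_{21}\tilde a)/a_{22}^2 + \O(\epsilon^2)$.

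Substituting this back into the $(1,1)$ equation $2a_{11}c_{11}+2a_{12}c_{12}+\sigma_x^2=0$ collapses, exactly as in the passage leading to \eqref{appfullmodelcov}, to the scalar relation
\[
2\tilde a(1-\epsilon\hat a)\,c_{11} + \sigma_x^2(1-2\epsilon\hat a) + \epsilon\sigma_y^2\frac{a_{12}^2}{a_{22}^2} = \O(\epsilon^2),
\]
which is precisely \eqref{appfullmodelcov} with the quadratic term removed and $\hat s_{11}$ replaced by $c_{11}$. Solving for $c_{11}$, and using $\tilde a<0$ to guarantee the denominator stays bounded away from zero, yields the claimed formula \eqref{appc11soln}. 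I do not expect any genuine obstacle here, since the computation mirrors Appendix~A step for step; the only point requiring care is the bookkeeping of orders in $\epsilon$ so that the $\O(\epsilon^2)$ truncation remains consistent through the elimination of $c_{12}$ and $c_{22}$, together with checking that the standing assumptions $a_{22}<0$ and $\tilde a<0$ keep every denominator nonzero throughout the expansion.
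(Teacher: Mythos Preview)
Your proposal is correct and follows essentially the same elimination route as the paper: write the Lyapunov equation, eliminate $c_{22}$ then $c_{12}$, and read off $c_{11}$ to order $\epsilon^2$. Your added observation---that the Lyapunov system is literally the Riccati system \eqref{appriccatiappendix} with $R^{-1}\to 0$, so one may simply delete the $1/R$ terms from \eqref{apps12eq} and \eqref{appfullmodelcov} rather than redo the algebra---is a clean shortcut the paper does not take; the paper instead repeats the elimination from scratch and arrives first at $c_{11}=\frac{-\sigma_x^2-\epsilon\sigma_y^2 a_{12}^2/a_{22}^2}{2\tilde a(1+\epsilon\hat a)}+\mathcal{O}(\epsilon^2)$ before multiplying top and bottom by $1-2\epsilon\hat a$ to reach \eqref{appc11soln}.
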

\begin{proof}
In order to find the optimal parameters for the reduced model in \eqref{appreducedmodel}, we will expand the Lyapunov equation which defines the covariance of the full model in \eqref{appfullmodelmatrix}.  Let $C=\lim_{t\to\infty}\mathbb{E}\left[\vec x(t) \vec x(t)^\top \right]$ be the equilibrium covariance matrix of the prior model in \eqref{appfullmodelmatrix}, so that $C$ is determined by the Lyapunov equation,
\begin{align}\label{appfullLyapunov}
A_\epsilon C + CA_\epsilon + Q_\epsilon = 0.
\end{align}
The Lyapunov equation \eqref{appfullLyapunov} yields the following three equations for the entries of $C$,
\begin{align}\label{appceq}
0 &= \sigma_x^2 + 2a_{11} c_{11} + 2a_{12} c_{12}, \nonumber \\
0 &= a_{11}c_{12} + a_{12}c_{22}  + c_{11}a_{21}/\epsilon + c_{12}a_{22}/\epsilon , \nonumber \\
0 & = \sigma_y^2/\epsilon + 2c_{12}a_{21}/\epsilon + 2c_{22}a_{22}/\epsilon.
\end{align}
Solving the third equation for $c_{22}$, and substituting to the second equation we can solve for $c_{12}$ as
\begin{align}\label{appc12} c_{12} = \frac{\sigma_y^2 \frac{a_{12}}{2a_{22}} - c_{11} a_{21}/\epsilon}{\tilde{a} + a_{22}/\epsilon}. \end{align}
and plugging this equation for $c_{12}$ into the first equation in \eqref{appceq} we can solve for $c_{11}$ as
\begin{align} c_{11} = \frac{-\sigma_x^2 - \sigma_y^2 \frac{a_{12}^2}{a_{22}\tilde a + a_{22}^2/\epsilon}}{2\left(a_{11} - \frac{a_{12}a_{21}}{a_{22} + \epsilon \tilde{a}}\right)} = \frac{-\sigma_x^2 - \sigma_y^2 \epsilon \frac{a_{12}^2}{a_{22}^2}}{2\tilde a(1+\epsilon \hat a)} + \mathcal{O}(\epsilon^2). \end{align}
Finally, by multiplying the numerator and denominator by $1-2\epsilon\hat a$ we obtain the desired result in \eqref{appc11soln}.
\end{proof}
In the next lemma we compute the correlation time of the full model in \eqref{appfullmodelmatrix} which is defined as, 
\[ T = \lim_{t\to\infty} \int_0^{\infty} \mathbb{E}\left[\vec x(t) \vec x(t+\tau)^\top\right] d\tau. \]
In particular we are interested in the correlation time of the slow variable which is given by, 
\[ T_c = \lim_{t\to\infty} \mathbb{E}[x(t)x(t+\tau)]/\mathbb{E}[x(t)x(t)], \]
however it is necessary to simplify the expression of $T$ as a matrix first, and we find $T_c$ in the following lemma.
\begin{lem}  For the prior model from \eqref{appfullmodelmatrix} the correlation time of the slow variable is,
\[ T_c = \frac{-1}{\tilde a(1-\epsilon\hat a)} + \mathcal{O}(\epsilon^2)\]
\end{lem}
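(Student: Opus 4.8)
The plan is to reduce the correlation time to a purely algebraic expression in the equilibrium covariance matrix $C$ from Lemma~\ref{applem1} and the inverse of the drift $A_\epsilon$, and then expand that expression in $\epsilon$. The key structural fact is that for the stationary linear diffusion in \eqref{appfullmodelmatrix} the lagged covariance factorizes. Writing the mild solution $\vec x(t+\tau) = e^{A_\epsilon\tau}\vec x(t) + \int_t^{t+\tau} e^{A_\epsilon(t+\tau-s)}\sqrt{Q_\epsilon}\,dW(s)$ and observing that the stochastic integral over $[t,t+\tau]$ is mean zero and independent of $\vec x(t)$, I obtain $\lim_{t\to\infty}\mathbb{E}[\vec x(t)\vec x(t+\tau)^\top] = C\,e^{A_\epsilon^\top\tau}$ for $\tau\geq 0$. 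This converts the definition of $T$ into a single matrix integral.

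Next I would carry out the $\tau$-integration. Since $A_\epsilon$ has negative eigenvalues by assumption, $e^{A_\epsilon^\top\tau}\to 0$ as $\tau\to\infty$ and $\int_0^\infty e^{A_\epsilon^\top\tau}\,d\tau = -(A_\epsilon^\top)^{-1}$, so the correlation-time matrix is $T = -C(A_\epsilon^{-1})^\top$. Extracting the slow component and normalizing by the equilibrium variance $c_{11}$ gives $T_c = [T]_{11}/c_{11} = B_{11} + (c_{12}/c_{11})\,B_{12}$, where $B := -A_\epsilon^{-1}$.

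The remaining step is the explicit inversion and expansion. Using $\det A_\epsilon = (a_{11}a_{22}-a_{12}a_{21})/\epsilon$ together with the factorization $a_{11}a_{22}-a_{12}a_{21}=a_{22}\tilde a$, the two relevant entries are $B_{11} = -a_{22}/(a_{22}\tilde a) = -1/\tilde a$, which is exact and $\epsilon$-independent, and $B_{12} = \epsilon a_{12}/(a_{22}\tilde a)$, which is exactly $\mathcal{O}(\epsilon)$. Because $B_{12}$ already carries one power of $\epsilon$, I only need the leading order of the ratio $c_{12}/c_{11}$, which from \eqref{appc12} is $-a_{21}/a_{22}+\mathcal{O}(\epsilon)$. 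Substituting and recalling $\hat a = a_{12}a_{21}/a_{22}^2$ yields $T_c = -1/\tilde a - \epsilon\hat a/\tilde a + \mathcal{O}(\epsilon^2) = -1/(\tilde a(1-\epsilon\hat a)) + \mathcal{O}(\epsilon^2)$, which is the claim.

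The main obstacle is really the only delicate point: justifying the factorization $\mathbb{E}[\vec x(t)\vec x(t+\tau)^\top] = C\,e^{A_\epsilon^\top\tau}$ in the stationary limit and the convergence of the $\tau$-integral, both of which rest on the stability assumption on $A_\epsilon$. Once that matrix identity is established the remainder is bookkeeping, and the apparent stiffness in $\epsilon$ (the $1/\epsilon$ entries of $A_\epsilon$) is harmless, since the only $\epsilon$-sensitive pieces—namely $B_{12}$ and the subleading correction to $c_{12}/c_{11}$—enter multiplied by an extra power of $\epsilon$, so the stated order-$\epsilon^2$ accuracy follows without any higher-order expansion of $C$.
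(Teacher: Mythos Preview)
Your proposal is correct and follows essentially the same route as the paper: factorize the lagged covariance via the mild solution, integrate $e^{A_\epsilon^\top\tau}$ against the stability of $A_\epsilon$ to obtain $T=-C(A_\epsilon^{-1})^\top$, read off the $(1,1)$ entry, and expand using $c_{12}/c_{11}=-a_{21}/a_{22}+\mathcal{O}(\epsilon)$. The only cosmetic difference is that the paper invokes the eigendecomposition of $A_\epsilon$ to evaluate $\int_0^\infty e^{A_\epsilon\tau}\,d\tau=-A_\epsilon^{-1}$, whereas you go straight to the explicit $2\times2$ inverse; both yield the same entries $B_{11}=-1/\tilde a$ and $B_{12}=\epsilon a_{12}/(a_{22}\tilde a)$ and hence the same expansion.
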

\begin{proof}
Since $\vec x(t)$ is a stable two-dimensional Ohrnstein-Uhlenbeck process, we can write the solution $\vec x(t+\tau)$ as,
\[ \vec x(t+\tau) = e^{A_{\epsilon}\tau}\vec x(t) + \int_t^{t+\tau} e^{A_{\epsilon}(t+\tau-s)}\sqrt{Q_{\epsilon}} d\vec W_s. \]
Note that the stochastic integral is not correlated to $\vec x(t)$ so the expectation of this term will be zero.  Thus we only need to expand,
\[ T = \lim_{t\to\infty}\int_0^{\infty} \mathbb{E}\left[ \vec x(t) (e^{A_{\epsilon}\tau}\vec x(t))^\top \right] d\tau =  \lim_{t\to\infty}\mathbb{E}\left[\vec x(t)\vec x(t)^\top \left(\int_0^{\infty} e^{A_{\epsilon}\tau} d\tau \right)^\top \right] \]
We now write $A_{\epsilon}$ in its eigenvalue decomposition, $A_{\epsilon} = U_{\epsilon}\Lambda_{\epsilon} U_{\epsilon}^{-1}$ and note that 
\[ \int_0^{\infty} e^{A_{\epsilon}\tau}d\tau = U_{\epsilon}\int_0^{\infty} e^{\Lambda_{\epsilon}\tau} d\tau U_{\epsilon}^{-1} = -U_{\epsilon}\Lambda_{\epsilon}^{-1} U_{\epsilon}^{-1} = -A_{\epsilon}^{-1}. \]
This fact allows us to simplify,
\[ T = \lim_{t\to\infty} - E\left[\vec x(t)\vec x(t)^{\top}(A_{\epsilon}^{-1})^\top \right] = -CA_{\epsilon}^{-\top} = -\left(\begin{array}{cc} c_{11} & c_{12} \\ c_{12} & c_{22} \end{array}\right) \left(\begin{array}{cc} 1/\tilde a & -a_{21}/(a_{22}\tilde a)   \\  -\epsilon a_{12}/(a_{22}\tilde a) & a_{11}/(a_{22}\tilde a) \end{array}\right), \]
so we find that $T_c = T_{1,1}/c_{11}$ is given by,
\[ T_c = \frac{1}{c_{11}} \left(-c_{11}/\tilde a + \epsilon c_{12} a_{12}/(a_{22}\tilde a)\right). \]
Finally, substituting the relationship $c_{12} = -a_{21}c_{11}/a_{22} + \mathcal{O}(\epsilon)$ from \eqref{appc12} we have,
\begin{align}\label{appTc} T_c = -\frac{1+\epsilon\hat a}{\tilde a} + \mathcal{O}(\epsilon^2) = \frac{-1}{\tilde a(1-\epsilon\hat a)} + \mathcal{O}(\epsilon^2). \end{align}
as desired.
\end{proof}

\appendix{Asymptotic expansion of the nonlinear SPEKF}\label{appnonlineartheory}

Consider the following continuous-time nonlinear problem,
\begin{align}\label{appSPEKF}
d u &= [-(\gamma + \lambda_u)u + b] \, dt+ \sigma_u dW_u, \nonumber \\
d b &= -\frac{\lambda_b}{\epsilon} b \,dt + \frac{\sigma_b}{\sqrt{\epsilon}}dW_b, \\
d\gamma &= -\frac{\lambda_{\gamma}}{\epsilon} \gamma \,dt + \frac{ \sigma_{\gamma}}{\sqrt{\epsilon}} dW_{\gamma}, \nonumber\\
dz &= h(u,\beta,\gamma)\,dt +\sqrt{R}\,dV = u\,dt +\sqrt{R}\,dV,\nonumber
\end{align}
where $W_u, W_b, W_\gamma, V$ are standard i.i.d. Wiener processes. We will call this filtering problem SPEKF, which stands for ``Stochastic Parameterization Extended Kalman Filter", as introduced in \cite{ghm:10a,ghm:10b}. The posterior statistical solutions of SPEKF for discrete-time observations were obtained in \cite{ghm:10a,ghm:10b} by applying a Kalman update to the analytically solved prior mean and covariance of the stochastic model for $(u,b,\gamma)$ appearing in \eqref{appSPEKF}. To avoid confusion, we refrained from the common practice of calling the dynamical model in \eqref{appSPEKF} the SPEKF model. 

Notice that the SPEKF posterior solutions obtained in \cite{ghm:10a,ghm:10b} are not the optimal filtered solutions. The true posterior solutions for \eqref{appSPEKF}, given noisy observations, $z$, are characterized by the conditional distribution, $p(u,b,\gamma,t \,| \, z(\tau),0\leq \tau\leq t)$.  The evolution of $p$ is described by the Kushner equation \cite{kushner:64},
\[ dp = \mathcal{L}^*p\, dt + p(h-\mathbb{E}[h])^\top R^{-1}(dz-\mathbb{E}[h]dt), \]
where,
\[ \mathcal{L}^*p = -\nabla\cdot \left((-(\gamma+\lambda_u)u+b,\frac{-\lambda_b b}{\epsilon},\frac{-\lambda_{\gamma}\gamma}{\epsilon})^\top p\right) + \frac{1}{2}\left(\sigma_u^2 p_{uu} +\frac{\sigma_b^2 p_{bb}}{\epsilon} +  \frac{\sigma_{\gamma}^2p_{\gamma\gamma}}{\epsilon}\right), \]
is the Fokker-Planck operator.  For convenience we will write the innovation process as $dw_{\hat{u}} = dz - \mathbb{E}[h]dt$ which allows us to write the Kushner equation as,
\begin{align} dp = \mathcal{L}^*p\, dt + p(h-\mathbb{E}[h])^\top R^{-1}dw_{\hat{u}}. \label{appkushner}\end{align}
In order to make a formal asymptotic expansion in terms of the time scale separation $\epsilon$, we write the posterior as $p = p_0 + \epsilon p_1$.  Notice that the Fokker-Planck operator can be written as $\mathcal{L}^* = \frac{1}{\epsilon}\mathcal{L}_0^* + \mathcal{L}_1^*$ where
\begin{align} \mathcal{L}_0^* p &=  \frac{\partial}{\partial b}(\lambda_b b p) + \frac{\partial}{\partial\gamma}(\lambda_{\gamma}\gamma p) +\sigma_b^2 p_{bb} + \sigma_{\gamma}^2p_{\gamma\gamma}, \nonumber \\
 \mathcal{L}_1^*p &= \frac{\partial}{\partial u}\left((\gamma+\lambda_u)up+bp\right) + \sigma_u^2 p_{uu}. \nonumber
 \end{align}
With this expansion the Kushner equation becomes,
\[  dp_0 + \epsilon dp_1 = \frac{1}{\epsilon}\mathcal{L}_0^* p_0 + \mathcal{L}_0^* p_1 + \mathcal{L}_1^* p_0 + \epsilon \mathcal{L}_1^* p_1 + (p_0+ \epsilon p_1)(h-\mathbb{E}[h])^\top R^{-1}dw_{\hat{u}}.   \]
The order-$\epsilon^{-1}$ term requires that $\mathcal{L}_0^* p_0 = 0$ which says that $p_0$ is in the null space of the operator $\mathcal{L}_0^*$.  Since $b, \gamma$ in \eqref{appSPEKF} are ergodic processes, letting $p_{\infty}(b)$ and $p_{\infty}(\gamma)$ be the respective invariant measures, we can write,
\begin{align}
p_0(u,b,\gamma,t)=\tilde p(u,t)p_{\infty}(b)p_{\infty}(\gamma).\label{appp0}
\end{align}
We will use this fact repeatedly to complete the asymptotic expansion of the posterior distribution $p$. 

Note that convergence results of the marginal true filter distribution to the reduced filter characterized by $\tilde{p}$ are on the order of $\sqrt{\epsilon}$ for general nonlinear problems (see \cite{imkeller:13}). Here, we consider a higher order correction and we will show that for this specific example, we obtain convergence of order $\epsilon$ for the first two-moments. From the Kalman filtering perspective, we are only interested in capturing the first two moments of the posterior distribution $p$.  Using Ito's formula, we will compute the governing equations of the first two moments of the conditional distribution, $p$, which solves the Kushner equation in \eqref{appkushner}. Throughout this section we will assume that the posterior distribution, $p$, has fast decay at infinity allowing us to use integration by parts and neglect boundary terms.  To simplify the presentation, we define $\vec{u} = (u,b,\gamma)$, and all the integrals with respect to $d\vec u$ are three-dimensional integrals.

For the first moment we have, 
\[ d{\hat{u}} = \int u \, dp \,d\vec{u}, \] 
and substituting the Kushner equation we note that $up_{uu}$, $up_{bb}$ and $u p_{\gamma\gamma}$ integrate to zero leaving,
\begin{align} 
d{\hat{u}} &= \left(\int \big(u \frac{\partial}{\partial u}((\gamma+\lambda_u)up + bp) -  u \frac{\partial}{\partial b}(\lambda_b b p) + u\frac{\partial}{\partial\gamma}(\frac{\lambda_{\gamma}}{\epsilon}\gamma p)\big) d\vec{u}\right)dt + \int up(u-\hat{u})^\top R^{-1}dw_{\hat{u}} \, d\vec{u},\nonumber \\
\label{appmeanEq} &= \left(-\lambda_{u}\hat{u} - \int \gamma u p \, d\vec{u} + \int b p \, d\vec{u}\right)dt + \int (u^2-\hat{u} u)p R^{-1} dw_{\hat{u}} \, d\vec{u}, \nonumber \\
 &= \left(-\lambda_{u}\hat{u} - \int \gamma u p \, d\vec{u} + \overline{b} \right)dt+ \hat{S}R^{-1}dw_{\hat{u}}, \nonumber \\
&= \left(-\lambda_{u}\hat{u} - \overline{u\gamma} + \overline{b}\, \right) dt+ \hat{S}R^{-1}dw_{\hat{u}},
\end{align}
where we have used the fact that the innovation process $d w_{\hat{u}}$ is Brownian and uncorrelated with $\vec{u}$ \cite{bc:09}.  To estimate $\overline b = \int bp \, d\vec{u}$ we again apply the Kushner equation to compute,
\begin{align}\label{appbevol} d\overline b = \int b dp \,d\vec{u} &= \left(\int b \frac{\partial}{\partial b}\left(\frac{\lambda_b}{\epsilon} b p\right)\, d\vec{u} \right)dt + \int b(u-\overline u)p R^{-1} dw_{\hat{u}} \, d\vec{u}, \nonumber \\
&= \frac{\lambda_b}{\epsilon} \overline b\, dt + \int b(u-\overline u)p R^{-1} dw_{\hat{u}} \, d\vec{u}, \nonumber \\
&= \frac{\lambda_b}{\epsilon} \overline b\, dt + \mathcal{O}(\epsilon),
\end{align}
where the last equality comes from the expansion $p = p_0 + \epsilon p_1$ with $p_0$ satisfies \eqref{appp0}. Equation \eqref{appbevol} implies that, 
\[ \epsilon d \overline{b} = \lambda_b \overline b \, dt + \mathcal{O}(\epsilon^2), \]
which has solution $\overline b(t) = \overline b(0) e^{-\lambda_b t/\epsilon} + \mathcal{O}(\epsilon^2) \to \mathcal{O}(\epsilon^2)$ as $t\to\infty$.  Thus we can rewrite \eqref{appmeanEq} as
\begin{align}\label{appmeanEq2} d{\hat{u}} &= \left(-\lambda_{u}\hat{u} - \overline{u\gamma} \right) dt+ \hat{S}R^{-1}dw_{\hat{u}} + \mathcal{O}(\epsilon^2).
\end{align}
The term $\overline{u\gamma} = \int \gamma u p\,d\vec{u}$ represents an uncentered correlation between the two variables.  To find the evolution of $\overline{u\gamma}$ we again use the Kushner equation to expand
\begin{align} d \, \overline{u\gamma} &= \left(\int \big(u\gamma \frac{\partial}{\partial u}((\gamma+\lambda_u)up + bp) + u\gamma\frac{\partial}{\partial\gamma}(\frac{\lambda_{\gamma}\gamma}{\epsilon}p)\big) \, d\vec{u} \right)dt + \int u\gamma p(u-\hat{u})R^{-1}dw_{\hat{u}} \,d\vec{u}, \nonumber \\
&= \left(-(\lambda_u + \frac{\lambda_{\gamma}}{\epsilon})\overline{u\gamma} - \int \gamma^2up \, d\vec{u} + \int b\gamma p\, d\vec{u} \right)dt + \int u\gamma p(u-\hat{u})R^{-1}dw_{\hat{u}} d\vec{u}, \nonumber
\end{align}
where the second derivative terms $p_{uu}$ and $p_{\gamma\gamma}$ are both zero.  Applying the expansion $p = p_0 + \epsilon p_1$ with \eqref{appp0}, we can write the integral 
\[ \int \gamma^2u p\, d\vec{u} = \int \gamma^2 p_{\infty}(\gamma)d\gamma \int u \tilde p(u,t)du + \mathcal{O}(\epsilon) = \textup{var}_{\infty}[\gamma]\hat{u} + \mathcal{O}(\epsilon)  = \frac{\sigma_{\gamma}^2}{2\lambda_{\gamma}}\hat{u} + \mathcal{O}(\epsilon), \] 
similarly $\int b\gamma p\, d\vec{u} = \mathcal{O}(\epsilon)$ which gives us the expansion,
\begin{align} \frac{d}{dt} \overline{u\gamma} &= -(\lambda_u + \frac{\lambda_{\gamma}}{\epsilon})\overline{u\gamma} - \textup{var}_{\infty}[\gamma]\hat{u} + \mathcal{O}(\epsilon) = -(\lambda_u + \frac{\lambda_{\gamma}}{\epsilon})\overline{u\gamma} - \frac{\sigma_{\gamma}^2}{2\lambda_{\gamma}}\hat{u} + \mathcal{O}(\epsilon). \nonumber
\end{align}
Multiplying by $\epsilon$ we have
\[ \epsilon \frac{d}{dt} \overline{u\gamma} =  -(\lambda_u \epsilon + \lambda_{\gamma})\overline{u\gamma} - \epsilon\frac{\sigma_{\gamma}^2}{2\lambda_{\gamma}}\hat{u} + \mathcal{O}(\epsilon^2), \]
which has solution $\overline{u\gamma} = e^{-(\lambda_u+\lambda_{\gamma}/\epsilon)t}\overline{u\gamma}_0 - \frac{\epsilon\sigma_{\gamma}^2}{2\lambda_{\gamma}(\lambda_u\epsilon +\lambda_{\gamma})}\hat{u}(1-e^{-(\lambda_u+\lambda_{\gamma}/\epsilon)t}) + \mathcal{O}(\epsilon^2)$.  In the limit as $t\to\infty$ the correlation approaches a steady state $\overline{u\gamma}_{\infty} =  - \frac{\epsilon\sigma_{\gamma}^2}{2\lambda_{\gamma}(\lambda_u\epsilon +\lambda_{\gamma})}\hat{u} + \mathcal{O}(\epsilon^2)$.  Applying this result to \eqref{appmeanEq2} gives the following evolution for the mean state estimate
\begin{align} d{\hat{u}} = -\left(\lambda_{u} -  \frac{\epsilon\sigma_{\gamma}^2}{2\lambda_{\gamma}(\lambda_u\epsilon +\lambda_{\gamma})} \right)\hat{u}\, dt + \hat{S}R^{-1}dw_{\hat{u}} + \mathcal{O}(\epsilon^2). \label{appfirstmoment}\end{align}
By Ito's lemma we have
\[ d(\hat{u}^2) = \left(-2\left(\lambda_u -  \frac{\epsilon\sigma_{\gamma}^2}{2\lambda_{\gamma}(\lambda_u\epsilon +\lambda_{\gamma})} \right)\hat{u}^2 + \hat{S}R^{-1}\hat{S} \right) dt+ 2\hat{u}\hat{S}R^{-1}dw_{\hat{u}}. \]
Following the same procedure for the second moment we have
\begin{align}
d \hat{S} &= \int u^2 dp\, d\vec{u} - d(\hat{u}^2), \nonumber \\
&= \int u^2 \mathcal{L}^*p \,d\vec{u}+ \int u^2p(u-\hat{u})R^{-1}d w \,d\vec{u} + 2\left(\lambda_u -  \frac{\epsilon\sigma_{\gamma}^2}{2\lambda_{\gamma}(\lambda_u\epsilon +\lambda_{\gamma})} \right)\hat{u}^2 - \hat{S}R^{-1}\hat{S} - 2\hat{u}\hat{S}R^{-1}dw_{\hat{u}}. \nonumber
\end{align}
A straightforward computation shows that $\int u^2p(u-\hat{u})R^{-1}d w_{\hat{u}} d\vec{u} = \int (u-\hat u)^3 p R^{-1} dw_{\hat{u}} d\vec{u} + 2\hat{u}\hat{S}R^{-1}\,dw_{\hat{u}}$, so {\bf assuming the $p$ has zero skewness}, we have
\begin{align}\label{appSEq} 
\frac{d}{dt} \hat{S} &=  \int u^2 \mathcal{L}^*p d\vec{u} + 2\left(\lambda_u -  \frac{\epsilon\sigma_{\gamma}^2}{2\lambda_{\gamma}(\lambda_u\epsilon +\lambda_{\gamma})} \right)\hat{u}^2 - \hat{S}R^{-1}\hat{S}, \nonumber \\
&= \int u^2 \frac{\partial}{\partial u}((\gamma +\lambda_u)up + bp)\,d\vec{u} + \sigma_u^2 + 2\left(\lambda_u -  \frac{\epsilon\sigma_{\gamma}^2}{2\lambda_{\gamma}(\lambda_u\epsilon +\lambda_{\gamma})} \right)\hat{u}^2 - \hat{S}R^{-1}\hat{S}, \nonumber \\
&= -2 \int\big( (\gamma +\lambda_u)u^2 p - ub\big) \, d\vec{u} + \sigma_u^2 + 2\left(\lambda_u -  \frac{\epsilon\sigma_{\gamma}^2}{2\lambda_{\gamma}(\lambda_u\epsilon +\lambda_{\gamma})} \right)\hat{u}^2 - \hat{S}R^{-1}\hat{S}, \nonumber \\
&= -2\lambda_u \hat{S}   -2 \overline{u^2\gamma} + 2\overline{ub}  -  \frac{\epsilon\sigma_{\gamma}^2}{\lambda_{\gamma}(\lambda_u\epsilon +\lambda_{\gamma})}\hat{u}^2 + \sigma_u^2 - \hat{S}R^{-1}\hat{S}. 
\end{align}
Simplifying this expression requires finding $\overline{ub} = \int bu p\,d\vec{u}$ and $\overline{u^2\gamma} = \int u^2\gamma p \, d\vec{u}$.  First $\overline{ub}$ has evolution
\[ d\,\overline{ub} = -(\lambda_u \epsilon + \lambda_b)\overline{ub} + \epsilon\frac{\sigma_b^2}{2\lambda_b} + \mathcal{O}(\epsilon^2), \]
the solution of which approaches $\overline{ub} \to \overline{ub}_{\infty} = \frac{\epsilon\sigma_b^2}{2\lambda_b(\lambda_b + \lambda_u\epsilon)} + \mathcal{O}(\epsilon^2)$ as $t \to \infty$.  Second $\overline{u^2\gamma}$ has the following evolution
\begin{align} \frac{d}{dt}\overline{u^2\gamma} &= -(2\lambda_u + \frac{\lambda_{\gamma}}{\epsilon})\overline{u^2\gamma} - 2 \int u^2\gamma^2 p \,d\vec{u}, \nonumber \\
&= -(2\lambda_u + \frac{\lambda_{\gamma}}{\epsilon})\overline{u^2\gamma} -  \frac{ \sigma_{\gamma}^2}{\lambda_{\gamma}} \int u^2 \tilde{p} \,du + \mathcal{O}(\epsilon). 
\end{align}
Multiplying this expression by $\epsilon$ we find the steady state solution $\overline{u^2\gamma}_{\infty} = -\frac{\epsilon\sigma_{\gamma}^2}{\lambda_{\gamma}(2\lambda_u \epsilon + \lambda_{\gamma})}\int u^2 \tilde{p} du  + \mathcal{O}(\epsilon^2)$.  Substituting the expressions for $\overline{ub}$, $\overline{u\gamma}_{\infty}$, and $\overline{u^2\gamma}_{\infty}$ into \eqref{appSEq}, we find,
\begin{align}\label{appSEq2} \frac{d}{dt} \hat{S} &= -2\lambda_u \hat{S} +\sigma_u^2 +\frac{\epsilon\sigma_b^2}{\lambda_b(\lambda_b + \lambda_u\epsilon)}   - \hat{S}R^{-1}\hat{S} - \frac{\epsilon\sigma_{\gamma}^2}{\lambda_{\gamma}(\lambda_u\epsilon +\lambda_{\gamma})}\hat{u}^2 + 2\frac{\epsilon\sigma_{\gamma}^2}{ \lambda_{\gamma}(2\lambda_u \epsilon + \lambda_{\gamma})}\int u^2 \tilde{p} du  + \mathcal{O}(\epsilon^2), \nonumber \\
&= -2\left(\lambda_u - \frac{\epsilon\sigma_{\gamma}^2}{\lambda_{\gamma}(\lambda_u\epsilon +\lambda_{\gamma})} \right) \hat{S} + \frac{\epsilon\sigma_{\gamma}^2}{\lambda_{\gamma}(\lambda_u\epsilon +\lambda_{\gamma})}\hat{u}^2 + \sigma_u^2 + \frac{\epsilon\sigma_b^2}{\lambda_b(\lambda_b + \lambda_u\epsilon)}  - \hat{S}R^{-1}\hat{S} + \mathcal{O}(\epsilon^2).
\end{align}

Equations \eqref{appfirstmoment} and \eqref{appSEq2} describe the dynamics of the posterior mean and covariance estimates of the slow variable $u$ from \eqref{appSPEKF} up to order-$\epsilon^2$, assuming that the skewness is zero. We refer to the solutions of \eqref{appfirstmoment} and \eqref{appSEq2} as the {\bf continuous-time SPEKF solutions} for variable $u$. We do not find all of the cross-correlation statistics between the variables $(u,b,\gamma)$ since our goal is to find a one-dimensional reduced stochastic filter model for the $u$ variable. Motivated by the results in \cite{gh:13}, we propose the following reduced filter to approximate the filtering problem in \eqref{appSPEKF},
\begin{align}
\label{appreducedSPEKF} dU &= -\alpha U dt + \beta U \circ dW_\gamma + \sigma_1 dW_u + \sigma_2 dW_b,\nonumber \\ &= -\left(\alpha-\frac{\beta^2}{2}\right)U dt + \beta U dW_\gamma + \sigma_1 dW_u + \sigma_2 dW_b, \\
dz &= U\,dt + \sqrt{R}\,dV.\nonumber
\end{align}
Our goal is to write the evolution for the first two moments of the corresponding conditional distribution $\pi(U,t \,| \, z_\tau,0\leq\tau\leq t)$ for \eqref{appreducedSPEKF} and match coefficients with \eqref{appfirstmoment} and \eqref{appSEq2}.  The Fokker-Planck operator for \eqref{appreducedSPEKF} is $\mathcal{L}^*\pi = -\frac{d}{dU}(-\alpha U \pi) + \frac{1}{2}\frac{d^2}{dU^2}(\beta^2 U^2 \pi) + \frac{1}{2}(\sigma_1^2+\sigma_2^2)\frac{d^2\pi}{dU^2}$.  By differentiating the first moment $\tilde{u} = \int U\pi \, dU$ and substituting the Kushner equation we have,
\begin{align} 
d{\tilde{u}} &= -\left(\alpha-\frac{\beta^2}{2}\right)\tilde{u}\,dt +  \int U\pi(U-\tilde{u}) R^{-1}d w_{\tilde{u}} \, dU,\nonumber\\ &= -\left(\alpha-\frac{\beta^2}{2}\right)\tilde{u}\,dt + \tilde SR^{-1}dw_{\tilde{u}}, \label{appreducedmeanspekf}
\end{align}
where $dw_{\tilde{u}} = dz - \tilde{u}\,dt$ is the innovation process.  By Ito's formula, we have,
\[ \frac{d}{dt}(\tilde{u}^2) = -2\left(\alpha - \frac{\beta^2}{2}\right)\tilde{u}^2 + \tilde SR^{-1}\tilde S + 2\tilde{u} \tilde SR^{-1}dw_{\tilde{u}}. \]
For the second moment, $\tilde S = \int U^2 \pi \, dU - \tilde{u}^2$, we have,
\begin{align}\label{appUcov} \frac{d}{dt}\tilde S &= \int U^2 \left( \frac{d}{dU}\Big(\Big(\alpha-\frac{\beta^2}{2}\Big) U \pi \Big) + \frac{1}{2}\frac{d^2}{dU^2}(\beta^2 U^2 \pi) + \frac{\sigma_1^2+\sigma_2^2}{2}\frac{d^2\pi}{dU^2} \right) dU + 2\Big(\alpha-\frac{\beta^2}{2}\Big)\tilde{u}^2 - \tilde SR^{-1}\tilde S, \nonumber \\
 &= -\int 2U^2\Big(\alpha-\frac{\beta^2}{2}\Big) \pi \, dU + \int (\beta^2 U^2\pi + (\sigma_1^2+\sigma_2^2) \pi) dU + 2\Big(\alpha-\frac{\beta^2}{2}\Big)\tilde{u}^2 - \tilde SR^{-1}\tilde S, \nonumber \\
 &= -2\alpha \int U^2 \pi \, dU + \beta^2 \int U^2\pi \, dU + \beta^2 \int U^2\pi \, dU + \sigma_1^2+\sigma_2^2 + 2\Big(\alpha-\frac{\beta^2}{2}\Big)\tilde{u}^2 - \tilde SR^{-1}\tilde S, \nonumber \\
  &= -2(\alpha-\beta^2) \tilde S + \beta^2  \tilde{u}^2 + \sigma_1^2+\sigma_2^2 - \tilde SR^{-1}\tilde S, 
\end{align}
assuming the third moments are zero.
We can specify the coefficients in \eqref{appreducedSPEKF} by matching the two mean estimates in \eqref{appfirstmoment} and \eqref{appreducedmeanspekf} and the two covariance estimates, \eqref{appUcov} and \eqref{appSEq2}, in particular,
\begin{align}\label{appparams}
\alpha &= \lambda_u, \quad \sigma_1^2 = \sigma_u^2,\nonumber \\
\sigma_2^2 &= \frac{\epsilon\sigma_b^2}{\lambda_b(\lambda_b + \epsilon\lambda_u)}, \quad\beta^2 =  \frac{\epsilon\sigma_{\gamma}^2}{\lambda_{\gamma}(\lambda_u\epsilon +\lambda_{\gamma})}.
\end{align}

We refer to the reduced stochastic filter in \eqref{appreducedSPEKF}, with posterior mean and covariance estimates given by \eqref{appreducedmeanspekf} and \eqref{appUcov} with parameters \eqref{appparams}, as the {\bf continuous-time reduced SPEKF}. Notice that this reduced filter applies a Gaussian closure on the posterior solutions by truncating the third-order moments. This is different than the Gaussian Closure Filter (GCF) introduced in \cite{bgm:12}, which applies a Gaussian closure on the prior dynamics before using a Kalman update to obtain posterior solutions. 

Assume that the parameters in SPEKF and in the reduced SPEKF yield stable filtered solutions, that is, there are constants such that the posterior mean and covariance statistics are uniformly bounded. Then for identical initial statistics, $\hat{u}(0)=\tilde{u(0)}$ and $\hat{S}(0)=\tilde{S}(0)>0$, with the same argument as in the \cite{fz:11}, we can show that,
\begin{align}
|\hat{S}(t)-\tilde{S}(t)| &\leq C_1\epsilon,\nonumber\\
\mathbb{E}\left(|\hat{u}(t)-\tilde{u}(t)|^2\right)&\leq C_2 \epsilon^2.\nonumber
\end{align}
Notice that despite the formal asymptotic expansion of the filtered statistics up to order-$\epsilon^2$, we cannot obtain the same uniform bounds on the errors $|\hat{S}(t)-\tilde{S}(t)|$ and $|\hat x - \tilde x|$. This is because the covariance estimates no longer converge to a constant steady state due to the term $\beta^2\tilde u^2$ in \eqref{appUcov} which results from the multiplicative noise in \eqref{appreducedSPEKF}.  This also implies that we no longer have uniform bounds on the covariances without assuming that both filters are stable.

Finally, we show that the reduced SPEKF is consistent. Consider the actual error $e = u - \tilde u$, with evolution given by,
\[ de = du - d\tilde u = -\lambda_u e + \sigma_u dW_u - \tilde S R^{-1}dw_{\tilde{u}} + \mathcal{O}(\epsilon). \]   
The evolution of the actual error covariance, $E =\mathbb{E}[e^2]$, is given by the Lyapunov equation,
\begin{align} 
\frac{dE}{dt} &=  -2\lambda_uE + \sigma_u^2 - \tilde S R^{-1} \tilde S + \mathcal{O}(\epsilon), \nonumber
\end{align}
which implies that the difference between the actual error covariance and the filtered covariance estimate evolves according to,
\[\frac{d}{dt}(E-\tilde S) = -2 \lambda_u (E- \tilde S) +\mathcal{O}(\epsilon), 
\]
and by Gronwall's lemma, we have $|E-\tilde{S}| \leq \mathcal{O}(\epsilon)$, when $\lambda_u>0$. We summarize these results in the following theorem.

\begin{customthm}{4.1}\label{appthm4}
Let $\lambda_u > 0$, and let $z$ be noisy observations of the state variable $u$ which solves the full model in \eqref{appSPEKF}.  Given identical initial statistics, $\tilde{u}(0)=\hat{u}(0)$ and $\tilde{S}(0) = \hat S(0)>0$, the mean and covariance estimates of a stable continuous-time reduced SPEKF in \eqref{appreducedSPEKF} with parameters \eqref{appparams} agree with mean and covariance of a stable continuous-time SPEKF for variable $u$ in the following sense. There exist time-independent constants, $C_1, C_2$, such that,
\begin{align}
|\hat{S}(t)-\tilde{S}(t)| &\leq C_1\epsilon,\nonumber\\
\mathbb{E}\left[|\hat{u}(t)-\tilde{u}(t)|^2 \right]&\leq C_2 \epsilon^2.\nonumber
\end{align}
Furthermore, the reduced filtered solutions are also consistent, up to order-$\epsilon$.
\end{customthm}
We remark that if we did not assume that the filtered solutions are stable, then the constants $C_1,C_2$ may be time-dependent.


\appendix{A metric to determine the consistency of covariance estimates}\label{appconsistencySection}


In practical application with nonlinear systems, we often measure the accuracy of the filter mean estimate by computing the mean squared error, $\tilde{E} \equiv \langle (x-\tilde x)^2\rangle$, which is a temporal average of the square difference of a single realization of the signal $x(t)$ and the estimate $\tilde x(t)$ produced by a filter. For practical consideration, we would like to have a measure for the covariance estimate, analogous to $\tilde{E}$, that can be computed by temporal averaging over a single realization of the filtered solutions. This is not a trivial problem when the optimal filter mean, $\hat x(t)$, and covariance, $\hat S(t)$, are not available. Notice that for any fixed time $t$, the mean $\hat x(t)$ and covariance $\hat{S}(t)$ estimates of the optimal filter satisfy $\hat{S}(t) = \mathbb{E}[(x(t)-\hat x(t))(x(t)-\hat x(t))^\top]$, where the expectation is with respect to the solutions of the Kushner equation, $p(x(t)\,|\, z(\tau),0\leq \tau\leq t)$. Thus if $x$ is $n$-dimensional, at each $t$ we have
\[ \mathbb{E}[(x(t)-\hat x(t))^\top \hat{S}^{-1}(x(t)-\hat x(t))] = \textup{Trace}\left(\hat S(t)^{-1/2}\mathbb{E}[(x(t)-\hat x(t))(x(t)-\hat x(t))^\top]\hat S(t)^{-1/2}\right) = n. \]
In general, given any filtered statistical estimates, $\tilde x(t)$ and $\tilde S(t)$, we can define a norm on the state estimate error as follows,
\[ \|x(t)-\tilde x(t)\|^2_{\tilde S(t)} \equiv  \frac{1}{n}(x(t)-\tilde x(t))^\top \tilde S(t)^{-1} (x(t)-\tilde x(t)),\]
where $n$ is the dimension of the state vector $x$.  Assuming that the posterior distribution has an ergodic invariant measure, we have
\[ 1 = \mathbb{E}[\|x-\hat x\|^2_{\hat S}] = \langle \|x-\hat x\|^2_{\hat{S}}\rangle,\]
where $\langle \cdot\rangle$ denotes temporal average. Motivated by this property, we propose the following metric to check whether the filter covariance solutions are over- or under-estimating the actual error covariance.

\begin{defin}{Consistency (of Covariance).}
Let $\tilde x(t)$ and $\tilde S(t)$ be a realization of the solution to a filtering problem for which the true signal of the realization is $x(t)$.  The \emph{consistency} of the realization is defined to be, $\mathcal{C}(x, \tilde x, \tilde S)$ where
\begin{align}\label{appconsistency} \mathcal{C}(x,\tilde x,\tilde S) =  \langle\|x-\tilde x||^2_{\tilde S}\rangle.\end{align}
We say that a filter is \emph{consistent} if $\mathcal{C} = 1$ almost surely (independent of the realization). The filter covariance under(over)estimates the actual error covariance when $\mathcal{C}>1$ ($\mathcal{C}<1$).
\end{defin}

This metric is nothing else but the signal part of the relative entropy measure of two Gaussian distributions \cite{bm:14}. With this definition, it is obvious that the true filter is consistent. However it is not the only consistent filter and not every consistent filter is accurate. For example, in the case of fully observed filtering problems, the observation sequence itself is a trivial filtered solution which is consistent if we take the covariance estimate to be the covariance of the observation noise.  Second, assuming the dynamical system is ergodic, we can define a consistent filter based purely on the equilibrium statistics by using a constant prediction $\tilde x = \mathbb{E}[x(t)]$ and covariance $\tilde S = \mathbb{E}[(x(t)-\tilde x(t))(x(t)-\tilde x(t))^\top]$.  These two examples are the trivial extremes of filtering, the first simply takes the observations as solutions, while the second completely ignores the observations.  However, while these examples are both consistent, neither is doing a good job of estimating the state compared to the true filter.
Therefore, this consistency measure is only a necessary condition for the covariance to be meaningful. It should be used together with the mean squared error measure. We should point out that although this measure is much weaker than the pattern correlation measure advocated in \cite{bm:14}, many suboptimal filters are not even consistent in the sense of Definition 3.1 as  shown in many examples in Sections~3 and 4 in the manuscript.

However, this measure has the following nice property: a consistent filter which produces posterior mean estimates close to the true posterior mean estimates also has a covariance close to the true posterior covariance, which we will verify now. As above, let $\hat x(t)$ and $\hat S(t)$ be the posterior statistical estimates of the true filter and $\tilde x(t)$ and $\tilde S(t)$ be the estimates from a suboptimal filter. For convenience we will drop the letter $t$ when no confusion is possible.  Then, assuming $ \mathcal{C}(x,\tilde x,\tilde S)$ exists, we can write
\begin{align} \mathcal{C}(x,\tilde x,\tilde S) &= \frac{1}{n}\langle (x-\hat x+\hat x-\tilde x)^\top \tilde S^{-1} (x-\hat x+\hat x-\tilde x)\rangle, \nonumber \\
&= \langle\|x-\hat x\|^2_{\tilde S}\rangle + \langle\|\tilde x-\hat x\|^2_{\tilde S}\rangle + \frac{2}{n}\langle (x-\hat x)^\top \tilde S^{-1} (\hat x-\tilde x)\rangle, \nonumber \\
&= \mathcal{C}(x,\hat x, \tilde S) +  \langle\|\tilde x-\hat x\|^2_{\tilde S}\rangle + \frac{2}{n}\langle (x-\hat x)^\top \tilde S^{-1} (\hat x-\tilde x)\rangle \label{appcc}. 
\end{align}
Note also that,
\begin{align} \left| \mathcal{C}(x,\hat x,\hat S) - \mathcal{C}(x,\hat x, \tilde S) \right| &\leq \left| \mathcal{C}(x,\hat x,\hat S) - \mathcal{C}(x,\tilde x, \tilde S) \right| + \left| \mathcal{C}(x,\tilde x, \tilde S) - \mathcal{C}(x,\hat x, \tilde S) \right|, \nonumber \\ &= \left|1 - \mathcal{C}(x,\tilde x, \tilde S) \right| + \langle\|\tilde x-\hat x\|^2_{\tilde S}\rangle + \frac{2}{n}\langle (x-\hat x)^\top \tilde S^{-1} (\hat x-\tilde x)\rangle,\nonumber \\
&\leq  \left|1 - \mathcal{C}(x,\tilde x, \tilde S) \right| + c \, \langle\|\tilde x-\hat x\|^2\rangle. \label{apppptyconsistent}
\end{align}
where the first line is due to the standard triangle inequality, the equality in the second line is based on the fact that the true filter is consistent, $\mathcal{C}(x,\hat x,\hat S)=1$ and the algebraic expression in \eqref{appcc}, and the inequality in the third line is due to Cauchy-Schwarz inequality and the constant $c$ depends on the smallest eigenvalue of $\tilde{S}$ and $\langle \|x-\hat{x}\|^2\rangle$. 

The inequality in \eqref{apppptyconsistent} suggests that if the state estimate $\tilde x$ is close to the true posterior estimate $\hat x$ and the consistency measure is close to one, then the covariance estimate $\tilde S$ is close to the true posterior covariance $\hat S$ in the sense that 
\[ \left| \mathcal{C}(x,\hat x,\tilde S) - \mathcal{C}(x,\hat x, \hat S) \right| = \frac{1}{n}\langle (x-\hat x)^T (\tilde S^{-1} - \hat S^{-1} ) (x-\hat x) \rangle, \]
is small. Thus, a consistent filter with a good estimate of the posterior mean has a good estimate of the posterior covariance.  In practice, many approximate filter mean estimates are quite accurate, in the sense that they are close to the true posterior estimate \cite{ls:12}. Therefore the consistency measure on the approximate filter solutions, $\mathcal{C}(x,\tilde x, \tilde S)$, is relevant for quantifying the skill of $\tilde S$, when the true filter covariance estimate $\hat S$ is not available.


\appendix{Online stochastic parameter estimation}\label{appparamestimation}

In this appendix we overview the method of \cite{bs:13} for fitting an additive noise covariance matrix as part of the filtering algorithm. Let $\vec{x}_k = (x_1(t_k),\ldots,x_N(t_k))^\top\in\mathbb{R}^N$ be the slow components of the solutions of the two-layer Lorenz-96 model in (4.1), given initial conditions $\vec{x}_0$. The goal of the online stochastic parameterization method in Section~4 is to determine parameters $\alpha,  Q=\sigma \sigma^{\top},  R$, of the following reduced stochastic filtering problem,
\begin{align}\label{applor96reduced}
\frac{dx_i}{dt} &= x_{i-1}(x_{i+1}-x_{i-2}) - x_i + F + \Big(- \alpha x_i + \sum_{j=1}^N \sigma_{ij} dw_j\Big),\quad i=1,\ldots, N, \\
\vec{z}_k&=h(\vec{x}_k) + \vec{\xi}_k, \quad\vec{\xi}_k \sim \mathcal{N}(0, R),  \nonumber
\end{align}
where the noisy observations $\vec{z}_n\in\mathbb{R}^M$ are defined at discrete time $t_k$ with time interval, $t_{k+1}-t_k = \Delta t$ and $M \leq N$. In our implementation, we set the observation time interval to be a multiple of the integration time, $\delta t=0.005$, and the Runge-Kutta scheme is simply iterated to solve the deterministic part of the model in \eqref{applor96reduced}. We will call $Q_k$ and $R_k$ the estimates of $ Q$ and $ R$ respectively and we initialized $Q_1=0$ and $R_1 =  R$. The covariance matrices $Q_k$ and $R_k$ are used in the ensemble transform Kalman filter procedure to inflate the ensemble and define the Kalman gain matrix respectively.  

The online parameterization algorithm consists of two parts, where we first apply any choice of ensemble Kalman filter algorithm to update the joint state-parameter estimate for $(\vec{x},\alpha)$, assuming persistent model for the parameter, $d\alpha/dt=0$. Subsequently, we use the innovation, $\epsilon_k = \vec{z}_k-h(\bar{x}^f_k)$ given by the difference between the observations and the mean prior estimate, $h(\bar{x}^f_k)$, projected in the observation subspace, to update the estimates $Q_k$ and $R_k$ of the noise covariances $Q$ and $R$ using the procedure below.  

The procedure uses linearizations of the dynamics and the observation function which are computed from the various ensembles used by the EnKF.  Explicitly, let $x_{k-1}^{a,i} \sim \mathcal{N}(\overline{x}_{k-1}^a,P_{k-1}^a)$ be the analysis ensemble at step $k-1$, where the index $i = 1,...,E$ indicates the ensemble member, and let $x_k^{f,i} = \mathcal{F}(x_{k-1}^{a,i})$ be the forecast ensemble which results from integrating the deterministic part of the model \eqref{applor96reduced} from $t_{k-1}$ to $t_{k}$ with initial condition $x_{k-1}^{a,i}$.  Then, letting $\overline{x}_k^f = \frac{1}{E}\sum_{i=1}^E x_{k}^{f,i}$, we define the matrix of analysis perturbations, $X_{k-1}^a$, and the matrix of forecast perturbations, $X_{k}^f$, by,
\begin{align} X_{k-1}^a &= \left[(x_{k-1}^{a,1}-\overline{x}_{k-1}^a)^\top,...,(x_{k-1}^{a,E}-\overline{x}_{k-1}^a)^\top \right] \nonumber \\
X_{k}^f &= \left[(x_{k}^{f,1}-\overline{x}_{k}^f)^\top,...,(x_{k}^{f,E}-\overline{x}_{k}^f)^\top \right].
\end{align}
Then we can define $F_k = X_k^f (X_{k-1}^a)^{\dagger}$, where $\dagger$ denotes the matrix pseudo-inverse, which we can think of as being a local linearization of the deterministic forward operator $\mathcal{F}$.  Similarly, let $\tilde x_{k}^{f,i} \sim \mathcal{N}(\overline{x}_{k}^f,P_{k}^f + Q)$ be the inflated forecast ensemble and let $z_k^{f,i} = h(\tilde x_{k}^{f,i})$ be the projection of this ensemble into the observation space.  Then we can define $H_k = Z_k^f(\tilde X_k^f)^{\dagger}$, which we think of as a local linearization of the observation function $h$, where, 
\begin{align} \tilde X_k^f &=  \left[(\tilde x_{k}^{f,1}-\overline{x}_{k}^f)^\top,...,(\tilde x_{k}^{f,E}-\overline{x}_{k}^f)^\top \right] \nonumber \\
Z_k^f &=  \left[(z_{k}^{f,1}-\overline{z}_{k}^f)^\top,...,(z_{k}^{f,E}-\overline{z}_{k}^f)^\top \right]
\end{align}
are the matrix of inflated forecast perturbations the matrix of observed forecast perturbations respectively, and where $\overline{z}_k^f = \frac{1}{E}\sum_{i=1}^E z_k^{f,i}$.

\begin{enumerate} 
\item Produce empirical estimates $Q^e_{k-1}$ and $R^e_{k-1}$ of $Q$ and $R$ based on the innovations at time $k$ and $k-1$ using the formula of \cite{bs:13},
\begin{align}\label{appqrestimates}
P_{k-1}^e &= F_{k-1}^{-1}H_k^{-1}\epsilon_k\epsilon_{k-1}^\top H_{k-1}^{-\top} + K_{k-1}\epsilon_{k-1}\epsilon_{k-1}^\top H_{k-1}^{-\top}  \nonumber \\
Q_{k-1}^e &=  P_{k-1}^e - F_{k-2}P_{k-2}^aF_{k-2}^\top \nonumber \\
R_{k-1}^e &= \epsilon_{k-1}\epsilon_{k-1}^\top - H_{k-1}P_{k-1}^fH_{k-1}^\top
\end{align}
where $F_k$ and $H_{k-1},H_k$ are linearizations of the dynamics and observation function estimated from the ensembles as described above.  It was shown in \cite{bs:13} that $P^e_{k-1}$ is an empirical estimate of the background covariance at the previous step which motivates the index $k-1$.  Notice that this procedure requires us to save the linearizations $F_{k-2},F_{k-1}, H_{k-1}, H_k$, innovations $\epsilon_{k-1}, \epsilon_k$, and the analysis $P_{k-2}^a$ and Kalman gain matrix, $K_{k-1}$, from the $k-1$ and $k-2$ steps of the EnKF.
\item The estimates $Q^e_{k-1}$ and $R^e_{k-1}$ are low-rank, noisy estimates of the parameters $ Q$ and $ R$ which will make the posterior estimate statistics from the filter consistent with the empirical statistics in the sense of Equation \eqref{appqrestimates}.  In order to form stable full-rank estimates of $ Q$ and $ R$ we assimilate these estimates using an exponential moving average with window $\tau$,
\begin{align}\label{appqrupdate}
Q_{k} &= Q_{k-1}+ (Q^e_{k-1}- Q_{k-1})/\tau \nonumber \\
R_{k} &= R_{k-1} + (R^e_{k-1} - R_{k-1})/\tau. 
\end{align}
\end{enumerate}

We interpret the moving average in \eqref{appqrupdate} as a simplistic filter which gives stable estimates $Q_k$ and $R_k$ of $Q$ and $R$ from the noisy empirical estimate.  The stochastic nature of the estimate of $Q_k$ can lead to temporary excursions which are not symmetric and/or positive definite, which can lead to instability in the EnKF.  Thus, while the matrix $Q_k$ is not changed, the matrix used in the $k$-th step of the filter is a modified version of $Q_k$ which is forced to be symmetric and positive definite by taking $\tilde Q_k = (Q_k + Q_k^\top)/2$ and then taking the max of the eigenvalues of $\tilde Q_k$ with zero.  Again, we emphasize that $\tilde Q_k$ is only used in the $k$-th filter step and no ad-hoc corrections are made to the matrix $Q_k$ which eventually stabilizes at a symmetric and positive definite matrix naturally via the moving average in \eqref{appqrupdate}.  These ad-hoc corrections are only needed during the transient period of the estimation of $Q$, especially when we initialize $Q_1=0$.  Note that the work of \cite{hmm:14} provides an attractive alternative where a true secondary filter is applied to the estimates $Q^e$ and $R^e$, however this technique has not yet been developed for high-dimensional systems.

Notice that the number of parameters in $ Q$  is large ($N^2=81$ parameters in Section 4(a)) and accurate recovery requires long simulations.  Moreover, for sparse observations, $M< N$, there are observability problems when trying to estimate the full matrix $ Q$, in particular the matrices $H_k$ and $H_{k-1}$ are not invertible as required in \eqref{appqrestimates}. In order to reduce the required simulation time and the required dimension of the observations, we can use the parameterization scheme introduced in \cite{bs:13} to parameterize the $ Q$ matrix. Since the Lorenz96 system is spatially homogeneous, we introduce a cyclic parameterization of $ Q$.  If $ Q$ is $N\times N$ then set $\tilde{M}=\textup{ceil}(N/2)$ and write $ Q = \sum_{r=1}^{\tilde{M}} q_r \hat Q_r$ where $q_r$ are scalar parameters and 
\[ (\hat Q_r)_{ij} = \left\{ \begin{array}{ll} 1 & \rm{if} \hspace{5pt} i = j+r \hspace{5pt}\rm{or}\hspace{5pt} j = i+r \\ 0 & \rm{else} \end{array}\right. \]
where the sums $j+r$ and $i+r$ are cyclic so we implicitly subtract $N$ from any sum greater than the maximum index $N$.  Now following \cite{bs:13} we combine the first two equations of \eqref{appqrestimates} in order to remove the matrix inversions (which may be undefined for sparse observations) which gives us,
\[ H_kF_{k-1}Q_{k-1}^e H_{k-1}^\top = \epsilon_k\epsilon_{k-1}^\top + H_k F_{k-1}K_{k-1}\epsilon_{k-1}\epsilon_{k-1}^\top + H_kF_{k-1}F_{k-2}P_{k-2}^a F_{k-2}^\top H_{k-1}^\top \]
and we set $C_k$ equal to the right hand side, which is a $M \times M$ matrix that we can compute after the $k$-th filter step.  Now we introduce our parameterization $Q_{k-1}^e = \sum_{r=1}^{\tilde{M}} q_r \hat Q_r$ into the left hand side which gives us,
\begin{align}\label{apptovectorize} C_k = \sum_{r=1}^{\tilde{M}} q_r H_k F_{k-1} \hat Q_r H_{k-1}^\top. \end{align}
Since the terms $H_k F_{k-1} \hat Q_r H_{k-1}^\top$ can each be computed from the assumed form of $\hat Q_r$, we can vectorize \eqref{apptovectorize} by letting $\textup{vec}(C_k)$ be the $M^2 \times 1$ vector formed by concatenating the columns of $C_k$ and letting $A_k$ be the $M^2 \times \tilde{M}$ matrix where the $r$-th column is given by $\textup{vec}(H_k F_{k-1} \hat Q_r H_{k-1}^\top)$.  Letting $\vec{q} = (q_1,q_2,...,q_{\tilde{M}})^\top$ we can rewrite \eqref{apptovectorize} as $\textup{vec}(C_k) = A_k \vec q_k$ and we can now solve for $\vec q$ by least squares and then set $Q_{k-1}^e = \sum_{r=1}^{\tilde{M}} q_r \hat Q_r$.  Finally, we use \eqref{appqrupdate} to update $Q_k$ as usual.  Notice that we essentially need to invert the matrix $A_k$ which is $M^2 \times \tilde{M}$ so we typically want the number of parameters $\tilde{M}$ to be less than the square of the observation dimension, $M^2$.  For the cyclic parameterization, the number of parameters is $\tilde{M} = \textup{ceil}(N/2)$ so we need at least $\sqrt{N/2}$ observations for this parameterization to work well.  

In Section 4(b) we assume all $N=M=9$ of the slow variables are observed which allows us to estimate the full $9\times 9$ matrix $ Q$ which corresponds to estimating $N^2=M^2=81$ parameters using equations \eqref{appqrestimates}.  Because of the large number of parameters, we used a long averaging window of $\tau=5000$ and the filter was run for $80000$ observations with the RMSE and consistency averages only using the last $60000$ observations which gave the secondary filter $20000$ filter steps to converge to a stable estimate of $ Q$. 

In Section 4(c) we observe $M=4$ of the $N=8$ slow variables so we cannot estimate the full matrix $ Q$.  Thus we use the cyclic parameterization and since $M^2=16$ and we are only estimating $\tilde M = 4$ parameters this produced stable results.  Since the number of parameters $\tilde M$ was small we were able to use a smaller $\tau=1500$ and each filter was run for $10000$ observations and only the last $7000$ were used for RMSE averages, thereby allowing $3000$ filter steps for the estimate of $ Q$ to converge.



\bibliography{ref}
\bibliographystyle{plain}

\end{document}